\newtheorem{proposition}{Proposition}
\newtheorem{example}[proposition]{Example}
\newtheorem{lemma}[proposition]{Lemma}
\newtheorem{corollary}[proposition]{Corollary}
\newtheorem{theorem}[proposition]{Theorem}
\newcommand{\RR}{\mathbb{R}}
\newcommand{\Cmt}{{\mathcal C}}
\definecolor{mygray}{gray}{0.6}
\newcommand{\away}{\color{mygray}}
\def\mC{\mathcal{C}}
\def\cQ{\mathcal{Q}}
\definecolor{damjana}{rgb}{.8,.2,.2}
\def\RR{\mathds R}
\def\II{\mathds I}
\def\CC{\mathcal C}
\def\FF{\mathcal F}
\begin{document}

\title[Non-exchangeability vs. concordance]{Relation between non-exchangeability and measures of concordance of copulas}

\author[D. Kokol B.]{Damjana Kokol Bukov\v{s}ek}
\address{School of Economics and Business, University of Ljubljana, and Institute of Mathematics, Physics and Mechanics, Ljubljana, Slovenia}
\email{damjana.kokol.bukovsek@ef.uni-lj.si}

\author[T. Ko\v sir]{Toma\v{z} Ko\v{s}ir}
\address{Faculty of Mathematics and Physics, University of Ljubljana, and Institute of Mathematics, Physics and Mechanics, Ljubljana, Slovenia}
\email{tomaz.kosir@fmf.uni-lj.si}

\author[B. Moj\v skerc]{Bla\v{z} Moj\v{s}kerc}
\address{School of Economics and Business, University of Ljubljana, and Institute of Mathematics, Physics and Mechanics, Ljubljana, Slovenia}
\email{blaz.mojskerc@ef.uni-lj.si}

\author[M. Omladi\v c]{Matja\v{z} Omladi\v{c}}
\address{Institute of Mathematics, Physics and Mechanics, Ljubljana, Slovenia}
\email{matjaz@omladic.net}

\begin{abstract}
An investigation is presented of how a comprehensive choice of five most important measures of concordance (namely Spearman's rho, Kendall's tau, 
Gini's gamma, Blomqvist's beta, and {their weaker counterpart Spearman's footrule}) relate to non-exchangeability, i.e., asymmetry on copulas. Besides these results, the method proposed also seems to be new and may serve as a raw model for exploration of the relationship between a specific property of a copula and some of its measures of dependence structure, or perhaps the relationship between various measures of dependence structure themselves.{} 
\end{abstract}

\thanks{All four authors acknowledge financial support from the Slovenian Research Agency (research core funding No. P1-0222).\\ \date}
\keywords{Copula; dependence concepts; 
supremum and minimum of a set of copulas; asymmetry or non-exchangeability; measures of concordance}
\subjclass[2010]{Primary:     60E05; Secondary: 60E15, 62N05}

\maketitle

\section{Introduction }

Copulas are mathematical objects that capture the dependence structure among random variables. Since they were introduced by A.\ Sklar in 1959 \cite{Skla} they have gained a lot of popularity and applications in several fields, e.g., in finance, insurance and reliability theory. Through them we model the dependence between random variables by building (bivariate) distributions with given marginal distributions. When deciding about which copulas to apply in real life scenarios the practitioners need to compare how certain statistical concepts behave on their data and on a class of copulas they intend to exploit.

An important family of the kind of notions are measures of  concordance (cf.\ \cite{Nels,DuSe}) such as Kendall's tau and Spearman's rho. Perhaps even more important is the notion of symmetry, also called exchangeability, or the lack of it. These notions have been studied extensively and increasingly. Nevertheless, the main contribution of this paper, i.e., the investigation of the relation between the two, appears to be new. What we do here is study a comprehensive choice of five most important measures of concordance {(namely Spearman's rho, Kendall's tau, Gini's gamma,  Blomqvist's beta, and their weaker counterpart Spearman's footrule)} on a narrow class of copulas that are equally nonexchangeable, in other words, they are asymmetric in an equivalent way to be defined later.

Let us present here some 
notation needed in the paper. We denote by $\CC$ the set of all bivariate copulas and by $\II$ the interval $[0,1]\subseteq\RR$. For a copula $C\in\CC$ we denote throughout the paper by $C^t$ the transpose of $C$, i.e., $C^t(u,v)=C(v,u)$ for all $(u,v)\in\II^2$. We also denote by $C^{\sigma_1}$ and $C^{\sigma_2}$ the two reflections on a copula $C$ defined by $C^{\sigma_1}(u,v) =v-C(1-u,v)$ and $C^{\sigma_2}(u,v)=u-C(u,1-v)$ (see \cite[{\S}1.7.3]{DuSe}), and by $\widehat{C}$ the survival copula of $C$. 


{Several orders can be introduced on  $\CC$. Copula $C$ is preceding $D$ in the \emph{concordance order} if $C(u,v)\leqslant D(u,v)$ and $\widehat{C}(u,v)\leqslant \widehat{D}(u,v)$ for all $(u,v)\in\II^2$ {\cite[Definition 2.4]{Joe}. Copula $C$ is preceding $D$ in the \emph{pointwise order} if only $C(u,v)\leqslant D(u,v)$ for all $(u,v)\in\II^2$. (See \cite[Definition 2.8.1]{Nels} and \cite[\S{2.11}]{Joe} for further details.) The concordance order and the pointwise order coincide on the set of two-dimensional copulas. (See \cite[\S{2.2.1}]{Joe97} for a proof of this statement.) Hence, we will simply refer to them as the order, and write $C\leqslant D$ for $C,D\in\CC$ if $C(u,v)\leqslant D(u,v)$ for all $(u,v)\in\II^2$.
It is well known that $\CC$ is 
{a partially ordered set} with respect to the order{,  but not a lattice \cite[Theorem 2.1]{NeUbF2},} and that $W(u,v)=\max\{0,u+v-1\}$ and $M(u,v)=\min\{u,v\}$ are the lower and the upper bound of all copulas, respectively. Copulas $W$ and $M$ are called the \emph{Fr\'echet-Hoeffding lower and upper bound}, respectively.

Besides these global bounds one often studies local bounds of certain sets of copulas. Perhaps among the first known examples of the kind is given in Theorem 3.2.3 of Nelsen's book \cite{Nels} (cf.\ also \cite[Theorem 1]{NQRU}), where the bounds of the set of copulas $C\in\CC$ with $C(a,b)=\theta$ for fixed $a,b\in\II$ and $\theta\in[W(a,b),M(a,b)]$ are given. In general, if $\mathcal{C}_0$ is a set of copulas, we let
\begin{equation}\label{eq:inf:sup}
  \underline{C}=\inf\mathcal{C}_0\quad\quad\overline{C} =\sup\mathcal{C}_0.
\end{equation}
In \cite{NQRU} the authors study the bounds for the set of copulas whose Kendall's tau equals a given number $t\in (-1,1)$ and for the set of copulas whose Spearman's rho equals a given number $t\in (-1,1)$. In both cases the bounds are copulas which do not belong to the set. Similar bounds for the set of copulas having a fixed value of Blomqvist's beta were found in \cite{NeUbF}. In \cite{BBMNU14} the authors present the local bounds for the set of copulas having a fixed value of the degree of nonexchangeability. Observe that this amounts to the same as studying the set of copulas having a fixed value of the measure of asymmetry $\mu_{\infty}$ defined in Section 3 of this paper since the two measures of nonexchangeability are a multiple of each other {(by a factor of $3$)}. However, this set might be a bit too big if we want to see how the value of the measure of asymmetry relates to the value of a measure of concordance.

We are now in position to explain some of our main results. 
The authors of \cite{KoBuKoMoOm} introduce in \S2 following the ideas of \cite{KlMe} the so-called \emph{maximal asymmetry function}. This function determines maximal possible asymmetry $d^*(a,b)$ on the set of all copulas $\mathcal{C}$ at a given point $(a,b)$ of the unit square $\II^2$. Given a fixed $c\in[0,d^*(a,b)]$ we define $\mathcal{C}_0$ as the set of copulas for which the value of the maximal asymmetry function at $(a,b)$ equals $c$. We determine the local bounds of this set given by \eqref{eq:inf:sup}. For any $c$ of the kind we compute the possible range of a certain measure of concordance on this set. We believe that the so defined set is more appropriate to investigate the relationship between asymmetry and measures of concordance than the set proposed in \cite{BBMNU14}.

The paper is organized as follows. Section 2 contains preliminaries on 
asymmetry leading to Theorem \ref{th1}, one of our main results. 
{Section 3 contains preliminaries on measures of concordance. }  In Section 4 we present the elaboration of measures of concordance on 
{local bounds described in Theorem \ref{th1}}. 
This part of the paper is technically quite involved. The relations between the two notions are then studied in Sections 5 to 9 for Spearman's rho, Kendall's tau, Spearman's footrule, Gini's gamma, and Blomqvist's beta, respectively. So, among the main results of the paper we should point out Theorems \ref{th:rho}, \ref{th:tau}, \ref{th:phi}, \ref{th:gamma} and \ref{th:beta}, and Corollaries \ref{cor:rho}, \ref{cor:tau}, \ref{cor:phi}, \ref{cor:gamma} and \ref{cor:beta}, together with Figures \ref{fig-rho} to \ref{fig-beta}. Some concluding thoughts are given in Section 10.

\section{Maximally asymmetric copulas }\label{sec:max asym}

In this paper we study relations between various measures of concordance and a measure of non-exchangeability $\mu_\infty$. The topic of non-exchangeability or asymmetry of copulas is attracting much attention (see e.g. \cite{DeBDeMJw,DM,DP2,DP1,FeSaUbFl,GeNe,KlMe,KoBuKoMoOm,N}).
To quantify the non-exchangeability or asymmetry the authors in \cite{DKSU-F} introduced the notion of a measure of asymmetry. {We use their axioms in the following definition. Other sets of axioms might be also reasonable from applications point of view.}

A function $\mu: \mC\to [0,\infty)$ is {\em a measure of asymmetry (or a measure of non-exchangeability) for copula $C$} if it
satisfies the following properties:
\begin{description}
\item[(B1)] there exists $K\in [0,\infty)$ such that, for all $C\in\mC$ we have $\mu(C) \leqslant K$,
\item[(B2)] $\mu(C) = 0$ if and only if $C$ is symmetric, i.e. $C=C^t$,
\item[(B3)] $\mu(C) = \mu(C^t)$ for every $C\in\mC$,
\item[(B4)] $\mu(C) = \mu(\widehat{C})$ for every $C\in\mC$,
\item[(B5)] if $(C_n)_{n\in\mathbb{N}}$ and $C$ are in $\mC$, and if $(C_n)_{n\in\mathbb{N}}$ converges uniformly to $C$, then $(\mu(C_n))_{n\in\mathbb{N}}$ converges to $\mu(C)$.
\end{description}

A large class of measures of asymmetry is provided in \cite[Theorem 1]{DKSU-F}: Let $d_p$ be the classical $L_p$ distance in $\mC$ for $p \in [1,\infty]$. When $p =\infty$, we have
$$d_{\infty}(C, D) = \max_{u,v\in \II}\left|C(u,v)-D(u,v)\right|.$$
Then, the measure of asymmetry $\mu_{\infty}: \mC\to [0,\infty)$ is given by
$$\mu_{\infty}(C) = d_{\infty}(C,C^t ).$$
{Sometimes in the literature this measure is normalized to $3\mu_{\infty}$ (e.g. \cite{BBMNU14}).}

In \cite[\S 2]{KoBuKoMoOm}, the so-called \emph{maximal asymmetry function} is introduced; its value at a fixed point $(u, v)\in\II^2$ is given by
\[
    d^*_\FF (u,v) = \sup_{C\in\FF} \{|C(u,v)-C(v,u)|\},
\]
where $\FF\subseteq\CC$ is an arbitrary family of copulas. If $\FF=\CC$ this supremum is attained since $\CC$ is a compact set by \cite[Theorem 1.7.7]{DuSe}. Klement and Mesiar \cite{KlMe} 
showed that
\begin{equation}\label{eq:kle mes}
  d_\CC^*(u,v)=\min\{u,v,1-u,1-v,|v-u|\}.
\end{equation}
{Confer also \cite{N} where only $\leqslant$ inequality is established.}

Now, choose $(a,b)\in\II^2$.  Furthermore, choose a $c\in\II$ such that
\[
    0\leqslant c\leqslant d_\CC^*(a,b).
\]

According to \cite[Theorem 1.4.5]{DuSe} the set $\CC$ is convex, so it follows easily that there exits a copula $C\in\CC$ such that
\begin{equation}\label{asymmetry point}
  C(a,b)-C(b,a) = c.
\end{equation}
{
\begin{lemma}\label{lem1}
For any $c$ such that $0\leqslant c\leqslant\min\{a,b,1-a,1-b\}$ functions
  \begin{equation}\label{max asym copula left}
    \underline{C}^{(a,b)}_c(u,v)=\max\{W(u,v),\min\{d_1,u-a+d_1,v-b+d_1,u+v-a-b+d_1\}\},
  \end{equation}
  and
  \begin{equation}\label{max asym copula right}
    \overline{C}^{(a,b)}_{c}(u,v)=\min\{M(u,v),\max\{d_2,u-b+d_2,v-a+d_2,u+v-a-b+d_2\}\},
  \end{equation}
  are copulas. Here $W$ respectively $M$ are the Fr\'{e}chet Hoeffding lower bound respectively upper bound,
  \begin{equation}\label{d_1}
  d_1=d_{1,c}^{(a,b)}=W(a,b)+c
  \end{equation}
  and
  \begin{equation}\label{d_2}
  d_2=d_{2,c}^{(a,b)}=M(a,b)-c.
  \end{equation}
Furthermore, the following relations hold:
\begin{equation}\label{rel Csymm}
\overline{C}^{(a,b)}_{c}=(\overline{C}^{(b,a)}_{c})^t,\quad \underline{C}^{(a,b)}_{c}=(\underline{C}^{(b,a)}_{c})^t,
\end{equation}
\begin{equation}\label{rel Cabc}
\overline{C}^{(a,b)}_{c}=(\underline{C}^{(b,1-a)}_c)^{\sigma_1}=(\underline{C}^{(1-b,a)}_c)^{\sigma_2},
\end{equation}
and
\begin{equation}\label{rel d12}
d_{2,c}^{(a,b)}=a-d_{1,c}^{(1-b,a)}=b-d_{1,c}^{(b,1-a)}.
\end{equation}
\end{lemma}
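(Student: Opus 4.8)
The plan is to verify the claim in three stages: first that $\underline{C}^{(a,b)}_c$ and $\overline{C}^{(a,b)}_{c}$ are genuine copulas, then that they take the prescribed asymmetry value at $(a,b)$ (which motivates their construction, even though it is not literally part of the statement), and finally to check the symmetry/reflection identities \eqref{rel Csymm}--\eqref{rel d12}. For the copula property I would first treat $\overline{C}^{(a,b)}_{c}$. Write $g(u,v)=\max\{d_2,u-b+d_2,v-a+d_2,u+v-a-b+d_2\}$; this is a maximum of affine functions, hence supermodular and componentwise nondecreasing, but it is not itself a copula (wrong boundary behaviour), which is why it is capped by $M$. The key structural fact is that $g$ agrees with a shifted/truncated version of the patchwork construction: on the rectangle $[0,a]\times[0,b]$ it equals the constant $d_2$ adjusted, and outside it grows linearly. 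I would show $\min\{M,g\}$ is 2-increasing by the standard argument that the minimum of two supermodular functions that ``cross nicely'' is supermodular — more concretely, by checking the $C$-volume $V_C(R)\ge 0$ on an arbitrary rectangle $R=[u_1,u_2]\times[v_1,v_2]$, splitting into cases according to how $R$ meets the region $\{M\le g\}$ versus $\{M> g\}$. The boundary conditions $\overline{C}^{(a,b)}_{c}(u,0)=\overline{C}^{(a,b)}_{c}(0,v)=0$ and $\overline{C}^{(a,b)}_{c}(u,1)=u$, $\overline{C}^{(a,b)}_{c}(1,v)=v$ follow directly once one observes that on the boundary $g$ is dominated by $M$ (using $0\le c\le\min\{a,b,1-a,1-b\}$, which guarantees $d_2=M(a,b)-c\ge 0$ and $d_2\le\min\{a,b\}$, so the relevant affine pieces behave correctly). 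The case of $\underline{C}^{(a,b)}_c=\max\{W,\min\{\dots\}\}$ is handled dually: $\min$ of affine functions is submodular, and taking the max with $W$ restores 2-increasingness; alternatively, one can invoke the reflection identity \eqref{rel Cabc} to deduce it from the $\overline{C}$ case, since $\sigma_1,\sigma_2$ preserve the class of copulas \cite[\S1.7.3]{DuSe}.

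For the relations \eqref{rel Csymm}--\eqref{rel d12}, I expect these to be essentially bookkeeping. Identity \eqref{rel d12} is immediate from \eqref{d_1}--\eqref{d_2} together with $W(1-b,a)=\max\{0,-b+a\}$ and $M(a,b)$: one checks $a-d_{1,c}^{(1-b,a)}=a-W(1-b,a)-c=a-\max\{0,a-b\}-c=\min\{a,b\}-c=d_{2,c}^{(a,b)}$, and similarly for the other equality. Granting \eqref{rel d12}, the reflection identities \eqref{rel Cabc} reduce to a direct substitution: applying $\sigma_1$ to $\underline{C}^{(1-b,a)}_c(u,v)=\max\{W(u,v),\min\{d_1',u-(1-b)+d_1',v-a+d_1',u+v-(1-b)-a+d_1'\}\}$ (with $d_1'=d_{1,c}^{(1-b,a)}$) via $(C)^{\sigma_1}(u,v)=v-C(1-u,v)$, and simplifying each of the four affine pieces, should produce exactly the four pieces of $\overline{C}^{(a,b)}_{c}$ after using $d_1'=a-d_2$; the $\max$ with $W$ turns into a $\min$ with $M$ because $v-W(1-u,v)=\min\{u,v\}=M(u,v)$. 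The transpose identities \eqref{rel Csymm} are even simpler: swapping $(u,v)\mapsto(v,u)$ interchanges the roles of $a$ and $b$ throughout the defining formulas \eqref{max asym copula left}--\eqref{max asym copula right}, since $W,M$ are symmetric and $d_1,d_2$ depend on $(a,b)$ only through $W(a,b),M(a,b)$, which are symmetric.

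The main obstacle will be the 2-increasing verification for $\overline{C}^{(a,b)}_{c}$ (and dually $\underline{C}^{(a,b)}_c$): the function is a $\min$ of $M$ with a $\max$ of four affine maps, so a rectangle $R$ can straddle several of the ``active piece'' regions simultaneously, and a brute-force case split is combinatorially heavy. I would try to organize this cleanly by noting that $\max\{d_2,u-b+d_2,v-a+d_2,u+v-a-b+d_2\}=d_2+\max\{0,u-b\}+\max\{0,v-a\}$ is false in general but $\max\{0,u-b,v-a,u+v-a-b\}$ does simplify: since $u+v-a-b=(u-b)+(v-a)$ and the max of $\{0,x,y,x+y\}$ equals $\max\{0,x\}+\max\{0,y\}$, we get the clean form $g(u,v)=d_2+(u-b)_++(v-a)_+$ where $(\cdot)_+$ denotes positive part. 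This is manifestly a copula-like ``shifted independence-type'' surface and is supermodular with a product-of-marginals structure, so $\min\{M,g\}$ being 2-increasing follows from a short lemma about the minimum of $M$ with any 2-increasing function that lies below $M$ on the boundary. An analogous simplification $\min\{d_1,u-a+d_1,v-b+d_1,u+v-a-b+d_1\}=d_1-(a-u)_+-(b-v)_+$ handles $\underline{C}^{(a,b)}_c$. Once these normal forms are in hand, the remaining checks are routine and I would not belabour them.
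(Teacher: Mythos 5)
Your normal forms are correct and are the right way to see what these functions are: writing $x_+=\max\{x,0\}$, indeed $\overline{C}^{(a,b)}_{c}=\min\{M,\,d_2+(u-b)_+ +(v-a)_+\}$ and $\underline{C}^{(a,b)}_{c}=\max\{W,\,d_1-(a-u)_+-(b-v)_+\}$ (incidentally, the identity you dismiss as ``false in general'' is exactly the one you then derive). The genuine gap is the 2-increasingness step, which is the heart of the lemma and which you delegate to an unproven ``short lemma'' that the minimum of $M$ with a 2-increasing function lying below $M$ on the boundary is 2-increasing. As a general principle this fails: pointwise minima of 2-increasing functions, even of copulas, need not be 2-increasing ($\CC$ is not a lattice, cf.\ the Nelsen--\'Ubeda-Flores reference in the paper), and already for a bilinear supermodular $g$ one can produce a rectangle on which $\min\{M,g\}$ has negative volume; moreover your boundary hypothesis is stated backwards for the present application, since here $g\geqslant M$ on the boundary of $\II^2$ (that is precisely why $\min\{M,g\}$ has uniform margins — if $g$ were dominated by $M$ there, the margins would be wrong). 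The same objection applies to ``taking the max with $W$ restores 2-increasingness.'' So as written the copula property is not proved. The repair, however, is immediate from your own normal forms: they exhibit $\overline{C}^{(a,b)}_{c}$ and $\underline{C}^{(a,b)}_{c}$ as exactly the bounds $C_U$ (for the constraint $C(b,a)=d_2$) and $C_L$ (for $C(a,b)=d_1$) of Nelsen's Theorem 3.2.3, with $W(a,b)\leqslant d_1,d_2\leqslant M(a,b)$ guaranteed by $0\leqslant c\leqslant\min\{a,b,1-a,1-b\}$; citing that theorem is in fact the paper's entire proof of this part. (Alternatively one can check directly that these functions spread mass uniformly on four segments of slope $\pm1$, i.e.\ are shuffles of $M$, but that is the case analysis you were trying to avoid.)

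On the relations: your treatment of \eqref{rel Csymm} and \eqref{rel d12} is fine, and your substitution plan for \eqref{rel Cabc} is sound and more explicit than the paper's scatterplot argument — but be aware of what it proves. With the paper's conventions $C^{\sigma_1}(u,v)=v-C(1-u,v)$ and $C^{\sigma_2}(u,v)=u-C(u,1-v)$, one computes $(\underline{C}^{(p,q)}_c)^{\sigma_1}=\min\{M,\,(q-d_1)+(u-(1-p))_+ +(v-q)_+\}$, which gives $\overline{C}^{(a,b)}_{c}=(\underline{C}^{(1-b,a)}_c)^{\sigma_1}=(\underline{C}^{(b,1-a)}_c)^{\sigma_2}$, i.e.\ the two superscript pairs matched the opposite way from the printed display \eqref{rel Cabc}. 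A numerical check at $a=0.3$, $b=0.6$, $c=0.1$, $(u,v)=(0.6,0.3)$ shows the printed pairing fails there ($0.3\neq 0.2$) while the pairing above holds, and it is the latter that is consistent with the later use in Proposition \ref{prop3}. You do apply $\sigma_1$ to $\underline{C}^{(1-b,a)}_c$, which is the internally consistent choice, but then your conclusion does not literally match the statement you set out to prove; you should flag the discrepancy (it appears to be a misprint in \eqref{rel Cabc} rather than an error in your computation) rather than silently asserting the printed identities.
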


\begin{figure}[h]
            \includegraphics[width=5cm]{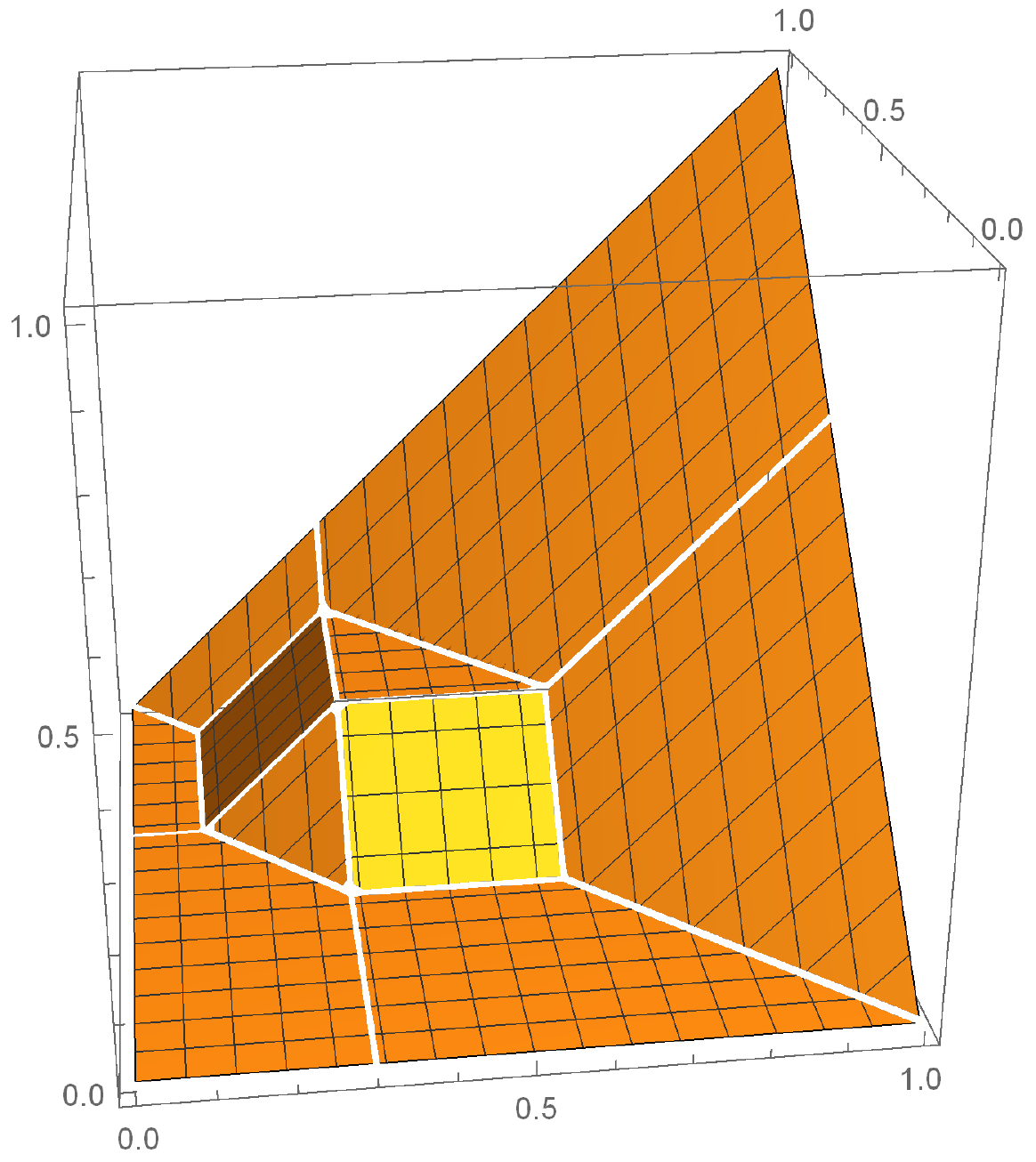} \hfil \includegraphics[width=5cm]{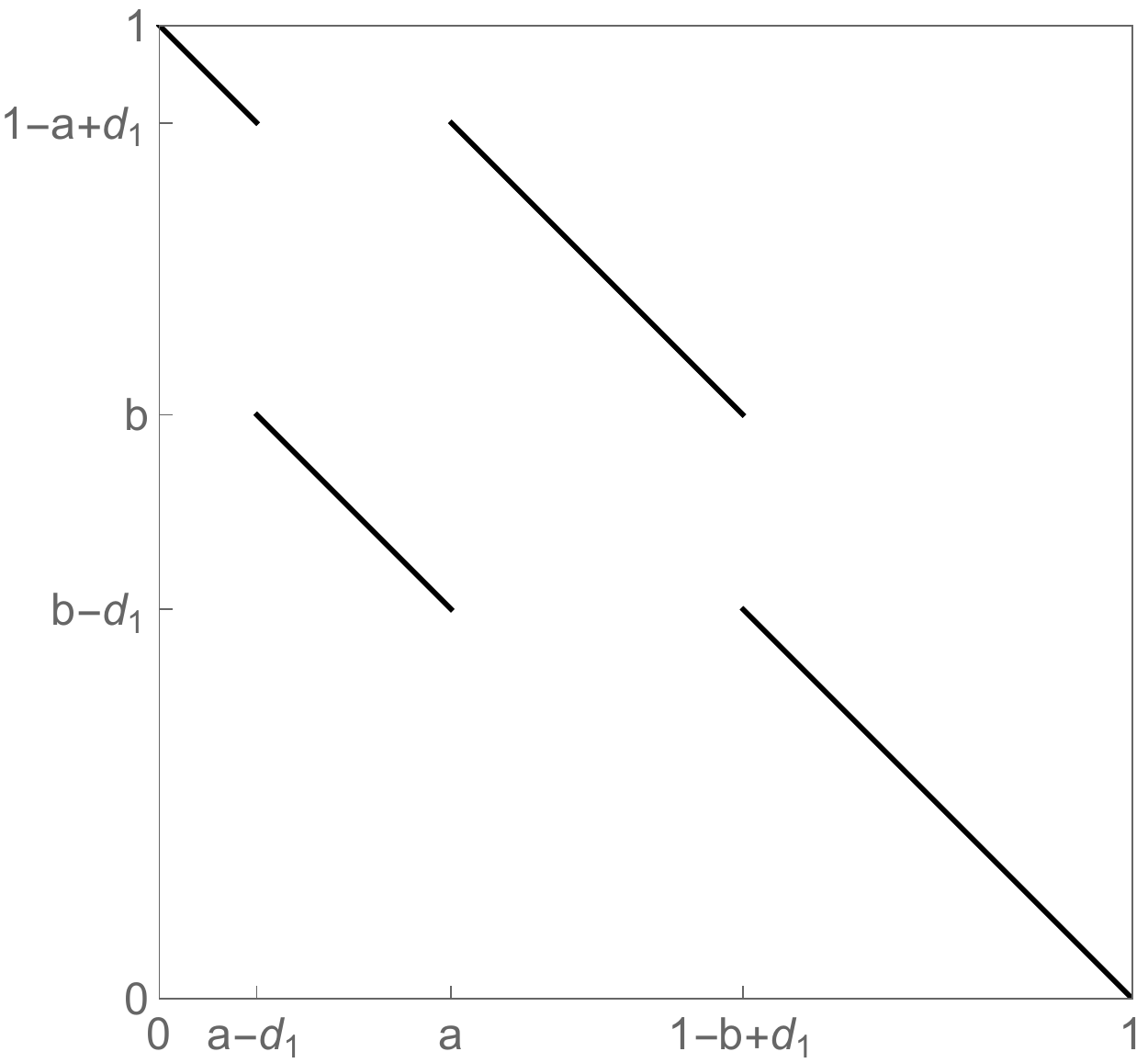}
            \caption{The surface of $\underline{C}^{(a,b)}_{c}$ and its scatterplot }\label{fig1}
\end{figure}

\begin{figure}[h]
            \includegraphics[width=5cm]{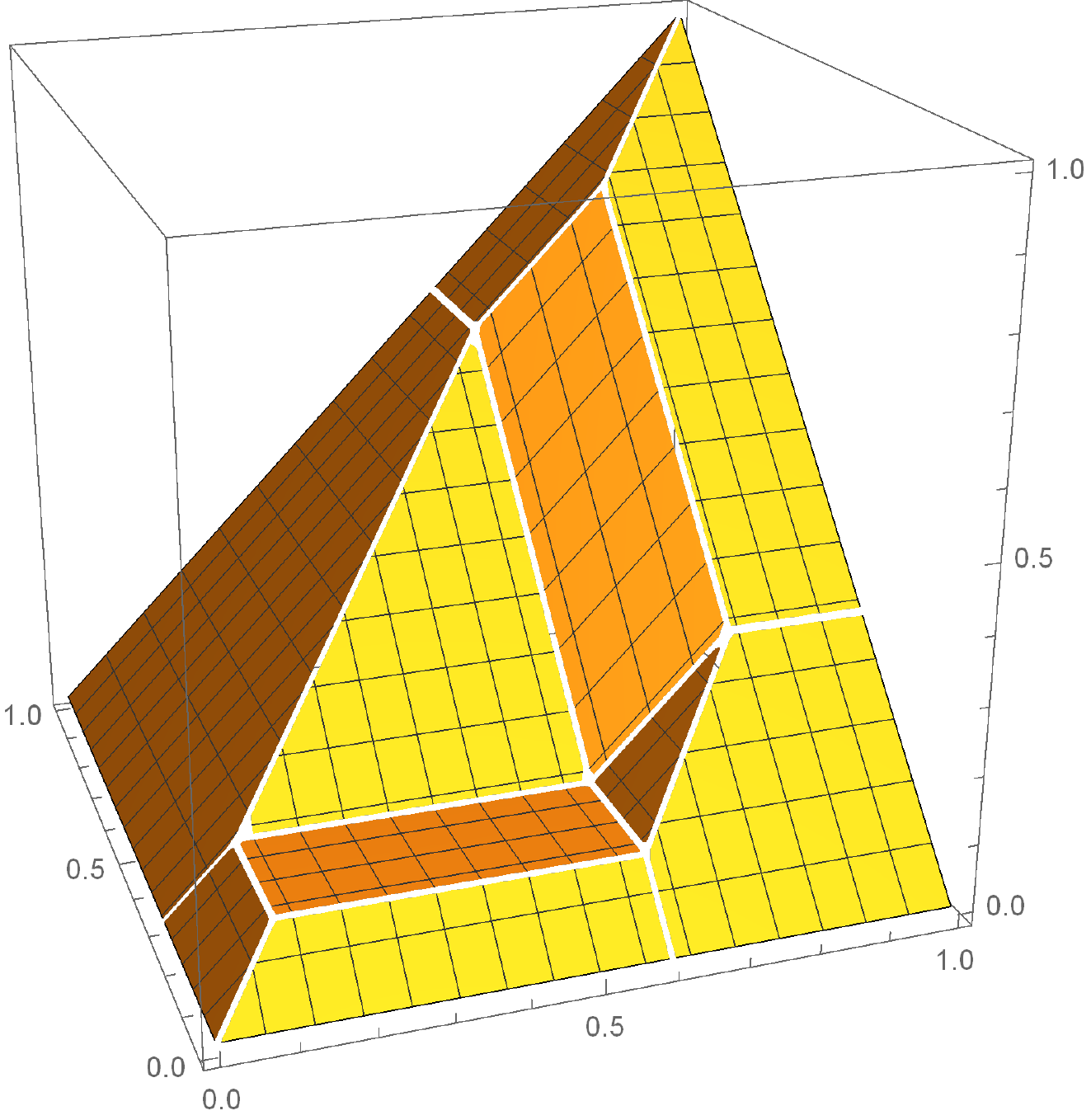} \hfil \includegraphics[width=5cm]{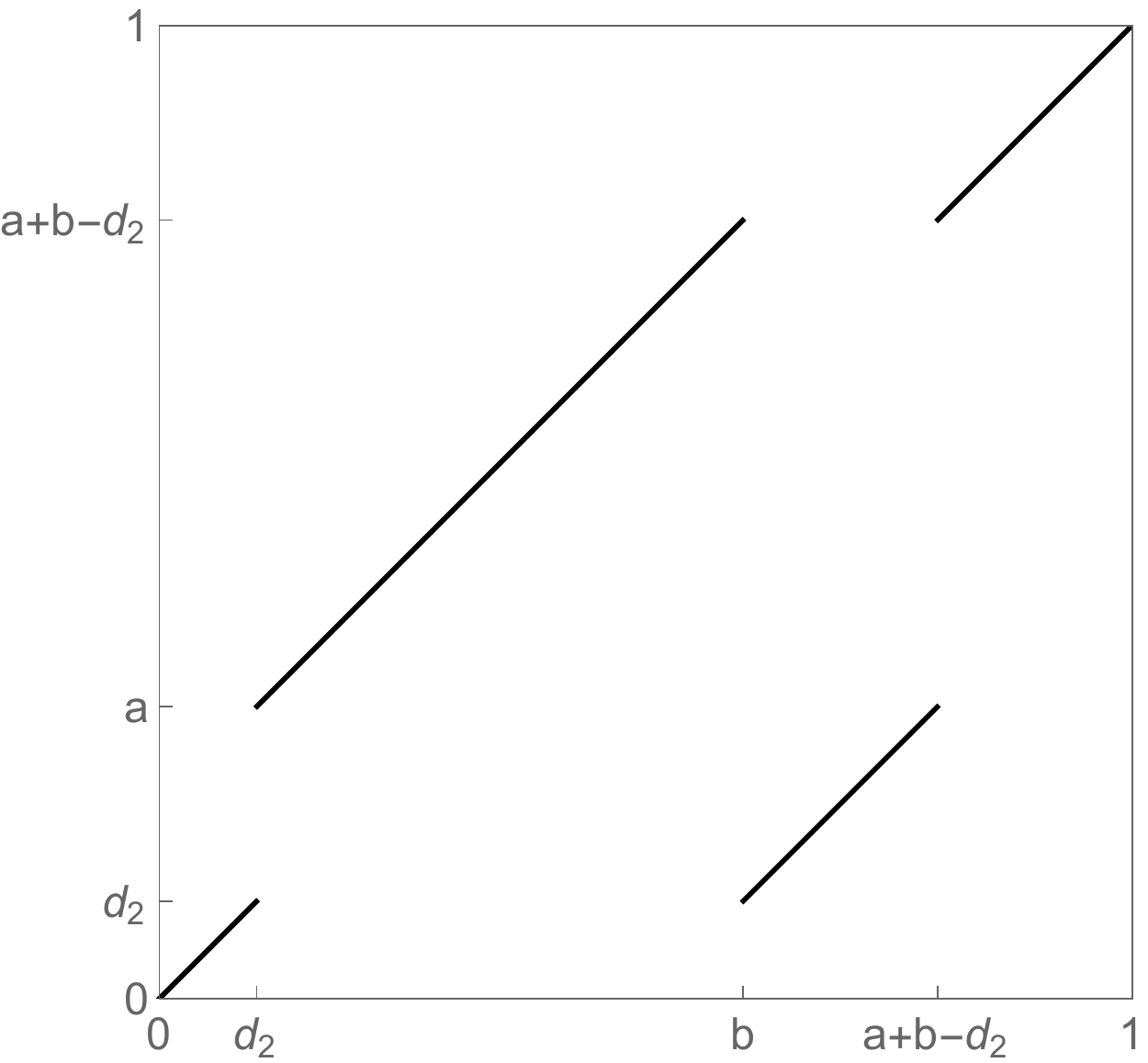}
            \caption{The surface of $\overline{C}^{(a,b)}_{c}$ and its scatterplot }\label{fig2}
\end{figure}

\begin{proof}
Functions $\underline{C}^{(a,b)}_{c}(u,v)$ and $\overline{C}^{(a,b)}_{c}(u,v)$ are exactly copulas $C_L$ and $C_U$, respectively, given in \cite[Theorem 3.2.3]{Nels}. Observe that reflecting the scatterplot of $\overline{C}^{(a,b)}_{c}(u,v)$ (see Figure \ref{fig2}) over the lines $x=\frac12$ and $y=\frac12$, respectively, gives the scatterplots of $\underline{C}^{(1-a,b)}_{c}$ and $\underline{C}^{(a,1-b)}_{c}$, respectively (see Figure \ref{fig1}). Thus equalities in \eqref{rel Cabc} hold. A simple comparison of parameters also shows that the equalities in \eqref{rel d12} hold. Finally, equalities in \eqref{rel Csymm} follow if we reflect the two scatterplots in Figures \ref{fig1} and \ref{fig2} along the main diagonal of the unit square.
\end{proof}

Observe that $c$ is small enough so that everywhere close to the boundary of the square $\II^2$ copula $W$ prevails in the definition of $\underline{C}^{(a,b)}_{c}$ and copula $M$ prevails in the definition of $\overline{C}^{(a,b)}_{c}$. Note that $\underline{C}^{(a,b)}_{c}$ is a {flipped} shuffle of 
{$M$ (see \cite[{\S}3.6]{DuSe})} 
and $\overline{C}^{(a,b)}_{c}$ is a (straight) shuffle of $M$. Observe also that condition $c\in[0,d_\CC^*(a,b)]$ is stronger that the assumption $0\leqslant c\leqslant\min\{a,b,1-a,1-b\}$ on $c$ in Lemma \ref{lem1}. Then, equality \eqref{asymmetry point} fails for $c$ such that $0\leqslant c\leqslant\min\{a,b,1-a,1-b\}$ but $c>|b-a|$.
}

\begin{theorem} \label{th1}
  For any $c\in[0,d_\CC^*(a,b)]$ there exist copulas $\underline{C}, \overline{C}$ satisfying Condition \eqref{asymmetry point} such that for every copula $C$ satisfying \eqref{asymmetry point} we have $\underline{C}\leqslant C\leqslant \overline{C}$.
\end{theorem}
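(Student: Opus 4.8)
The plan is to take $\underline C=\underline C^{(a,b)}_{c}$ and $\overline C=\overline C^{(a,b)}_{c}$ from Lemma \ref{lem1}; these are copulas by that lemma, so it remains to check that (i) both satisfy \eqref{asymmetry point}, and (ii) in the pointwise order they sandwich every copula $C$ satisfying \eqref{asymmetry point}.

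For (i) I would first rewrite \eqref{max asym copula left} and \eqref{max asym copula right} in the more familiar form $\underline C(u,v)=\max\{W(u,v),\,d_1-(a-u)^+-(b-v)^+\}$ and $\overline C(u,v)=\min\{M(u,v),\,d_2+(u-b)^++(v-a)^+\}$, using the elementary identities $\min\{0,x,y,x+y\}=-(-x)^+-(-y)^+$ and $\max\{0,x,y,x+y\}=x^++y^+$ (with $x=u-a,\ y=v-b$ in the first case and $x=u-b,\ y=v-a$ in the second). Evaluating at $(a,b)$ and $(b,a)$ and invoking the hypothesis $c\le d_\CC^*(a,b)\le|a-b|$ — this is exactly the place where the assumption on $c$ is stronger than in Lemma \ref{lem1}, cf.\ the remark following its proof — gives $\underline C(a,b)=d_1$, $\underline C(b,a)=W(a,b)$, $\overline C(a,b)=M(a,b)$, $\overline C(b,a)=d_2$; by \eqref{d_1} and \eqref{d_2} both differences equal $c$.

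For (ii), let $C$ be an arbitrary copula with $C(a,b)-C(b,a)=c$. From $C(b,a)\ge W(b,a)=W(a,b)$ we get $C(a,b)=C(b,a)+c\ge W(a,b)+c=d_1$, and from $C(a,b)\le M(a,b)$ we get $C(b,a)=C(a,b)-c\le M(a,b)-c=d_2$. Now I would run the standard estimate underlying \cite[Theorem 3.2.3]{Nels}: since a copula is nondecreasing in each argument and satisfies $C(s,t)-C(s',t)\le s-s'$ for $s\ge s'$ (and symmetrically in the second variable), one has $C(u,v)\ge C(a,b)-(a-u)^+-(b-v)^+$ and $C(u,v)\le C(b,a)+(u-b)^++(v-a)^+$ for every $(u,v)\in\II^2$; combining the first inequality with $C(u,v)\ge W(u,v)$ and $C(a,b)\ge d_1$ yields $C\ge\underline C$, and combining the second with $C(u,v)\le M(u,v)$ and $C(b,a)\le d_2$ yields $C\le\overline C$.

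I expect the only genuinely delicate point to be this: the classical local bounds are stated for copulas taking a \emph{prescribed} value at a single point, whereas here we only control $C(a,b)$ and $C(b,a)$ through the one-sided inequalities $C(a,b)\ge d_1$ and $C(b,a)\le d_2$. This causes no harm because the lower (resp.\ upper) bound copula of \cite[Theorem 3.2.3]{Nels} is nondecreasing in the prescribed value — transparent from the explicit formulas in Lemma \ref{lem1} — equivalently, the short Lipschitz estimate above uses precisely these inequalities and nothing more. A secondary point requiring care is that the correct reference point for the upper bound is $(b,a)$, not $(a,b)$: this is mirrored structurally by the fact that \eqref{max asym copula right} is built from $d_2$ together with $(u-b)^+$ and $(v-a)^+$, and conceptually by relation \eqref{rel Csymm}.
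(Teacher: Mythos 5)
Your proposal is correct and follows essentially the same route as the paper: it takes the local bounds of Lemma \ref{lem1}, verifies \eqref{asymmetry point} at the points $(a,b)$ and $(b,a)$ using $c\leqslant|b-a|$, and deduces $\underline C\leqslant C\leqslant\overline C$ from $C(a,b)\geqslant d_1$ and $C(b,a)\leqslant d_2$ combined with monotonicity and the 1-Lipschitz property of copulas. The only difference is presentational: your positive-part identity condenses into one line the five-case regional analysis that the paper writes out explicitly.
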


\begin{figure}[h]
            \includegraphics[width=5cm]{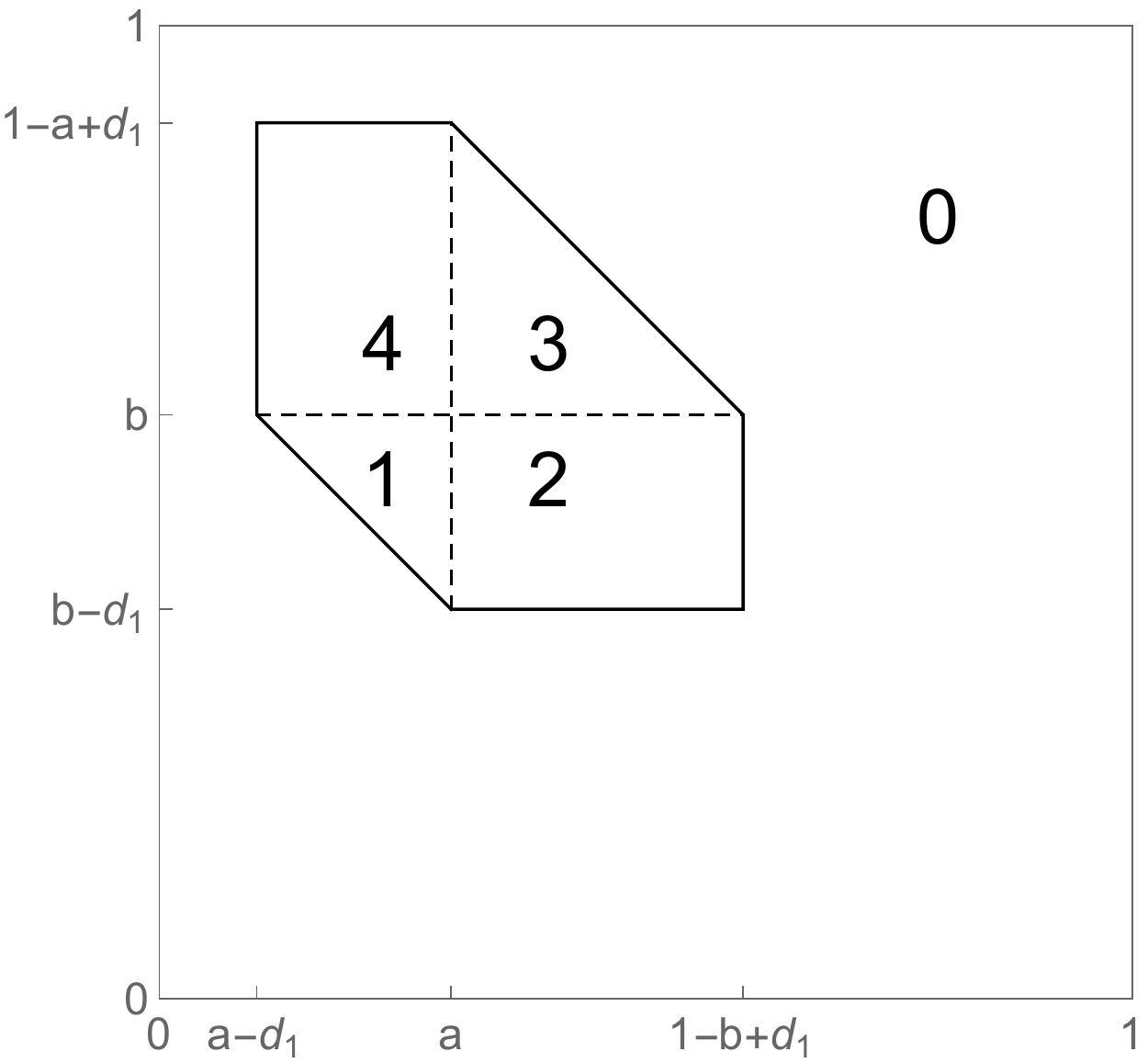} \hfil \includegraphics[width=5cm]{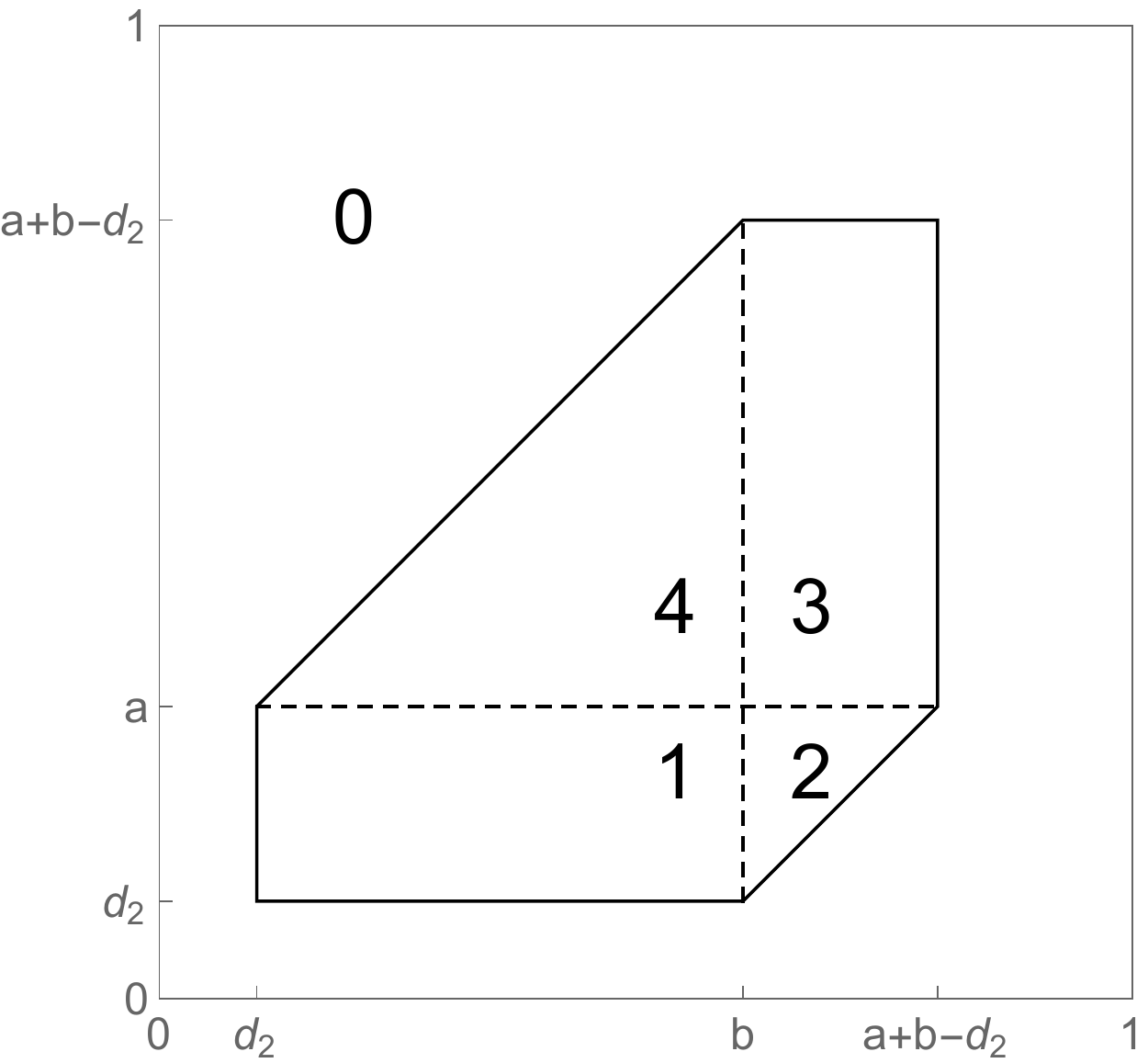}
            \caption{The five cases -- sections -- considered in the proof for copula $\underline{C}$, respectively $\overline{C}$. }\label{podrocja}
\end{figure}

\begin{proof}
{We use the copulas introduced in Lemma \ref{lem1}. We write $\underline{C}(u,v)=\underline{C}^{(a,b)}_{c}(u,v)$ and $\overline{C}(u,v)=\overline{C}^{(a,b)}_{c}(u,v)$. Note that $d_1$ and $d_2$ are given in \eqref{d_1} and \eqref{d_2}, respectively.} 
{It} is clear that $\underline{C}$ satisfies Condition \eqref{asymmetry point}, since $\underline{C}(b,a)=W(b,a)$ and $\underline{C}(a,b)=d_1=W(a,b)+c$. The fact that $\overline{C}$ satisfies this condition goes similarly using the definition of $d_2$.  \\

  {Suppose that $C$ is a copula satisfying \eqref{asymmetry point}.} Let us now show that $\underline{C}\leqslant C$. It is clear that $C(b,a)\geqslant W(b,a)=\underline{C}(b,a)$ and
  \begin{equation}\label{star star}
    C(a,b)=C(b,a)+c\geqslant W(b,a)+c=W(a,b)+c=\underline{C}(a,b)=d_1.
  \end{equation}
  The general proof will be given in 5 cases, corresponding to the areas, shown in Figure \ref{podrocja}.

    Case 0: If $(u,v)$ does not belong to any of the sets 1, 2, 3, or 4, then $\underline{C}=W$ and we are done.

    Case 1:  $a\geqslant u, b\geqslant v, a+b-d_1\leqslant u+v$. Here $\underline{C}(u,v)=u+v-a-b+d_1$.
    Since
    \[
     C(a,v)-C(u,v)\leqslant a-u
    \]
    by the 1-Lipschitz property, and
  \[
     C(a,b)-C(a,v)\leqslant b-v
    \]
  by the same property, it follows that
  \[
    C(u,v)\geqslant C(a,v)-(a-u)\geqslant C(a,b)-(a-u)-(b-v) \geqslant
    \]
    \[
        \geqslant d_1-(a-u)-(b-v) = \underline{C}(u,v),
  \]
    where we have also used \eqref{star star}.

    Case 2: $a\leqslant u\leqslant 1-b+d_1, b-d_1\leqslant v\leqslant b$. Here $\underline{C}=v-b+d_1$, so that
  \[
    C(u,v)\geqslant C(a,v)\geqslant C(a,b)-(b-v)\geqslant \underline{C}(u,v),
  \]
  where we have used respectively the 1-increasing property, the 1-Lipschitz property, and \eqref{star star}.

    Case 3: $a\leqslant u, b\leqslant v, a+b-d_1\geqslant u+v$. Here $\underline{C}=d_1$, so that
  \[
    C(u,v)\geqslant C(a,b)\geqslant d_1=\underline{C}(u,v),
  \]
  where in the first inequality we used twice the 1-increasing property, and in the second one we used \eqref{star star}.

 Case 4: $a-d_1\leqslant u \leqslant a, b\leqslant v\leqslant 1-a+d_1$. Here $\underline{C}=u-a+d_1$, so that
  \[
    C(u,v) \geqslant C(u,b) \geqslant C(a,b)-(a-u) \geqslant \underline{C}(u,v).
  \]
  In the displayed considerations we used the 1-increasing property, the 1-Lipschitz property and then \eqref{star star}.

  Finally, we show that $C\leqslant \overline{C}$. We first observe that $\overline{C}(b,a)=d_2$ and $\overline{C}(a,b)=M(a,b)$ so that the desired property is valid for the two ``central'' points; and then, as above, we use the 5 cases of sections where $\overline{C}$ is either equal to copula $M$ or a simple plane in order to get the property in general.
\end{proof}

Observe that the two copulas 
$\underline{C}^{(a,b)}_{c}$ 
and 
 $\overline{C}^{(a,b)}_{c}$ 
  are the exact lower, respectively upper, bound of all asymmetric copulas that satisfy \eqref{asymmetry point}. We call these copulas \emph{local Fr\'echet-Hoeffding bounds}.

\begin{example}\label{ex1}
 Observe that for $a=0.4$, $b=0.6$ and $c=0.1$ we have that $\underline{C}^{(a,b)}_{c}\leqslant\Pi\leqslant \overline{C}^{(a,b)}_{c}$.  Hence the set of all copulas $C$ such that $\underline{C}^{(a,b)}_{c}\leqslant C\leqslant \overline{C}^{(a,b)}_{c}$ is not in general the same as the set of all copulas $C$ such that $C(a,b)-C(b,a)=c$.
\end{example}

\section{Preliminaries on measures of concordance}

A pair of random vectors $(X_1,X_2)$ and $(Y_1,Y_2)$ is \emph{concordant} if larger values of the first one are associated with larger values of the second one, while smaller values of the first one are associated with smaller values of the second. The opposite notion is the notion of discordance. A pair of random vectors is \emph{discordant} if larger values for the first one are associated with smaller values of the second one, while smaller values of the first one are associated with larger values of the second. With this in mind, we denote by $\cQ$ (see \cite[\S 5.1]{Nels} or \cite[\S 2.4]{DuSe}) the difference of two probabilities $\cQ=P((X_1-X_2)(Y_1-Y_2)>0)-P((X_1-X_2)(Y_1-Y_2)<0)$ for a pair of { continuous} random vectors $(X_1,X_2)$ and $(Y_1,Y_2)$. If the corresponding copulas are $C_1$ and $C_2$ then we have
\begin{equation}\label{concordance}
  \cQ=\cQ(C_1,C_2)= 4 \int_{\II^2} C_2(u,v)dC_1(u,v) -1.
\end{equation}
See \cite[Theorem 5.1.1]{Nels}. Function $\cQ$ is called the \emph{concordance function}. It was introduced by Kruskal \cite{Krus}. This function has a number of useful properties \cite[Corollary 5.1.2]{Nels}:
\begin{description}
  \item[{(Q1)}] It is symmetric in the two arguments: {$\cQ(C_1,C_2)=\cQ(C_2,C_1)$.}
 \item[{(Q2)}] It is nondecreasing in each argument: {If $C_1\leqslant C^{\prime}_1$ and $C_2\leqslant C^{\prime}_2$ then\\ $\cQ(C_1,C_2)\leqslant\cQ(C^{\prime}_1,C_2)$ and $\cQ(C_1,C_2)\leqslant\cQ(C_1,C^{\prime}_2)$.}
 \item[{(Q3)}] It remains unchanged when both copulas are replaced by their survival copulas: \\ {$\cQ(C_1,C_2)=\cQ(\widehat{C}_2,\widehat{C}_1)$.}
\end{description}

{Next, we state an additional property of $\cQ$ that is probably well-known but we could not find it stated in the form we use later. It is known that the dihedral group $D_4$ acts on $\mC$ via transposition and reflections (see, e.g., \cite[Table 2.1]{EdMiTa}). The \emph{direct orbit} of this action on a given copula $C\in\mC$ is $\{C,\widehat{C},C^t,\widehat{C}^t\}$ and its \emph{opposite orbit} is $\{C^{\sigma_1},C^{\sigma_2},(C^t)^{\sigma_1},(C^t)^{\sigma_2}\}$. The concordance function is invariant under the direct action on both arguments, while the opposite action changes the sign. The former follows by (Q2) and simple observation that $\cQ(C_1,C_2)=\cQ(C_1^t,C_2^t)$ holds since the value of the integral in \eqref{concordance} remains the same if we exchange $u$ and $v$. To see the latter we recall \cite[Lemma 2.3]{EdMiTa}, where it is proved that
\begin{equation}\label{sigmaQ}
\int_{\II^2} C_2(u,v)dC_1^{\sigma_j}(u,v)+\int_{\II^2} C_2^{\sigma_j}(u,v)dC_1(u,v)=\frac12\textrm{ for } j=1,2.
\end{equation}
A straightforward consequence of \eqref{sigmaQ} is the following property of the concordance function:
\begin{description}
  \item[{(Q4)}] When both copulas are replaced by their reflected copulas the sign changes: \\ $\cQ(C_1^{\sigma_j},C_2^{\sigma_j})=-\cQ(C_1,C_2)$ for $j=1,2$.
\end{description}
Note that since reflection is an involution Property (Q4) can be restated also in the form $\cQ(C_1^{\sigma_j},C_2)=-\cQ(C_1,C_2^{\sigma_j})$ for $j=1,2$.}

A mapping $\kappa:\CC\to [-1,1]$ is called a \emph{measure of concordance} if it satisfies the following properties (see 
{\cite[Definition 5.1.7]{Nels}}):
\begin{description}
{\item[(C1)] $\kappa(M)=1$.}
\item[(C2)] $\kappa(C)=\kappa(C^t)$ for every $C\in\CC$.
\item[(C3)] $\kappa(C)\leqslant\kappa(D)$ when $C\leqslant D$.  \label{monotone}
\item[(C4)] $\kappa(C^{\sigma_1})=\kappa(C^{\sigma_2})=-\kappa(C)$. \label{kappa_flipped}
\item[(C5)] If a sequence of copulas $C_n$, $n\in\mathbb{N}$, converges uniformly to $C\in\CC$, then $\lim_{n\to\infty}\kappa(C_n)=\kappa(C)$.
\end{description}
We will refer to property (\ref{monotone}) above simply by saying that a measure of concordance under consideration is \emph{monotone}.

{Observe that some more properties that are sometimes stated in the definition follow from the properties listed above. Namely, we also have that a measure of concordance satisfies the following:
\begin{description}
\item[(C6)] $\kappa(\Pi)=0$, where $\Pi$ is the independence copula $\Pi(u,v)=uv$. \label{kappa_Pi}
\item[(C7)] $\kappa(W)=-1$.\label{kappaW}
\item[(C8)] $\kappa(C)=\kappa(\widehat{C})$ for every $C\in\CC$.
\end{description}
Property (C4) implies Property (C6) since $\Pi$ is invariant under both reflections, Property (C7) since $W=(M)^{\sigma_1}$, and Property (C8) since $\widehat{C}=\left(C^{\sigma_1}\right)^{\sigma_2}$.}

Note that properties (C2), (C4) and (C8) can be expressed also in terms of the $D_4$ action on $\CC$. Thus, a measure of concordance leaves a direct orbit of a copula $C\in\CC$ invariant, while it changes sign on the opposite orbit.
\\

{Because of their significance in statistical analysis measures of concordance and their relatives measures of association and measures of dependence are a classical topic of research. It was Scarsini \cite{Scar} who introduced formal axioms of a measure of concordance. Some of more recent references on bivariate measures of concordance include \cite{EdTa,FrNe,FuSch,Lieb,NQRU,NQRU2}. Their multivariate generalization were studied e.g. in \cite{BeDoUF2,DuFu,Tayl,UbFl}. For bivariate copulas the main measures of concordance are naturally studied through symmetries that are consequence of properties (Q1), (Q3) and (Q4) of the concordance function $\cQ$ (see for instance \cite{BeDoUF,EdMiTa,EdMiTa2}). }

{The 
{four} most commonly used measures of concordance of a copula $C$ are Kendall's tau, Spearman's rho, 
Gini's gamma and Blomqvist's beta. {We refer to \cite{Lieb} for an extended definition of a measure of concordance. Roughly, if we replace Property (C4) by Property (C6) we get what Liebscher in \cite{Lieb} calls a \emph{weak measure of concordance}. Spearman's footrule is an example of such a weak measure of concordance.} To simplify the statements from now on, we refer simply as measures of concordance to all five of measures studied: Kendall's tau, Spearman's rho, Gini's gamma, Blomqvist's beta, and Spearman's footrule. Statistical significance of all five measures is already well established. See e.g. \cite[Ch. 5]{Nels} for Kendall's tau, Spearman's rho, and e.g. \cite{GeNeGh,UbFl} for Gini's gamma, Blomqvist's beta and Spearman's footrule.}

The first  four of the measures of concordance may be defined in terms of the concordance function $\cQ$. \emph{Kendall's tau} of $C$ is defined by
\begin{equation}\label{tau}
\tau(C)=\cQ(C,C),
\end{equation}
\emph{Spearman's rho} by
\begin{equation}\label{rho}
\rho(C)=3\cQ(C,\Pi),
\end{equation}
\emph{Gini's gamma} by
\begin{equation}\label{gamma}
\gamma(C)=\cQ(C,M)+\cQ(C,W).
\end{equation}
\emph{Spearman's footrule} by
\begin{equation}
\phi(C)=\textstyle\frac12\left(3\cQ(C,M)-1\right).
\end{equation}
On the other hand, \emph{Blomqvist's beta} is defined by
\begin{equation}\label{beta}
\beta(C)=\textstyle4C\left(\frac12,\frac12\right)-1.
\end{equation}
See \cite[{\S}2.4]{DuSe} and \cite[Ch. 5]{Nels} for further details.

\section{Concordance function evaluated at the local Fr\'echet-Hoeffding bounds} 

In what follows we will need the values of the concordance function $\cQ$ applied to a pair of copulas one of which will be lower bound $\underline{C}^{(a,b)}_{c}$ respectively upper bound $\overline{C}^{(a,b)}_{c}$ defined by \eqref{max asym copula left} respectively by \eqref{max asym copula right}. Observe that these copulas are {flipped and straight} shuffles of 
$M$ {\cite[p. 103]{DuSe} respectively (see Figures \ref{fig1} and \ref{fig2}). A short computation then reveals that
\begin{equation}\label{concordance_left}
\begin{split}
   \cQ(D,\underline{C}^{(a,b)}_{c}) &= 4\int D(u,v)d\underline{C}^{(a,b)}_{c}(u,v)-1 = \\
                      &= 4\int_{0}^{a-d_1}D(u,1-u)du + 4\int_{a-d_1}^{a}D(u,a+b-d_1-u)du +\\
                    & \phantom{x} + 4\int_{a}^{1-b+d_1}D(u,1+d_1-u)du + 4\int_{1-b+d_1}^{1}D(u,1-u)du -1
\end{split}
\end{equation}
respectively
\begin{equation}\label{concordance_right}
 \begin{split}
   \cQ(D,\overline{C}^{(a,b)}_{c}) &= 4\int D(u,v)d\overline{C}^{(a,b)}_{c}(u,v)-1 = \\
                      &= 4\int_{0}^{d_2}D(u,u)du + 4\int_{d_2}^{b}D(u,u+a-d_2)du + \\
                                        & \phantom{x} + 4\int_{b}^{a+b-d_2}D(u,u-b+d_2)du + 4\int_{a+b-d_2}^{1}D(u,u)du -1.
\end{split}
\end{equation}

To compute the values of various measures of concordance we need the above values of $\cQ$ for various copulas $D$: $W$, $\Pi$, $M$, and $\underline{C}^{(a,b)}_{c}$, respectively
$\overline{C}^{(a,b)}_{c}$. Recall that $d_1=d_{1,c}^{(a,b)}$ is given by (\ref{d_1}).

\begin{proposition} \label{prop2}
Let $(a,b)\in\II^2$ and $0\leqslant c\leqslant\min\{a,b,1-a,1-b\}$. For copulas $\underline{C}^{(a,b)}_{c}$ it holds:
\begin{enumerate}
\item $\cQ(W, \underline{C}^{(a,b)}_{c}) = 4d_1(1-a-b+d_1) -1,$
\item $\cQ(\Pi, \underline{C}^{(a,b)}_{c}) = 2d_1(1-a-b+d_1)(1-a-b+2d_1)-\frac13,$
\item $\cQ(M, \underline{C}^{(a,b)}_{c}) =$ \\  $=\left\{ \begin{array}{ll}
        0;                        & \text{if } b \geqslant d_1 + \frac12, \vspace{1mm}\\
        (2d_1+1-2b)^2;            & \text{if } \frac12(1+d_1) \leqslant b \leqslant d_1 + \frac12, a \leqslant b-d_1, \vspace{1mm}\\
        (a+b-1-d_1)(3b-3d_1-a-1); & \text{if } \frac12(1+d_1) \leqslant b \leqslant d_1 + \frac12, a \geqslant b-d_1, \vspace{1mm}\\
        d_1(2+3d_1-4b);           & \text{if } b \leqslant \frac12(1+d_1), a \leqslant b-d_1, \vspace{1mm}\\
 { 2d_1(1+d_1-a-b)-(a-b)^2; } & { \text{if } d_1 \geqslant 2a-1, d_1 \geqslant 2b-1, d_1 \geqslant a-b,d_1 \geqslant b-a,} \vspace{1mm}\\
        d_1(2+3d_1-4a);           & \text{if } a \leqslant \frac12(1+d_1), b \leqslant a-d_1, \vspace{1mm}\\
        (a+b-1-d_1)(3a-3d_1-b-1); & \text{if } \frac12(1+d_1) \leqslant a \leqslant d_1 + \frac12, b \geqslant a-d_1, \vspace{1mm}\\
        (2d_1+1-2a)^2;            & \text{if } \frac12(1+d_1) \leqslant a \leqslant d_1 + \frac12, b \leqslant a-d_1, \vspace{1mm}\\
                0;                        & \text{if } a \geqslant d_1 + \frac12,
                \end{array} \right.$
\item $\cQ(\underline{C}^{(a,b)}_{c}, \underline{C}^{(a,b)}_{c}) = 4d_1(1-a-b+d_1) -1.$
\end{enumerate}
\end{proposition}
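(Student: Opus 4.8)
The plan is to compute all four integrals by plugging each copula $D\in\{W,\Pi,M,\underline{C}^{(a,b)}_{c}\}$ into formula \eqref{concordance_left}, which already expresses $\cQ(D,\underline{C}^{(a,b)}_{c})$ as a sum of four one-dimensional integrals of $D$ along the support segments of the shuffle $\underline{C}^{(a,b)}_{c}$. Throughout I write $d_1=W(a,b)+c$ and keep in mind the case split $\min\{a,b,1-a,1-b\}\geqslant c\geqslant 0$, which guarantees $0\leqslant d_1\leqslant\min\{a,b,1-a,1-b\}$ and hence that the four intervals $[0,a-d_1]$, $[a-d_1,a]$, $[a,1-b+d_1]$, $[1-b+d_1,1]$ are genuinely ordered and nondegenerate.

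\textbf{Parts (1), (2) and (4).} For $D=W$ one has $W(u,1-u)=0$ on the two ``outer'' segments, $W(u,a+b-d_1-u)=\max\{0,a+b-d_1-1\}$ (a constant, often $0$, on the second segment), and $W(u,1+d_1-u)=d_1$ (a constant) on the third segment; so \eqref{concordance_left} collapses to $4d_1\cdot(\text{length of }[a,1-b+d_1])-1=4d_1(1-a-b+d_1)-1$ after checking the constant on the short middle segment does not contribute beyond what is absorbed. For $D=\Pi$, each integrand $u\mapsto u\cdot(\text{affine in }u)$ is an explicit quadratic, so the four integrals are elementary; the only care needed is algebraic bookkeeping to collapse the answer into the stated factored form $2d_1(1-a-b+d_1)(1-a-b+2d_1)-\tfrac13$, which I would verify by expanding both expressions. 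Part (4) is the observation that $\underline{C}^{(a,b)}_{c}$ restricted to its own support behaves, against $d\underline{C}^{(a,b)}_{c}$, exactly as $W$ does: on each of the four support segments the value of $\underline{C}^{(a,b)}_{c}$ coincides with the corresponding constant/affine expression that $W$ takes there (this is precisely the content of the $\max\{W,\ldots\}$ definition together with the boundary-dominance remark after Lemma \ref{lem1}), so $\cQ(\underline{C}^{(a,b)}_{c},\underline{C}^{(a,b)}_{c})=\cQ(W,\underline{C}^{(a,b)}_{c})$, giving (4) for free once (1) is done. I would state this reduction explicitly rather than recompute.

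\textbf{Part (3): the main obstacle.} The hard part is $\cQ(M,\underline{C}^{(a,b)}_{c})$, because $M(u,v)=\min\{u,v\}$ is only piecewise affine, with a kink along $u=v$, so each of the four integrals in \eqref{concordance_left} must be split further according to whether the support line $v=v(u)$ lies above or below the diagonal. On the outer segments $v=1-u$ the crossing is at $u=\tfrac12$; on the second segment $v=a+b-d_1-u$ the crossing is at $u=\tfrac12(a+b-d_1)$; on the third segment $v=1+d_1-u$ the crossing is at $u=\tfrac12(1+d_1)$. Whether each crossing point falls inside the relevant interval, and on which side, is governed by inequalities among $a$, $b$, $d_1$ and $\tfrac12$ — exactly the nine regimes listed in the statement. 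My plan is to set up the generic decomposition once, then methodically walk through the cases: the ``central'' case $d_1\geqslant|a-b|$, $d_1\geqslant 2a-1$, $d_1\geqslant 2b-1$ (all crossings interior, symmetric contributions from all four pieces, yielding $2d_1(1+d_1-a-b)-(a-b)^2$) as the template, then the degenerations where $b$ (resp.\ $a$) is large enough that $M$ contributes nothing near one end (cases giving $0$), and the intermediate ``corner'' cases giving the quadratic and product forms. I expect the bulk of the work — and the only real risk of error — to be the consistent handling of these case boundaries and the verification that adjacent cases agree on their common boundary (a useful internal check I would carry out), together with invoking the symmetry relations \eqref{rel Csymm}–\eqref{rel d12} to halve the casework by deriving the $a\leftrightarrow b$ cases from the others. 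Everything else is routine integration of affine and quadratic functions.
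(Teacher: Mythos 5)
Your strategy for parts (1)--(3) --- evaluating \eqref{concordance_left} segment by segment and, for $D=M$, splitting each support segment at its intersection with the main diagonal --- is essentially the paper's own proof; the paper merely organizes the nine regimes by the position of the hexagon vertices $P,\dots,U$ relative to the diagonal, which is equivalent to your crossing-point conditions, and it likewise disposes of the $a\leftrightarrow b$ mirror cases by symmetry. The genuine difference is part (4): the paper obtains $\cQ(\underline{C}^{(a,b)}_{c},\underline{C}^{(a,b)}_{c})$ by citing \cite[Theorem 5.1]{FuMcCSch}, whereas you derive it from part (1) by observing that $\underline{C}^{(a,b)}_{c}$ coincides with $W$ along its own support, so that integrating $\underline{C}^{(a,b)}_{c}$ or $W$ against $d\underline{C}^{(a,b)}_{c}$ gives the same number. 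This is correct: on the two outer segments and on the anti-diagonal bump segment $u+v=a+b-d_1$ both functions vanish (using $d_1\geqslant a+b-1$), and on the segment $u+v=1+d_1$ both equal $d_1$; note, however, that this last coincidence needs the one-line check that $u-a+d_1$, $v-b+d_1$ and $1+2d_1-a-b$ are all $\geqslant d_1$ there --- the boundary-dominance remark after Lemma \ref{lem1} alone does not cover this interior segment. Your route buys a short, self-contained elementary argument; the paper's citation buys brevity. Two small slips to repair when writing it up: the inequality $d_1\leqslant\min\{a,b,1-a,1-b\}$ is false in general (e.g.\ $a=b=0.9$, $c=0.1$ gives $d_1=0.9$); what is true, and all you need for the ordering $0\leqslant a-d_1\leqslant a\leqslant 1-b+d_1\leqslant 1$, is $W(a,b)\leqslant d_1\leqslant\min\{a,b\}$. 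Likewise, in the central case of part (3) not all crossings are interior: only the two bump segments cross the diagonal, while the outer segments lie entirely above, respectively below, it; your stated value $2d_1(1+d_1-a-b)-(a-b)^2$ is nevertheless the correct one.
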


\begin{proof}
\begin{enumerate}
    \item Observe that $W(u,1-u)=W(u,a+b-d_1-u)=0$, so the first, the second and the fourth integral in \eqref{concordance_left} are zero. Further, $W(u,1+d_1-u)=d_1$, hence $\cQ(W,\underline{C}^{(a,b)}_{c})=4d_1(1-a-b+d_1) -1$.
    \item Observe that $\Pi(u,a+b-d_1-u)=u(a+b-d_1-u)=u(1-u)+u(a+b-d_1-1)$ and $\Pi(u,1+d_1-u)=u(1+d_1-u)=u(1-u)+u d_1$. It follows that
    \[\begin{split}
    \cQ(\Pi,\underline{C}^{(a,b)}_{c})  &= 4 \int_0^1 u(1-u) du + 4\int_{a-d_1}^a u(a+b-d_1-1) du + 4\int_a^{1-b+d_1} u d_1 du -1 =\\ &=  2d_1(1-a-b+d_1)(1-a-b+2d_1)-\frac13.
    \end{split}\]
    \item The value of $\cQ(M, \underline{C}^{(a,b)}_{c})$ depends on where the vertices $P(a, b-d_1)$, $Q(1-b+d_1, b-d_1)$,
    $R(1-b+d_1, b)$, $S(a, 1-a+d_1)$, $T(a-d_1, 1-a+d_1)$, and $U(a-d_1, b)$ of the hexagon $PQRSTU$ lie with respect to the main diagonal $u=v$
    (see Figure \ref{sestkotnik}).

\begin{figure}[h]
            \includegraphics[width=5cm]{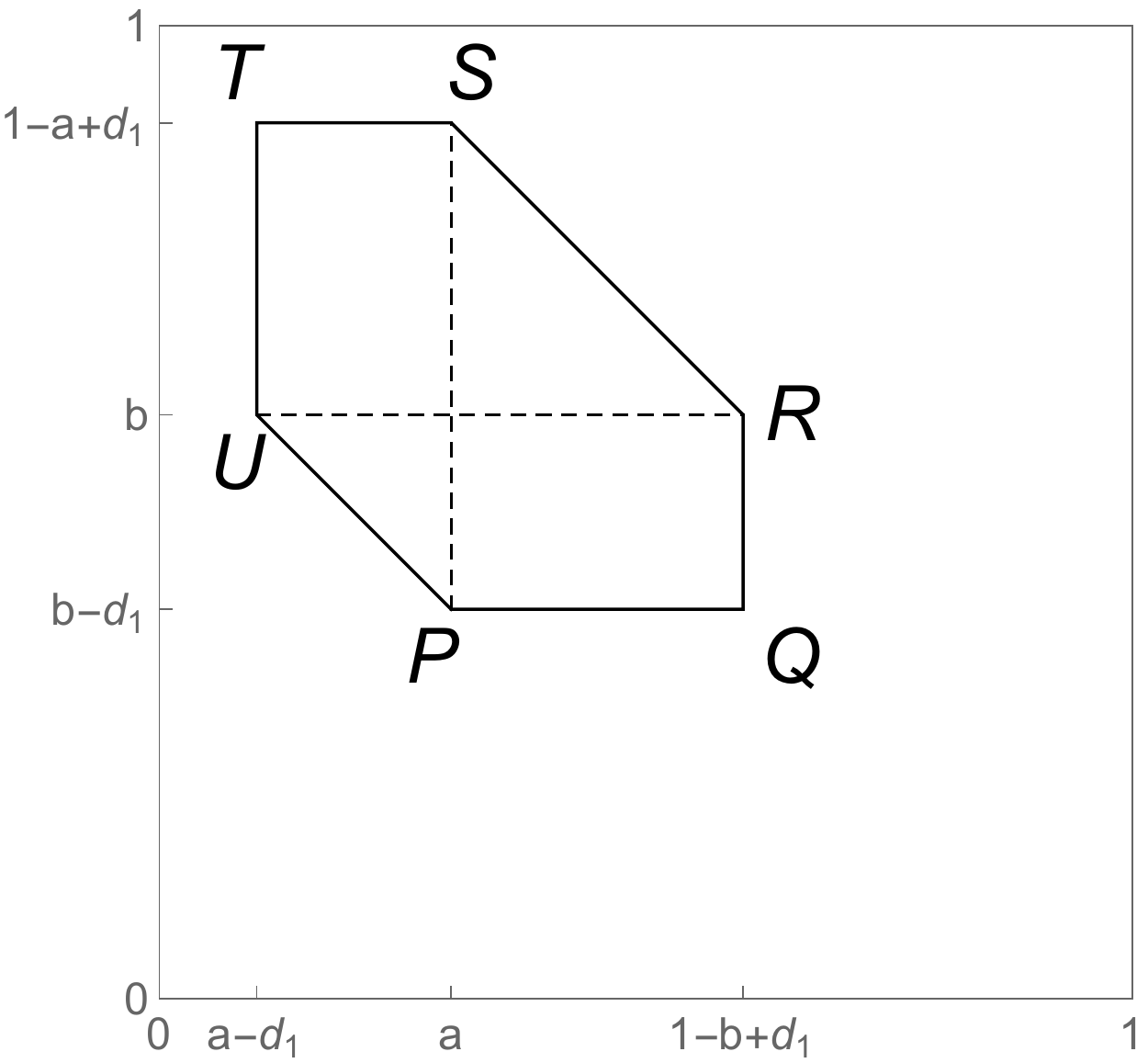}
            \caption{The hexagon $PQRSTU$ }\label{sestkotnik}
\end{figure}

    \noindent We consider several cases.
    \begin{enumerate}
        \item All the vertices lie above the main diagonal, which holds as soon as $Q$ lies above the main diagonal. It means that
            $b \geqslant d_1 + \frac12$. In this case in the first, the second and the third integral of
            \eqref{concordance_left} we have $M(u, v) = u$, while for the fourth integral we have $M(u,v)=u$ for $u\in [1-b+d_1,\frac12]$ and  $M(u,v)=1-u$ for $u\in                         [\frac12,1]$, so that
            $$ \cQ(M,\underline{C}^{(a,b)}_{c})  = 4 \int_0^{\frac12} u du + 4\int_{\frac12}^1 (1-u) du -1 = 0.$$
        \item $Q$ lies below the main diagonal, and all other vertices lie above the main diagonal, which holds as soon as $P$
            and $R$ lie above the main diagonal. It means that $\frac12(1+d_1) \leqslant b \leqslant d_1 + \frac12, a \leqslant b-d_1$. In this case again, we have $M(u, v) = u$ in
            the first, the second and the third integral of \eqref{concordance_left}, while in
            the fourth integral we have $M(u, v) = v$. Then it follows that
            $$ \cQ(M,\underline{C}^{(a,b)}_{c})  = 4 \int_0^{1-b+d_1} u du + 4\int_{1-b+d_1}^1 (1-u) du -1 = (2d_1+1-2b)^2.$$
        \item $P$ and $Q$ lie below the main diagonal, and all other vertices lie above the main diagonal. It means that
            $\frac12(1+d_1) \leqslant b \leqslant d_1 + \frac12, a \geqslant b-d_1$. In this case we have $M(u, v) = u$ in
            the first and the third integral of \eqref{concordance_left},  we have $M(u, v) = v$ in
            the fourth integral, while the second integral splits into two parts
            $$ \int_{a-d_1}^{a} M(u,a+b-d_1-u) du = \int_{a-d_1}^{\frac12(a+b-d_1)} udu + \int_{\frac12(a+b-d_1)}^{a} (a+b-d_1-u)du,$$
            so that
            \[\begin{split}
           \cQ(M,\underline{C}^{(a,b)}_{c})  &= 4 \int_0^{\frac12(a+b-d_1)} u du + 4\int_{\frac12(a+b-d_1)}^{a} (a+b-d_1-u)du + \\
                                       & \phantom{x} + 4 \int_a^{1-b+d_1} u du + 4\int_{1-b+d_1}^1 (1-u) du -1 = \\
                                 &= (a+b-1-d_1)(3b-3d_1-a-1).
            \end{split}\]
        \item $Q$ and $R$ lie below the main diagonal, and all other vertices lie above the main diagonal. It means that
            $b \leqslant \frac12(1+d_1), a \leqslant b-d_1$. In this case in
            the first and the second integral of \eqref{concordance_left} we have $M(u, v) = u$, in
            the fourth we have $M(u, v) = v$, and the third integral splits into two parts
            $$ \int_{a}^{1-b+d_1} M(u,1+d_1-u)du = \int_{a}^{\frac12(1+d_1)} udu + \int_{\frac12(1+d_1)}^{1-b+d_1} (1+d_1-u)du,$$
            so that
            \[\begin{split}
           \cQ(M,\underline{C}^{(a,b)}_{c})  &= 4 \int_0^{\frac12(1+d_1)} u du + 4\int_{\frac12(1+d_1)}^{1-b+d_1} (1+d_1-u)du + \\
                                       & \phantom{x} + 4\int_{1-b+d_1}^1 (1-u) du -1 = \\
                                 &= d_1(2+3d_1-4b).
            \end{split}\]
        \item $P$, $Q$ and $R$ lie below the main diagonal, and $S$, $T$ and $U$ lie above the main diagonal. It means that
            $d_1 \geqslant 2b-1, d_1 \geqslant 2a-1, d_1 \geqslant b-a$, and $d_1 \geqslant a-b$. {In this case in the first integral of \eqref{concordance_left} we have $M(u, v) = u$, in the fourth integral we have $M(u, v) = v$, while the second and the third integral split into two parts. We get
             \[\begin{split}
           \cQ(M,\underline{C}^{(a,b)}_{c})  &= 4 \int_0^{a-d_1} u du + 4\int_{a-d_1}^{\frac12(a+b-d_1)} u du + \\
                                       & \phantom{x} + 4\int_{\frac12(a+b-d_1)}^a (a+b-d_1-u) du + 4\int_a^{\frac12(1+d_1)} u du +\\
                                       & \phantom{x} + 4\int_{\frac12(1+d_1)}^{1-b+d_1} (1+d_1- u) du + 4\int_{1-b+d_1}^1 (1 - u) du - 1 =\\
                                 &= 2d_1(1+d_1-a-b)-(a-b)^2.
            \end{split}\]
            } 
        \item $S$ and $T$ lie above the main diagonal, and all other vertices lie below the main diagonal. This case is symmetric
            to case (d), only the roles of $a$ and $b$ are interchanged.
        \item $T$ and $U$ lie above the main diagonal, and all other vertices lie below the main diagonal. This case is symmetric
            to case (c).
        \item $T$ lies above the main diagonal, and all other vertices lie below the main diagonal. This case is symmetric
            to case (b).
        \item All the vertices lie below the main diagonal. This case is symmetric to case (a).
    \end{enumerate}
    \item 
        {This is a consequence of \cite[Theorem 5.1]{FuMcCSch}.}
\end{enumerate}
\end{proof}

For the next Proposition recall that {$d_1=d_{1,c}^{(a,b)}$ and $d_2=d_{2,c}^{(a,b)}$ are} 
given by {\eqref{d_1} and} (\ref{d_2}), {respectively}.

\begin{proposition} \label{prop3}
Let $(a,b)\in\II^2$ and $0\leqslant c\leqslant\min\{a,b,1-a,1-b\}$. For copulas $\overline{C}^{(a,b)}_{c}$ it holds:
\begin{enumerate}
\item $\cQ(W, \overline{C}^{(a,b)}_{c}) = $ \\ $=\left\{ \begin{array}{ll}
        0;                                & \text{if } a+b-\frac12 \leqslant d_2, \vspace{1mm}\\
        -(2a+2b-2d_2-1)^2;                & \text{if } \max \{2a+b-1,a+2b-1\} \leqslant d_2 \leqslant a+b-\frac12, \vspace{1mm}\\
        -(4a+3b-3d_2-2)(b-d_2);           & \text{if } a+2b-1\leqslant d_2 \leqslant 2a+b-1, \vspace{1mm}\\
        -(3a+4b-3d_2-2)(a-d_2);           & \text{if } 2a+b-1\leqslant d_2 \leqslant a+2b-1, \vspace{1mm}\\
        (a-1)^2+(b-1)^2+2d_2(a+b-d_2)-1;  & \text{if } d_2 \leqslant \min \{ 1-a,1-b,2a+b-1,a+2b-1\}, \vspace{1mm}\\
        (a-d_2)(a+3d_2-2);                & \text{if } 1-b \leqslant d_2 \leqslant 1-a, \vspace{1mm}\\
                (b-d_2)(b+3d_2-2);                & \text{if } 1-a \leqslant d_2 \leqslant 1-b, \vspace{1mm}\\
        -(1-2d_2)^2;                      & \text{if } \max \{1-a,1-b \} \leqslant d_2 \leqslant \frac12, \vspace{1mm}\\
                0;                                & \text{if } \frac12 \leqslant d_2,
                \end{array} \right.$
\item $\cQ(\Pi, \overline{C}^{(a,b)}_{c}) = \frac13-2(a+b-2d_2)(a-d_2)(b-d_2),$
\item $\cQ(M, \overline{C}^{(a,b)}_{c}) = 1 - 4(a-d_2)(b-d_2),$
\item $\cQ(\overline{C}^{(a,b)}_{c}, \overline{C}^{(a,b)}_{c}) = 1 - 4(a-d_2)(b-d_2).$
\end{enumerate}
\end{proposition}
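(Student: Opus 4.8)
The plan is to mirror the proof technique of Proposition~\ref{prop2}, exploiting the explicit shuffle structure of $\overline{C}^{(a,b)}_{c}$ encoded in formula \eqref{concordance_right}. For parts (2) and (3) the supports of the integrals are unconditional, so a direct substitution into \eqref{concordance_right} suffices. For part (1) the value depends on where the diagonal $u=v$ crosses the four linear pieces of the support of $\overline{C}^{(a,b)}_{c}$, exactly as in Proposition~\ref{prop2}(3); here the relevant hexagon has vertices determined by $d_2$, and one works through the nine cases by tracking which of those vertices lie above or below the main diagonal. Part (4) is the self-concordance $\tau(\overline{C}^{(a,b)}_{c})$, which one can obtain either from \cite[Theorem 5.1]{FuMcCSch} (as was done in Proposition~\ref{prop2}(4)) or, more quickly, by applying the relation \eqref{rel Cabc} together with the fact that $\cQ$ is invariant under the direct $D_4$-action and that $\overline{C}^{(a,b)}_{c}=(\underline{C}^{(b,1-a)}_c)^{\sigma_1}$ combined with Property (Q4) turns self-concordance into self-concordance.

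First I would handle part (3), which is also needed repeatedly elsewhere. Plug $D=M$ into \eqref{concordance_right}. On the first and last pieces the integrand is $M(u,u)=u$; on the second piece $M(u,u+a-d_2)=u$ because $a\geqslant d_2$; on the third piece $M(u,u-b+d_2)=u-b+d_2$ because $b\geqslant d_2$. Thus
\[
\cQ(M,\overline{C}^{(a,b)}_{c})=4\int_0^{a+b-d_2}u\,du+4\int_b^{a+b-d_2}(-b+d_2)\,du+4\int_{a+b-d_2}^1 u\,du-1,
\]
which collapses after elementary integration to $1-4(a-d_2)(b-d_2)$; note $a+b-d_2\leqslant 1$ holds because $c\geqslant 0$ forces $d_2=M(a,b)-c\leqslant M(a,b)$, hence $a+b-d_2\geqslant \max\{a,b\}$ and $a+b-d_2\leqslant a+b-M(a,b)+\min\{a,b\}=\min\{a,b\}\cdot\ldots$; in fact $a+b-d_2 = \min\{a,b\}+c\leqslant 1$ under the standing hypothesis, so all integral limits are ordered. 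Then part (2): plug $D=\Pi$, write $\Pi(u,u+a-d_2)=u^2+u(a-d_2)$ and $\Pi(u,u-b+d_2)=u^2+u(d_2-b)$ and $\Pi(u,u)=u^2$, so $\cQ(\Pi,\overline{C}^{(a,b)}_{c})=4\int_0^1 u^2\,du+4(a-d_2)\int_{d_2}^{b}u\,du+4(d_2-b)\int_b^{a+b-d_2}u\,du-1$, which simplifies to $\tfrac13-2(a+b-2d_2)(a-d_2)(b-d_2)$ after expanding the squares of the limits.

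For part (1) I would set $D=W$ in \eqref{concordance_right} and organize the computation around the hexagon whose six vertices are the endpoints of the two inner linear segments of the support, namely $(d_2,d_2)$, $(b,b+a-d_2)$, $(a+b-d_2,a+b-d_2)$, $(a+b-d_2,a)$ (image under transpose, roughly) — more precisely the breakpoints of the broken line $v=u$, $v=u+a-d_2$, $v=u-b+d_2$, $v=u$. On each of the four pieces $W(u,v)=\max\{0,u+v-1\}$ contributes only where $u+v\geqslant 1$, and the geometry of which pieces meet the line $u+v=1$ is governed by the nine inequalities listed in the statement. I would go through the cases in order, identifying in each the subintervals on which $W$ is nonzero and integrating the resulting linear functions; symmetry in $a\leftrightarrow b$ (which is visible because swapping $a,b$ corresponds to transposing the copula and $\cQ(W,\cdot)=\cQ(W^t,\cdot^t)=\cQ(W,\cdot^t)$... actually $W^t=W$) reduces the nine cases to roughly five genuinely different computations, just as in Proposition~\ref{prop2}(3).

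The main obstacle is the bookkeeping in part (1): one must verify that the nine listed conditions partition the admissible parameter region $\{0\leqslant c\leqslant\min\{a,b,1-a,1-b\}\}$ and that the boundary cases agree (continuity of $\cQ$ in the parameters, which follows from (C5)/(B5)-type continuity of the construction). The algebra in each case is routine integration of piecewise-linear functions, but getting the case boundaries exactly right — especially distinguishing $a+2b-1\leqslant d_2\leqslant 2a+b-1$ from its mirror and handling the central case $d_2\leqslant\min\{1-a,1-b,2a+b-1,a+2b-1\}$ where the diagonal cuts both inner segments — is where care is required. Parts (2)–(4) carry no real difficulty beyond elementary calculus and an appeal to the already-cited self-concordance formula.
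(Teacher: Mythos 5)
Your computations are correct, but your route is genuinely different from the paper's. You work directly from \eqref{concordance_right}: parts (2) and (3) by substituting $\Pi$ and $M$ into the four integrals (your integrals and simplifications are right), part (1) by a nine-case analysis of how the support of $\overline{C}^{(a,b)}_{c}$ meets the line $u+v=1$, and part (4) either via \cite{FuMcCSch} or by reflection. The paper instead proves Proposition \ref{prop3} with no new integration at all: combining Property (Q4) with $\overline{C}^{(a,b)}_{c}=(\underline{C}^{(b,1-a)}_{c})^{\sigma_1}$ from \eqref{rel Cabc} and $d_{2,c}^{(a,b)}=b-d_{1,c}^{(b,1-a)}$ from \eqref{rel d12}, it writes $\cQ(\Pi,\overline{C}^{(a,b)}_{c})=-\cQ(\Pi,\underline{C}^{(b,1-a)}_{c})$, $\cQ(M,\overline{C}^{(a,b)}_{c})=-\cQ(W,\underline{C}^{(b,1-a)}_{c})$ and $\cQ(\overline{C}^{(a,b)}_{c},\overline{C}^{(a,b)}_{c})=-\cQ(\underline{C}^{(b,1-a)}_{c},\underline{C}^{(b,1-a)}_{c})$, so that (2)--(4) are read off from Proposition \ref{prop2} after substituting parameters, and (1) follows from the nine-case formula of Proposition \ref{prop2}(3) by the same translation (the paper omits that bookkeeping). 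Your direct approach is self-contained and keeps the geometry of the shuffle visible, but it pays with the full nine-case integration in (1); the symmetry argument gets the case structure for free, at the cost of carefully transcribing the case conditions under $(a,b)\mapsto(b,1-a)$, $d_1\mapsto b-d_2$. Two small repairs to your write-up: for $\cQ(W,\overline{C}^{(a,b)}_{c})$ the case distinctions are governed by the counter-diagonal $u+v=1$, not the main diagonal $u=v$ announced in your opening plan (your later paragraph has it right, and the relevant hexagon is compared against $u+v=1$); and the ordering of the integration limits should be justified by $a+b-d_2=\max\{a,b\}+c\leqslant 1$, not $\min\{a,b\}+c$ --- the inequality still follows since $c\leqslant\min\{1-a,1-b\}=1-\max\{a,b\}$.
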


\begin{proof}
{To prove the proposition we use Property (Q4) of the concordance function, Lemma \ref{lem1} and Proposition \ref{prop2}.  More precisely, equalities (Q4), \eqref{rel Cabc}, Proposition \ref{prop2}, and \eqref{rel d12} are applied in succession in each of the following three calculations:
\begin{eqnarray*}
    \cQ(\Pi, \overline{C}^{(a,b)}_{c})
    &=& - \cQ(\Pi, \underline{C}^{(b,1-a)}_{c}) \\
    &=& \frac13 - 2d_{1,c}^{(b,1-a)}\left(1-a-b+d_{1,c}^{(b,1-a)}\right)\left(1-a-b+2d_{1,c}^{(b,1-a)}\right)\\
    &=& \frac13-2(a-d_2)(b-d_2)(a+b-2d_2),
\end{eqnarray*}
\begin{eqnarray*}\cQ(M, \overline{C}^{(a,b)}_{c}) &=& - \cQ(W, \underline{C}^{(b,1-a)}_{c}) \\
    &=& 1 - 4d_{1,c}^{(b,1-a)}\left(1-a-b+d_{1,c}^{(b,1-a)}\right)\\
    &=& 1-4(a-d_2)(b-d_2),
\end{eqnarray*}
 and
\begin{eqnarray*}\cQ(\overline{C}^{(a,b)}_{c}, \overline{C}^{(a,b)}_{c}) &=& - \cQ(\underline{C}^{(b,1-a)}_{c}, \underline{C}^{(b,1-a)}_{c}) \\
    &=& 1 - 4d_{1,c}^{(b,1-a)}\left(1-a-b+d_{1,c}^{(b,1-a)}\right)\\
    &=&1-4(a-d_2)(b-d_2).
\end{eqnarray*}

Thus we proved (2), (3) and (4).

To prove (1) we use the same arguments, but a more detailed case by case analysis is required. We omit the details.    }
\end{proof}

It turns out that the results obtained in Propositions \ref{prop2} and \ref{prop3} are symmetric with respect to the main diagonal and to the counter-diagonal. The proofs {follow directly from Lemma \ref{lem1} using Properties (Q1), (Q3) and (Q4) of the concordance function.}

\begin{proposition} \label{prop4}
Let $(a,b)\in\II^2$ and $0\leqslant c\leqslant\min\{a,b,1-a,1-b\}$. Then the following holds for any $D\in \{W, \Pi, M\}$:
\begin{enumerate}
    \item $\cQ(D,\underline{C}^{(a,b)}_{c})=\cQ(D,\underline{C}^{(b,a)}_{c})$ and $\cQ(D,\overline{C}^{(a,b)}_{c})=\cQ(D,\overline{C}^{(b,a)}_{c})$.
    \item $\cQ(D,\underline{C}^{(a,b)}_{c})=\cQ(D,\underline{C}^{(1-a,1-b)}_{c})$ and $\cQ(D,\overline{C}^{(a,b)}_{c})=\cQ(D,\overline{C}^{(1-a,1-b)}_{c})$.
    \item { $\cQ(D,\underline{C}^{(a,b)}_{c})=-\cQ(D^{\sigma_1},\overline{C}^{(b,1-a)}_{c})$ and $\cQ(D,\overline{C}^{(a,b)}_{c})=-\cQ(D^{\sigma_1},\underline{C}^{(b,1-a)}_{c})$.}
    \item $\cQ(\underline{C}^{(a,b)}_{c},\underline{C}^{(a,b)}_{c})=\cQ(\underline{C}^{(b,a)}_{c},\underline{C}^{(b,a)}_{c})$ and $\cQ(\overline{C}^{(a,b)}_{c},\overline{C}^{(a,b)}_{c})=\cQ(\overline{C}^{(a,b)}_{c},\overline{C}^{(a,b)}_{c})$.
    \item $\cQ(\underline{C}^{(a,b)}_{c},\underline{C}^{(a,b)}_{c})=\cQ(\underline{C}^{(1-a,1-b)}_{c},\underline{C}^{(1-a,1-b)}_{c})$ and $\cQ(\overline{C}^{(a,b)}_{c},\overline{C}^{(a,b)}_{c})=\cQ(\overline{C}^{(1-a,1-b)}_{c},\overline{C}^{(1-a,1-b)}_{c})$.
   \item { $\cQ(\underline{C}^{(a,b)}_{c},\underline{C}^{(a,b)}_{c})=-\cQ(\overline{C}^{(b,1-a)}_{c},\overline{C}^{(b,1-a)}_{c})$ and $\cQ(\overline{C}^{(a,b)}_{c},\overline{C}^{(a,b)}_{c})=-\cQ(\underline{C}^{(b,1-a)}_{c},\underline{C}^{(b,1-a)}_{c})$.}
   \end{enumerate}
\end{proposition}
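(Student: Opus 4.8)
The plan is to reduce each of the six identities to an elementary relation, taken from Lemma~\ref{lem1}, between the copulas that appear inside $\cQ$, and then to feed that relation into the appropriate invariance or sign-change property of the concordance function, using that each of $W$, $\Pi$, $M$ is fixed by the relevant element of the $D_4$-action: for $D\in\{W,\Pi,M\}$ one has $D^t=D$, $\widehat D=D$, $\Pi^{\sigma_j}=\Pi$, $W^{\sigma_j}=M$ and $M^{\sigma_j}=W$ for $j=1,2$, so in particular $D^{\sigma_1}=D^{\sigma_2}$. The six statements split into three pairs according to which symmetry of $\cQ$ is used: (1) and (4) are ``transpose'' identities, (2) and (5) are ``survival'' identities, and (3) and (6) are ``reflection'' identities.

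First I would record the copula-level facts that are needed. From \eqref{rel Csymm} one has at once $\underline{C}^{(a,b)}_{c}=(\underline{C}^{(b,a)}_{c})^t$ and $\overline{C}^{(a,b)}_{c}=(\overline{C}^{(b,a)}_{c})^t$. Substituting $(a,b)\mapsto(1-b,a)$ in the first equality of \eqref{rel Cabc} and $(a,b)\mapsto(b,1-a)$ in its second equality, and using that $\sigma_1,\sigma_2$ are involutions, gives
\[
(\underline{C}^{(a,b)}_{c})^{\sigma_1}=\overline{C}^{(1-b,a)}_{c}\qquad\text{and}\qquad(\overline{C}^{(b,1-a)}_{c})^{\sigma_2}=\underline{C}^{(a,b)}_{c}.
\]
Applying $\sigma_2$ to the first of these (once more invoking \eqref{rel Cabc}) and recalling $\widehat{C}=(C^{\sigma_1})^{\sigma_2}$ yields the survival-copula identity $\widehat{\underline{C}^{(a,b)}_{c}}=\underline{C}^{(1-a,1-b)}_{c}$; transposing and using \eqref{rel Csymm} gives likewise $\widehat{\overline{C}^{(a,b)}_{c}}=\overline{C}^{(1-a,1-b)}_{c}$. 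These identities (and their analogues with the roles of $\underline{C}$ and $\overline{C}$ swapped) are the only genuine content of the proof.

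With these in hand each item is a one-line computation. For (1) and (4) apply the transpose-invariance $\cQ(C_1,C_2)=\cQ(C_1^t,C_2^t)$ recorded in Section~3, together with $D^t=D$ and $\underline{C}^{(a,b)}_{c}=(\underline{C}^{(b,a)}_{c})^t$ (resp.\ the same for $\overline{C}$, plugging the copula into both slots). For (2) and (5) apply (Q1) and (Q3) in the combined form $\cQ(C_1,C_2)=\cQ(\widehat{C_1},\widehat{C_2})$, together with $\widehat{D}=D$ and the survival-copula identity above. For (3) and (6) apply (Q4), in the form $\cQ(C_1^{\sigma_j},C_2)=-\cQ(C_1,C_2^{\sigma_j})$ for the off-diagonal statements and $\cQ(C_1^{\sigma_j},C_2^{\sigma_j})=-\cQ(C_1,C_2)$ for the diagonal ones, together with the reflection identities $\overline{C}^{(a,b)}_{c}=(\underline{C}^{(b,1-a)}_{c})^{\sigma_1}$ (this is \eqref{rel Cabc}) and $\underline{C}^{(a,b)}_{c}=(\overline{C}^{(b,1-a)}_{c})^{\sigma_2}$ (derived above); the transition between $\sigma_1$ and $\sigma_2$ in the stated form is absorbed by $D^{\sigma_1}=D^{\sigma_2}$.

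The main obstacle is purely bookkeeping: one must derive the survival-copula identities correctly from the reflection identities of Lemma~\ref{lem1} and keep the index substitutions $a\mapsto b$, $b\mapsto 1-a$, $a\mapsto 1-b$ consistent throughout, since a single slip propagates to all twelve equations. I would also flag that the second equation displayed in item (4) reads $\cQ(\overline{C}^{(a,b)}_{c},\overline{C}^{(a,b)}_{c})=\cQ(\overline{C}^{(a,b)}_{c},\overline{C}^{(a,b)}_{c})$, which is trivial; presumably $\overline{C}^{(b,a)}_{c}$ was intended on the right-hand side, and the transpose argument above establishes that corrected identity.
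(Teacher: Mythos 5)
Your proof is correct and takes essentially the same route as the paper, whose own (very terse) proof likewise derives all the identities from the relations \eqref{rel Csymm} and \eqref{rel Cabc} of Lemma \ref{lem1} together with transpose-invariance of $\cQ$ and Properties (Q1), (Q3), (Q4), using that $W,\Pi,M$ are fixed (up to the swap $W\leftrightarrow M$ under reflection) by the $D_4$-action; your remark about the typo in item (4) is also well taken. One tiny bookkeeping note: $\widehat{\overline{C}^{(a,b)}_{c}}=\overline{C}^{(1-a,1-b)}_{c}$ does not follow just by transposing the $\underline{C}$-identity (that only reproduces it with $(a,b)$ swapped), but it follows in one line from \eqref{rel Cabc} via $\widehat{C}=\left(C^{\sigma_1}\right)^{\sigma_2}$, exactly as in your derivation for $\underline{C}$, so nothing essential is affected.
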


We now prove the crucial lemma which will enable us to find the relations between measures of concordance and asymmetry.

Let $C\in \Cmt$ be any copula with $\mu_\infty(C)=m$ and $\kappa \in \{ \rho, \tau, \phi, \gamma \}$ a measure of concordance. By the definition of asymmetry there exists a pair $(a_0,b_0)\in\II^2$ such that
\[
    \left|C(a_0,b_0)-C(b_0,a_0)\right|=m.
\]
Here we may choose with no loss that $a_0\leqslant b_0$ {due to the fact that the considered measures of concordance are invariant with respect to the symmetry by Proposition \ref{prop4}(1).} However, the expression between vertical bars may be positive or negative. In the case of negative value we replace $C$ with $C^t$, hence we may assume that $C(a_0,b_0)-C(b_0,a_0)=m \geqslant 0$. It follows from Theorem \ref{th1} that
    $$\underline{C}^{(a_0,b_0)}_{m}\leqslant C\leqslant \overline{C}^{(a_0,b_0)}_{m}$$
and by monotonicity of $\kappa$ we have that
\[
    \kappa(\underline{C}^{(a_0,b_0)}_{m})\leqslant \kappa(C)\leqslant \kappa(\overline{C}^{(a_0,b_0)}_{m}).
\]
We fix $m$ and we denote by $\Delta RST$ a triangle within the square $\II^2$ such that
\[
    \Delta RST=\{(a,b)\in\II^2\,;\,a\leqslant b, d_\Cmt^*(a,b) \geqslant m\}.
\]
Since $C(a_0,b_0)-C(b_0,a_0)=m \geqslant 0$ it must be that $(a_0,b_0) \in \Delta RST$. It can be verified easily that
\[
    \Delta RST=\{(a,b)\in\II^2\,; a \geqslant m, b \leqslant 1-m, b-a \geqslant m \}
\]
with the vertices $R(m,2m)$, $S(1-2m,1-m)$ and $T(m,1-m)$. Denote by $U(\frac12(1-m),\frac12(1+m))$ the center of the edge $RS$, see Figure \ref{trikot}.
Define $\Delta_m:=\Delta RUT.$

\begin{figure}[h]
            \includegraphics[width=5cm]{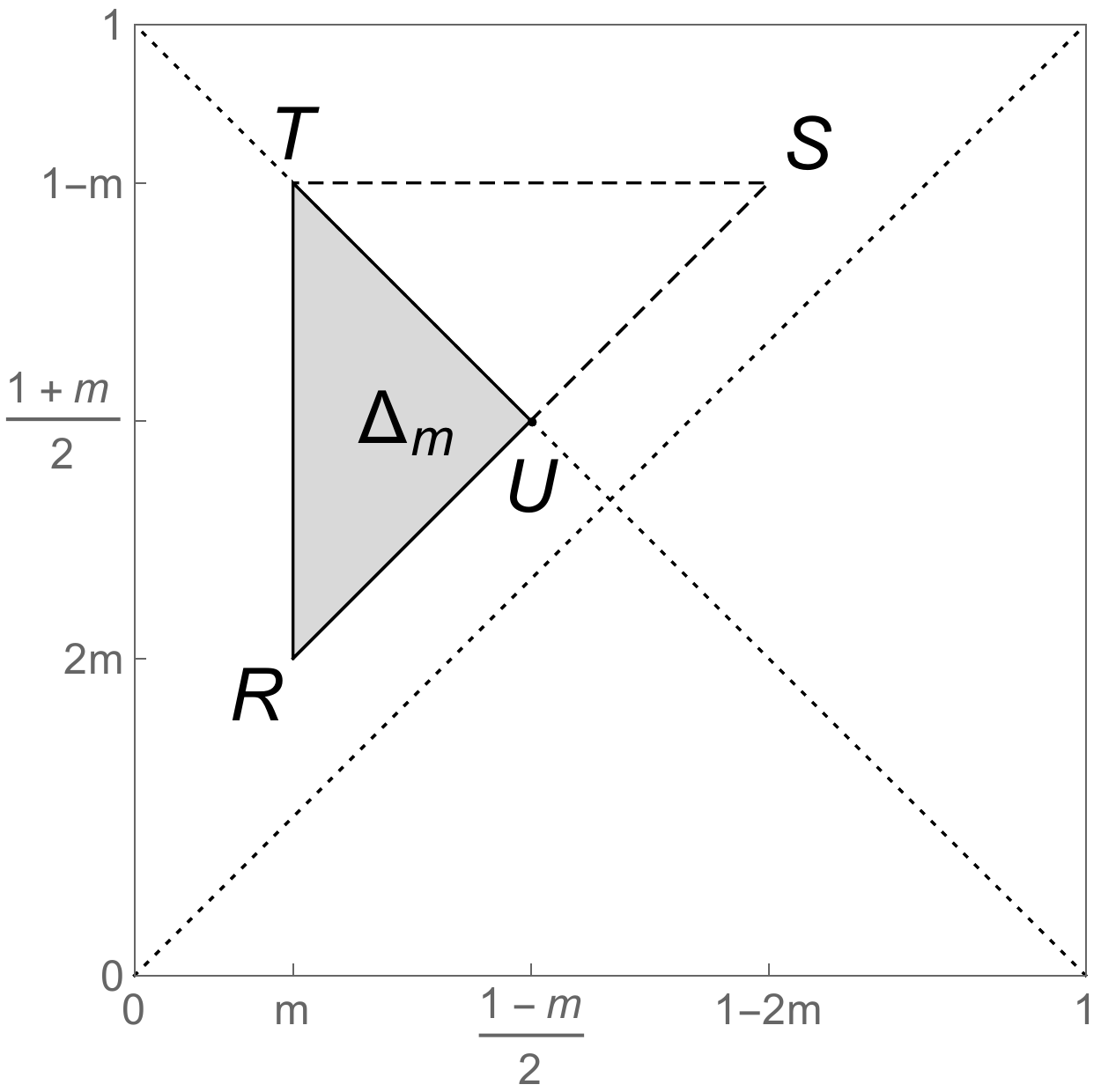}
            \caption{The triangle $\Delta_m$ }\label{trikot}
\end{figure}

It follows that
\[
\kappa(C)\in \left[\min_{(a,b)\in\Delta RST}\kappa(\underline{C}^{(a,b)}_{m}), \max_{(a,b)\in\Delta RST}\kappa(\overline{C}^{(a,b)}_{m})\right].
\]
Due to Proposition \ref{prop4}, the minimum and the maximum can be taken over $\Delta_m$ instead of $\Delta RST$.

\begin{lemma}
Let $C\in \Cmt$ be any copula with $\mu_\infty(C)=m$ and $\kappa \in \{ \rho, \tau, \phi, \gamma \}$ a measure of concordance. Then
\begin{equation} \label{eq:meji}
\kappa(C)\in \left[\min_{(a,b)\in\Delta_m}\kappa(\underline{C}^{(a,b)}_{m}), \max_{(a,b)\in\Delta_m}\kappa(\overline{C}^{(a,b)}_{m})\right].
\end{equation}
\end{lemma}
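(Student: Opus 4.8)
The statement essentially packages the observations made in the paragraphs preceding it, so the plan is to chain those steps together and make explicit the one point left implicit there, namely the passage from the domain $\Delta RST$ to $\Delta_m$.

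First I would set up the normalisation. Given $C\in\Cmt$ with $\mu_\infty(C)=m$, pick $(a_0,b_0)$ with $\left|C(a_0,b_0)-C(b_0,a_0)\right|=m$; after interchanging the two coordinates (which does not change this quantity) we may assume $a_0\leqslant b_0$, and if $C(a_0,b_0)-C(b_0,a_0)<0$ we replace $C$ by $C^t$ — which alters neither $m$ nor, by (C2), the value $\kappa(C)$ — so we may assume $C(a_0,b_0)-C(b_0,a_0)=m\geqslant 0$. Since $d^*_\CC(a_0,b_0)=\sup_{D\in\CC}\left|D(a_0,b_0)-D(b_0,a_0)\right|\geqslant m$, Theorem \ref{th1} applies at $(a,b)=(a_0,b_0)$ with $c=m$ and gives $\underline{C}^{(a_0,b_0)}_{m}\leqslant C\leqslant\overline{C}^{(a_0,b_0)}_{m}$; monotonicity (C3) then yields $\kappa(\underline{C}^{(a_0,b_0)}_{m})\leqslant\kappa(C)\leqslant\kappa(\overline{C}^{(a_0,b_0)}_{m})$. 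By \eqref{eq:kle mes} the conditions $a_0\leqslant b_0$ and $d^*_\CC(a_0,b_0)\geqslant m$ are exactly the inequalities defining $\Delta RST$, so $(a_0,b_0)\in\Delta RST$ and hence
\[
  \min_{(a,b)\in\Delta RST}\kappa(\underline{C}^{(a,b)}_{m})\leqslant\kappa(C)\leqslant\max_{(a,b)\in\Delta RST}\kappa(\overline{C}^{(a,b)}_{m}).
\]
These extrema are attained: on $\Delta RST$ one has $m\leqslant\min\{a,b,1-a,1-b\}$, so by \eqref{max asym copula left}–\eqref{max asym copula right} the maps $(a,b)\mapsto\underline{C}^{(a,b)}_{m}$ and $(a,b)\mapsto\overline{C}^{(a,b)}_{m}$ are well defined and continuous in the uniform norm, whence $(a,b)\mapsto\kappa(\underline{C}^{(a,b)}_{m})$ and $(a,b)\mapsto\kappa(\overline{C}^{(a,b)}_{m})$ are continuous by (C5) on the compact set $\Delta RST$.

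Next I would cut $\Delta RST$ down to $\Delta_m=\Delta RUT$, where $U\bigl(\tfrac12(1-m),\tfrac12(1+m)\bigr)$ is the midpoint of the edge $RS$. The counter-diagonal reflection $r\colon(a,b)\mapsto(1-b,1-a)$ interchanges $R$ and $S$ and fixes $T$ and $U$ (both $T$ and $U$ lie on the line $a+b=1$), hence maps $\Delta RUT$ onto $\Delta UST$; as $\Delta RST=\Delta RUT\cup\Delta UST$, it suffices to check that $(a,b)\mapsto\kappa(\underline{C}^{(a,b)}_{m})$ and $(a,b)\mapsto\kappa(\overline{C}^{(a,b)}_{m})$ are invariant under $r$. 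For every $\kappa\in\{\rho,\tau,\phi,\gamma\}$ the definitions \eqref{tau}, \eqref{rho}, \eqref{gamma} and the defining formula for $\phi$ write $\kappa(\underline{C}^{(a,b)}_{m})$ as an affine combination of the quantities $\cQ(D,\underline{C}^{(a,b)}_{m})$ with $D\in\{W,\Pi,M\}$ and of $\cQ(\underline{C}^{(a,b)}_{m},\underline{C}^{(a,b)}_{m})$, and the same with $\overline{C}$ in place of $\underline{C}$. By Proposition \ref{prop4}(1),(2),(4),(5) every one of these $\cQ$-values is unchanged both under $(a,b)\mapsto(b,a)$ and under $(a,b)\mapsto(1-a,1-b)$, hence under their composition $r$. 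Therefore $\min_{\Delta RST}\kappa(\underline{C}^{(a,b)}_{m})=\min_{\Delta_m}\kappa(\underline{C}^{(a,b)}_{m})$ and $\max_{\Delta RST}\kappa(\overline{C}^{(a,b)}_{m})=\max_{\Delta_m}\kappa(\overline{C}^{(a,b)}_{m})$, and together with the display this gives \eqref{eq:meji}.

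I do not expect a real obstacle: the whole argument is a chain of Theorem \ref{th1}, the axiom (C3), and the symmetries recorded in Proposition \ref{prop4}. The only point needing a little care is that the $r$-invariance of $(a,b)\mapsto\kappa(\underline{C}^{(a,b)}_{m})$ must hold simultaneously for all four measures, and this is immediate once one observes that $\rho,\tau,\phi,\gamma$ are all assembled linearly from the $\cQ$-values tabulated in Proposition \ref{prop4}. Note also that only (C2), (C3) and (C5) — not (C4) — are used, so the fact that $\phi$ satisfies only the weaker axiom set is irrelevant here.
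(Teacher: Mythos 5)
Your argument is correct and follows the same route as the paper: normalise via transposition using (C2), apply Theorem \ref{th1} together with monotonicity (C3) at the point where the asymmetry $m$ is attained, note that this point lies in $\Delta RST$, and then invoke the symmetries of Proposition \ref{prop4} to replace $\Delta RST$ by $\Delta_m$. Your explicit counter-diagonal reflection $(a,b)\mapsto(1-b,1-a)$ and the compactness/continuity remark guaranteeing that the extrema are attained simply spell out steps the paper leaves implicit, so there is nothing to correct.
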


Note that either $\max$ in \eqref{eq:meji} can be replaced by $\min$ or $\min$ by $\max$ if we use relation \eqref{rel Cabc}, and Properties (C2) and (C4). For instance, we have
\begin{align}
\max_{(a,b)\in\Delta_m}\kappa(\overline{C}^{(a,b)}_{m})&=\max_{(a,b)\in\Delta_m}\left(-\kappa(\underline{C}^{(1-b,a)}_{m})\right) \nonumber \\
&=-\min_{(a,b)\in\Delta_m}\left(\kappa(\underline{C}^{(a,1-b)}_{m})\right) \nonumber \\
&=-\min_{(a,b)\in\Delta_m^{\sigma_1}}\left(\kappa(\underline{C}^{(a,b)}_{m})\right),\label{max-to-min}
\end{align}
where $\Delta_m^{\sigma_1}=\{(a,b)\in\II^2\,; a \geqslant m, b-a \geqslant 0, a+b \leqslant 1-m \}$. Observe however, that copulas $\underline{C}^{(a,b)}_{m}$ appearing in the minimum of \eqref{max-to-min} do not need to satisfy \eqref{asymmetry point}. We do not consider such copulas in the proofs of the following sections.

\section{Relations between Spearman's rho and asymmetry }\label{sec:rho}

To find the relations between Spearman's rho and asymmetry, we compute minimum and maximum from \eqref{eq:meji} for $\kappa=\rho$.

\begin{lemma}\label{lem:rho}
  \begin{description}
    \item[(a)] $\displaystyle\min_{(a,b)\in\Delta_m}\rho(\underline{C}^{(a,b)}_{m})= \rho(\underline{C}^{(m,1-m)}_{ m})=12m^3-1$
    \item[(b)] $\displaystyle\max_{(a,b)\in\Delta_m}\rho(\overline{C}^{(a,b)}_{c})= \rho(\overline{C}^{(m,2m)}_{ m})=1-36m^3$.
  \end{description}
\end{lemma}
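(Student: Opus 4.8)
The plan is to reduce the optimization of $\rho$ over the triangle $\Delta_m$ to an explicit calculus problem by first writing down a closed formula for $\rho(\underline{C}^{(a,b)}_m)$ as a function of $(a,b)\in\Delta_m$, and then showing that the function is monotone enough that its extrema over the triangle occur at vertices. By \eqref{rho} we have $\rho(\underline{C}^{(a,b)}_m)=3\cQ(\Pi,\underline{C}^{(a,b)}_m)$, and Proposition \ref{prop2}(2) gives
\[
\rho(\underline{C}^{(a,b)}_m)=6d_1(1-a-b+d_1)(1-a-b+2d_1)-1,
\]
where $d_1=d_{1,m}^{(a,b)}=W(a,b)+m$. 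On $\Delta_m$ we have $a\leqslant b$ with $a\geqslant m$, $b\leqslant 1-m$, $b-a\geqslant m$, so in particular $a+b$ can be either $\leqslant 1$ or $\geqslant 1$; I would first check whether $W(a,b)=\max\{0,a+b-1\}$ contributes, i.e. split $\Delta_m$ along the counter-diagonal $a+b=1$. If $a+b\leqslant 1$ then $d_1=m$ is constant, so $\rho(\underline{C}^{(a,b)}_m)=6m(1-a-b+m)(1-a-b+2m)-1$, which depends on $(a,b)$ only through $s:=a+b$ and is clearly strictly decreasing in $s$ on the relevant range (both factors $1-s+m$ and $1-s+2m$ are nonnegative and decreasing). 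Hence on that part the minimum of $\rho$ is attained where $s=a+b$ is largest, which on $\Delta_m\cap\{a+b\le1\}$ is along the segment $a+b=1$; there $d_1=m$ still and the value is $6m\cdot m\cdot 2m-1=12m^3-1$. For the part $a+b\geqslant 1$ one has $d_1=a+b-1+m$, and substituting, $1-a-b+d_1=m$ and $1-a-b+2d_1=a+b-1+2m$, giving $\rho=6(a+b-1+m)\cdot m\cdot(a+b-1+2m)-1$, which is now strictly \emph{increasing} in $a+b$; so on this part the minimum is again on $a+b=1$, value $12m^3-1$. Thus the global minimum over $\Delta_m$ is $12m^3-1$, attained exactly on the segment $a+b=1$ inside $\Delta_m$, in particular at the vertex $T=(m,1-m)$ — which is the claimed point in (a).

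For part (b), the cleanest route is to avoid recomputing anything: by \eqref{max-to-min} (with $\kappa=\rho$, which satisfies (C2) and (C4)),
\[
\max_{(a,b)\in\Delta_m}\rho(\overline{C}^{(a,b)}_m)=-\min_{(a,b)\in\Delta_m^{\sigma_1}}\rho(\underline{C}^{(a,b)}_m),
\]
where $\Delta_m^{\sigma_1}=\{(a,b):a\geqslant m,\ b\geqslant a,\ a+b\leqslant 1-m\}$. On this region $a+b\leqslant 1-m<1$, so $d_1=m$ throughout and
\[
\rho(\underline{C}^{(a,b)}_m)=6m(1-a-b+m)(1-a-b+2m)-1,
\]
which is strictly decreasing in $s=a+b$; hence its minimum over $\Delta_m^{\sigma_1}$ is attained where $s$ is maximal, i.e. on $a+b=1-m$. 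There $1-a-b+m=2m$ and $1-a-b+2m=3m$, so the minimal value is $6m\cdot 2m\cdot 3m-1=36m^3-1$, attained in particular at the vertex corresponding to $(m,1-2m)$ in $\Delta_m^{\sigma_1}$, which under the reflection corresponds to $(a,b)=(m,2m)$ for $\overline{C}$. Negating gives $\max_{(a,b)\in\Delta_m}\rho(\overline{C}^{(a,b)}_m)=1-36m^3=\rho(\overline{C}^{(m,2m)}_m)$, which is (b). (Alternatively one can argue directly from Proposition \ref{prop3}(2): $\rho(\overline{C}^{(a,b)}_m)=3\cQ(\Pi,\overline{C}^{(a,b)}_m)=1-6(a+b-2d_2)(a-d_2)(b-d_2)$ with $d_2=M(a,b)-m=a-m$ on $\Delta_m$, so $a-d_2=m$, $b-d_2=b-a+m$, $a+b-2d_2=b-a+2m$, and one maximizes $-6m(b-a+m)(b-a+2m)$, i.e. minimizes $(b-a+m)(b-a+2m)$, over $t:=b-a\in[m,1-2m]$ — clearly minimal at $t=m$, giving the value $1-36m^3$ at $b-a=m$, $a=m$, i.e. $(a,b)=(m,2m)$.)

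The only real obstacle is bookkeeping: one must correctly identify $d_1$ on the two subtriangles of $\Delta_m$ (the split along $a+b=1$), keep track of which vertex of $\Delta_m$ — among $R(m,2m)$, $S(1-2m,1-m)$, $T(m,1-m)$, and the edge midpoint $U$ — realizes each extremum, and make sure the reflection identity \eqref{max-to-min} is applied with the region $\Delta_m^{\sigma_1}$ written out correctly. None of the monotonicity claims is subtle: after the substitutions every expression is a product of two affine, nonnegative, comonotone functions of a single variable ($a+b$ or $b-a$), so strict monotonicity is immediate and the extrema sit on the boundary of the relevant polygon. I would therefore organize the write-up as: (i) the formula for $\rho(\underline{C}^{(a,b)}_m)$ from Proposition \ref{prop2}(2); (ii) the case split $a+b\lessgtr 1$ and the monotonicity-in-$(a+b)$ argument giving (a); (iii) either the reflection reduction via \eqref{max-to-min} or the direct computation via Proposition \ref{prop3}(2) giving (b); (iv) verification that the indicated points $(m,1-m)$ and $(m,2m)$ lie in $\Delta_m$ and yield the stated numerical values.
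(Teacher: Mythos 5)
Your proposal is correct, and for part (a) it is essentially the paper's argument: both start from Proposition \ref{prop2}(2) and minimize $6m(1-a-b+m)(1-a-b+2m)-1$ in the single variable $a+b$, concluding that the minimum sits on the segment $a+b=1$, in particular at $T=(m,1-m)$. One small inaccuracy: the constraints you list ($a\geqslant m$, $b\leqslant 1-m$, $b-a\geqslant m$) describe $\Delta RST$, not $\Delta_m=\Delta RUT$; on $\Delta_m$ as defined in the paper the edge $UT$ lies on the counter-diagonal, so $a+b\leqslant 1$ and $d_1=m$ throughout, making your second case ($a+b\geqslant 1$) vacuous. Since you handle that extra case correctly and its minimizer again lies on $a+b=1$ inside $\Delta_m$, this is harmless — it just proves the statement over the larger triangle, which is consistent with Proposition \ref{prop4}. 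For part (b) your lead route differs from the paper: you invoke the reflection identity \eqref{max-to-min} to convert the maximum of $\rho(\overline{C}^{(a,b)}_m)$ into a minimum of $\rho(\underline{C}^{(a,b)}_m)$ over $\Delta_m^{\sigma_1}$, whereas the paper computes directly from Proposition \ref{prop3}(2) with $d_2=a-m$ and optimizes the quadratic in $b-a$ (your parenthetical alternative is exactly that proof). The reflection route is legitimate: the identity rests only on \eqref{rel Cabc}, \eqref{rel Csymm} and Properties (C2), (C4), and the copulas $\underline{C}^{(a,b)}_m$ with $(a,b)\in\Delta_m^{\sigma_1}$ are well defined by Lemma \ref{lem1}; the paper's caveat that these copulas need not satisfy \eqref{asymmetry point} is irrelevant here, since the reduction is purely functional. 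What the direct route buys is that one never leaves the family of extremal copulas actually realizing asymmetry $m$ (which is why the authors prefer it); what your route buys is that part (b) reuses the part (a) computation verbatim, at the cost of tracking the change of region and of the optimizing point under the reflection, which you do correctly ($(m,1-2m)\in\Delta_m^{\sigma_1}$ corresponding to $(m,2m)\in\Delta_m$, giving $1-36m^3$).
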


\begin{proof}
{\bf (a): }
Recall that $\rho(C)=3\cQ(C,\Pi)$.
Take $(a,b) \in \Delta_m.$
Since $a+b \leqslant 1$, we have $W(a,b)=0$, so $d_1=m+W(a,b)=m$ and by Proposition \ref{prop2} we get that
$$\rho(\underline{C}^{(a,b)}_{m})=3 \cQ(\Pi, \underline{C}^{(a,b)}_{m})= 6m(1-a-b+m)(1-a-b+2m)-1.$$
We look at this expression as a quadratic function of $x=a+b$, with $x \in [3m,1]$. Then
$$f(x):=\rho(\underline{C}^{(a,b)}_{m})=6m (x^2- (3m+2)x + 2m^2+3m+1)-1.$$
Its minimum is attained at a point $x=\frac32 m+1 \notin [3m,1]$, hence our optimal value appears at $x=1=a+b$. Thus $\rho(\underline{C}^{(a,b)}_{m})$ takes its minimal value $f(1)=12m^3-1$ on the entire line segment $UT$, in particular at $(a,b)=(m,1-m)$.

{\bf (b):}
Since the triangle $\Delta_m$ lies above the main diagonal, we have $M(a,b)=a$, thus $d_2=M(a,b)-m=a-m$. By Proposition \ref{prop3}
$$\rho(\overline{C}^{(a,b)}_{m})=3Q (\Pi,\overline{C}^{(a,b)}_{m})=1-6(a+b-d_2)(a-d_2)(b-d_2)=1-6m(b-a+2m)(b-a+m).$$
We look at this expression as a quadratic function of $x=b-a$, with $x \in [m,1-2m]$. Then
$$g(x):=\rho(\overline{C}^{(a,b)}_{m})=1-6m (x^2 +3 m x + 2m^2).$$
Its maximum is attained at a point $x=-\frac32 m \notin [m,1-2m]$, hence our optimal value appears at $x=m=b-a$. Thus $\rho(\overline{C}^{(a,b)}_{m})$ takes its maximal value $g(m)=1-36m^3$ on the entire line segment $RS$, in particular at $(a,b)=(m,2m)$.
\end{proof}

We collect our findings in the main theorem of this section.

\begin{theorem}\label{th:rho}
Let $C\in \Cmt$ be any copula with $\mu_\infty(C)=m$. Then $$\rho(C)\in[12m^3-1,1-36m^3]$$ and any value from this interval can be attained. In particular, if copula $C$ is symmetric, then $\rho(C)$ can take any value in $[-1,1]$. If copula $C$ takes the maximal possible asymmetry $\mu_\infty(C)=\frac13$, then $\rho(C) \in [-\frac59, -\frac13]$.
\end{theorem}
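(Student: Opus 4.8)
The plan is to separate the statement into the \emph{containment} $\rho(C)\in[12m^3-1,\,1-36m^3]$ and the \emph{attainment} of every value of this interval, and then to read off the two numerical special cases by substitution. The containment needs nothing new: it is exactly the crucial Lemma \eqref{eq:meji} specialized to $\kappa=\rho$, together with the explicit evaluations $\min_{(a,b)\in\Delta_m}\rho(\underline C^{(a,b)}_m)=12m^3-1$ and $\max_{(a,b)\in\Delta_m}\rho(\overline C^{(a,b)}_m)=1-36m^3$ supplied by Lemma \ref{lem:rho}.

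The substance is attainment, and the mechanism I would use rests on two structural facts. First, $\rho$ is \emph{affine} on $\CC$, since $\rho(C)=3\cQ(C,\Pi)=12\int_{\II^2}C(u,v)\,du\,dv-3$. Second, $\mu_\infty$ is \emph{convex} on $\CC$, being a supremum over $(u,v)$ of the convex functions $C\mapsto|C(u,v)-C(v,u)|$. Now fix $(a,b)\in\Delta_m$ and form the segment $D_\lambda:=(1-\lambda)\underline C^{(a,b)}_m+\lambda\overline C^{(a,b)}_m$, $\lambda\in[0,1]$, of copulas (using convexity of $\CC$). By \eqref{d_1} and \eqref{d_2} both endpoints take the value $m$ at $(a,b)$ in the sense of \eqref{asymmetry point}, hence so does $D_\lambda$, so $\mu_\infty(D_\lambda)\geqslant m$; and since the local Fr\'echet--Hoeffding bounds realize this prescribed asymmetry as their \emph{global} maximum, i.e.\ $\mu_\infty(\underline C^{(a,b)}_m)=\mu_\infty(\overline C^{(a,b)}_m)=m$, convexity yields $\mu_\infty(D_\lambda)\leqslant m$; thus $\mu_\infty(D_\lambda)=m$ for every $\lambda$. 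Being affine, $\lambda\mapsto\rho(D_\lambda)$ moves linearly from $\rho(\underline C^{(a,b)}_m)$ to $\rho(\overline C^{(a,b)}_m)$, and therefore covers the whole interval between them (these are ordered since $\underline C^{(a,b)}_m\leqslant\overline C^{(a,b)}_m$ by Theorem \ref{th1} and $\rho$ is monotone).

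It remains to let $(a,b)$ vary so that these intervals exhaust $[12m^3-1,1-36m^3]$. The cleanest way is to use only $(a,b)=(m,1-m)$ and $(a,b)=(m,2m)$: by Propositions \ref{prop2}(2) and \ref{prop3}(2) the four endpoint values are $12m^3-1$, $1-6m+6m^2$, $12m^3-18m^2+6m-1$, $1-36m^3$, and the elementary inequalities $12m^3-1\leqslant 12m^3-18m^2+6m-1\leqslant 1-6m+6m^2\leqslant 1-36m^3$ (each a sign check of a low-degree polynomial on $[0,\tfrac13]$) show that the two subintervals overlap and that their union is exactly $[12m^3-1,1-36m^3]$. (Alternatively: $\Delta_m$ is connected and $(a,b)\mapsto\rho(\underline C^{(a,b)}_m),\ \rho(\overline C^{(a,b)}_m)$ are continuous, so $\bigcup_{(a,b)\in\Delta_m}[\rho(\underline C^{(a,b)}_m),\rho(\overline C^{(a,b)}_m)]$ is automatically an interval, whose endpoints are $12m^3-1$ and $1-36m^3$ by Lemma \ref{lem:rho}.) Either way every $t$ in the interval equals $\rho(D_\lambda)$ for suitable $(a,b)\in\Delta_m$ and $\lambda\in[0,1]$, a copula with $\mu_\infty=m$. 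The special cases are then substitutions: $m=0$ gives $[-1,1]$, while $\mu_\infty(C)\leqslant\max_{(u,v)}d_{\CC}^{*}(u,v)=\tfrac13$ for every copula (so $m=\tfrac13$ is admissible and maximal), giving $[12\cdot\tfrac1{27}-1,\,1-36\cdot\tfrac1{27}]=[-\tfrac59,-\tfrac13]$.

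The step I expect to be the real obstacle is the identity $\mu_\infty(\underline C^{(a,b)}_m)=\mu_\infty(\overline C^{(a,b)}_m)=m$ invoked above (for the corner choices of $(a,b)$): the inequality $\geqslant m$ is free from \eqref{asymmetry point}, but $\leqslant m$ --- that these flipped, respectively straight, shuffles of $M$ never have asymmetry exceeding the value $m$ they attain at $(a,b)$ --- is a genuine claim about their piecewise structure on the hexagonal active region. I would handle it by citing \cite{KoBuKoMoOm}, where the maximal asymmetry function and these local bounds originate, or else verify it directly from the closed forms $\underline C^{(a,b)}_m(u,v)=\max\{W(u,v),\,d_1-(a-u)^+-(b-v)^+\}$ and $\overline C^{(a,b)}_m(u,v)=\min\{M(u,v),\,d_2+(u-b)^++(v-a)^+\}$, where on $\Delta_m$ one has $d_1=m$ and $d_2=a-m$; a short case analysis on the position of $(u,v)$ relative to $(a,b)$ and the diagonal then bounds $C(u,v)-C(v,u)$ by $m$ (the computation for $\overline C^{(a,b)}_m$ is carried out along the same lines, not via the reflection relation \eqref{rel Cabc}, since $\sigma_1$ does not preserve $\mu_\infty$ in general).
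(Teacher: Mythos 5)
Your proposal is correct and is essentially the paper's own argument: the containment is exactly \eqref{eq:meji} combined with Lemma \ref{lem:rho}, and attainment comes from convex combinations of $\underline{C}^{(a,b)}_{m}$ and $\overline{C}^{(a,b)}_{m}$, on which $\rho$ is affine; the point you single out as the real obstacle --- that such combinations have asymmetry exactly $m$ --- is precisely what the paper asserts without detail, and your route (convexity of $\mu_\infty$ plus $\mu_\infty(\underline{C}^{(a,b)}_{m})=\mu_\infty(\overline{C}^{(a,b)}_{m})=m$, the latter by a short case check; for $\underline{C}^{(a,b)}_{m}$ it is immediate on $\Delta_m$ from $\underline{C}^{(a,b)}_{m}(u,v)\leqslant\max\{W(u,v),m\}$ and $\underline{C}^{(a,b)}_{m}(v,u)\geqslant W(u,v)$) legitimately fills that gap. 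Your two-point overlap computation is also fine, though it can be avoided by taking $(a,b)=U=\left(\tfrac{1-m}{2},\tfrac{1+m}{2}\right)$: by Lemma \ref{lem:rho} the minimum of $\rho(\underline{C}^{(a,b)}_{m})$ (attained on all of $UT$) and the maximum of $\rho(\overline{C}^{(a,b)}_{m})$ (attained on all of $RS$) are both attained at $U$, so a single segment of convex combinations already sweeps all of $[12m^3-1,1-36m^3]$.
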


\begin{proof}
It remains to be proved only that any value from the interval $[12m^3-1,1-36m^3]$ can be attained. First notice that 
a convex combination
$C = t\underline{C}^{(a,b)}_{m} + (1-t)\overline{C}^{(a,b)}_{m}$ is a copula with asymmetry $m$ for any $t \in [0, 1]$. { Since $\rho$ is polynomial in $t$ \cite[p. 1778]{EdTa} } it then follows that the image of $\rho$
on the 
 set of all copulas with asymmetry $m$ is a closed interval.
\end{proof}

Using inverse functions, the following corollary immediately follows. Figure \ref{fig-rho} shows these relations.

\begin{corollary}\label{cor:rho}
  If $\rho(C)=\rho$, then
  \[
     0 \leqslant \mu_\infty(C) \leqslant \left\{
           \begin{array}{ll}
             \sqrt[3]{\displaystyle\frac{1+\rho}{12}}; & \hbox{$-1\leqslant \rho\leqslant-\frac59$,} \vspace{1mm}\\
             \displaystyle\frac{1}{3}; & \hbox{$-\frac59\leqslant \rho\leqslant -\frac13$,} \vspace{1mm}\\
             \sqrt[3]{\displaystyle\frac{1-\rho}{36}}; & \hbox{$-\frac13\leqslant \rho\leqslant1$,}
           \end{array}
         \right.
  \]
    and the bounds are attained.
\end{corollary}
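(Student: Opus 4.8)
The plan is simply to invert the relation established in Theorem~\ref{th:rho}. That theorem tells us that a copula $C$ with $\mu_\infty(C)=m$ has $\rho(C)$ ranging over the whole closed interval $[12m^3-1,\,1-36m^3]$; hence a number $\rho$ arises as $\rho(C)$ for some copula with $\mu_\infty(C)=m$ \emph{if and only if} $12m^3-1\leqslant\rho\leqslant 1-36m^3$, subject also to the a priori constraint $0\leqslant m\leqslant\tfrac13$, since $\tfrac13$ is the largest possible value of $\mu_\infty$ (the maximum of $d^*_\CC$ in \eqref{eq:kle mes}). Consequently, for a fixed value $\rho$, the asymmetries $m$ realized by copulas with $\rho(C)=\rho$ are exactly those with $0\leqslant m\leqslant\min\bigl\{\tfrac13,\ \sqrt[3]{(1+\rho)/12},\ \sqrt[3]{(1-\rho)/36}\bigr\}$, so the corollary amounts to identifying this minimum as the stated three-branch expression.

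First I would solve the two cubic inequalities for $m$: $12m^3-1\leqslant\rho$ yields $m\leqslant\sqrt[3]{(1+\rho)/12}$, while $\rho\leqslant 1-36m^3$ yields $m\leqslant\sqrt[3]{(1-\rho)/36}$; combined with $m\leqslant\tfrac13$ this gives the minimum above. Next I would perform the three pairwise comparisons. Because $t\mapsto\sqrt[3]{t}$ is increasing, each comparison reduces to a linear inequality in $\rho$: one finds $\sqrt[3]{(1+\rho)/12}\leqslant\tfrac13\iff\rho\leqslant-\tfrac59$, $\ \sqrt[3]{(1-\rho)/36}\leqslant\tfrac13\iff\rho\geqslant-\tfrac13$, and $\sqrt[3]{(1+\rho)/12}\leqslant\sqrt[3]{(1-\rho)/36}\iff\rho\leqslant-\tfrac12$. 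Reading these off over $\rho\in[-1,1]$ gives the three branches of the corollary: the minimum is $\sqrt[3]{(1+\rho)/12}$ on $[-1,-\tfrac59]$, the constant $\tfrac13$ on $[-\tfrac59,-\tfrac13]$, and $\sqrt[3]{(1-\rho)/36}$ on $[-\tfrac13,1]$.

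Finally, to see that the bound is attained I would, for each $\rho$, take $m$ equal to the claimed upper bound and verify $\rho\in[12m^3-1,\,1-36m^3]$, so that Theorem~\ref{th:rho} furnishes a copula $C$ with $\mu_\infty(C)=m$ and $\rho(C)=\rho$. For $\rho\leqslant-\tfrac59$ the choice of $m$ makes $12m^3-1=\rho$, and the second inequality reduces to $\rho\leqslant-\tfrac12$, which holds; for $-\tfrac59\leqslant\rho\leqslant-\tfrac13$ we have $m=\tfrac13$ and the interval is precisely $[-\tfrac59,-\tfrac13]$; for $\rho\geqslant-\tfrac13$ the choice of $m$ makes $1-36m^3=\rho$, and the first inequality reduces to $\rho\geqslant-\tfrac12$, which holds.

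There is no genuine obstacle; the only step needing care is the bookkeeping of the three-way comparison over $[-1,1]$, in particular noticing that the crossover $\rho=-\tfrac12$ of the two cube roots lies inside the flat piece where the minimum equals $\tfrac13$ and therefore does not appear in the final formula. Monotonicity of the cube root makes every comparison a single elementary inequality, so the argument is entirely routine once set up this way.
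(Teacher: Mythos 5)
Your proposal is correct and follows essentially the same route as the paper, which simply inverts the bounds of Theorem \ref{th:rho} (``using inverse functions''); your pairwise comparisons at $\rho=-\tfrac59$, $-\tfrac12$, $-\tfrac13$ and the attainment check are exactly the bookkeeping the paper leaves implicit.
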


\begin{figure}[h]
            \includegraphics[width=4cm]{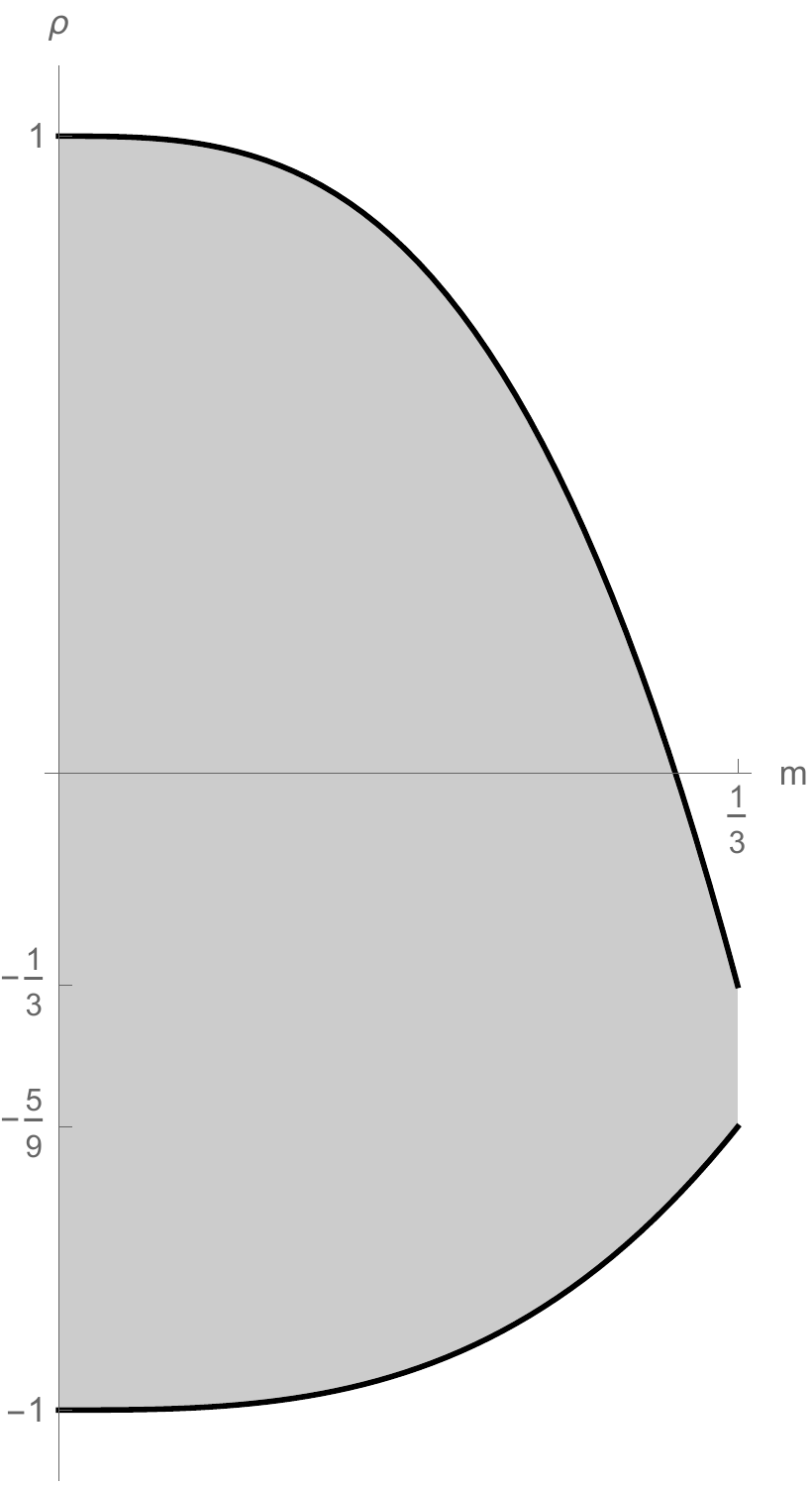} \hfil \includegraphics[width=6cm]{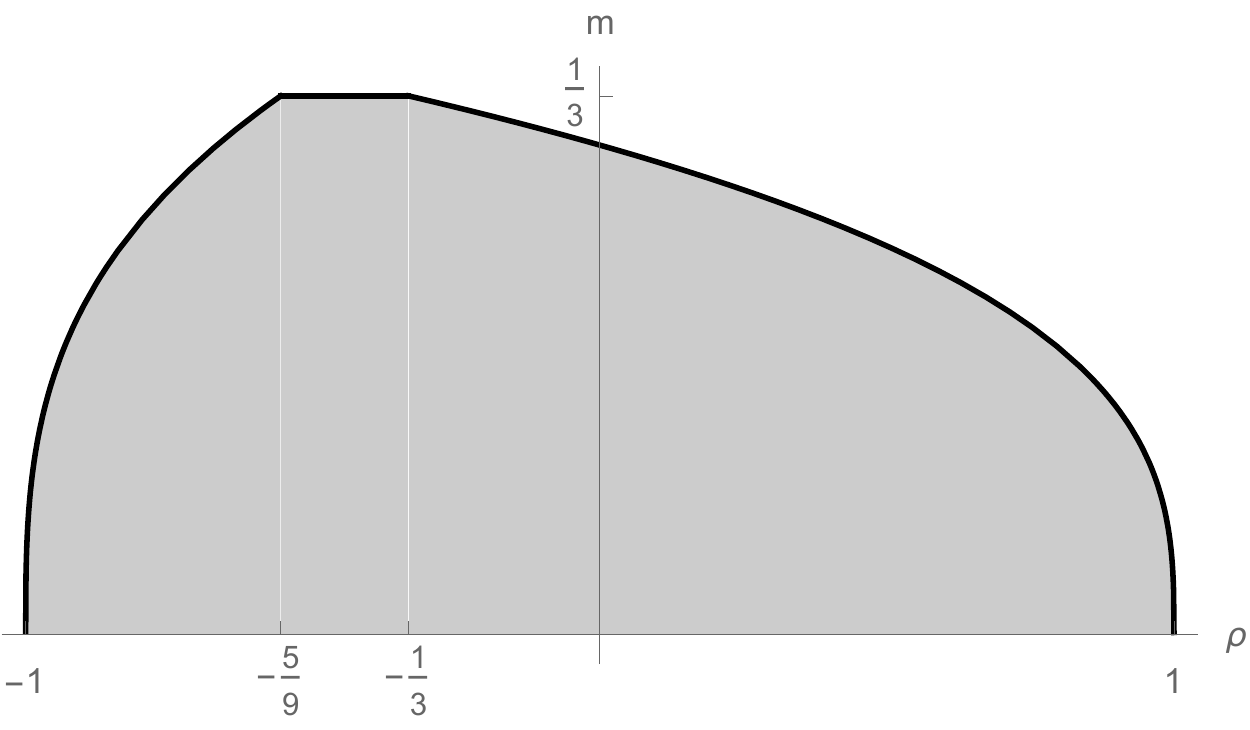}
            \caption{Relations between Spearman's rho and asymmetry }\label{fig-rho}
\end{figure}

\section{Relations between Kendall's tau and asymmetry }\label{sec:tau}

To find the relations between Kendall's tau and asymmetry, we compute minimum and maximum from \eqref{eq:meji} for $\kappa=\tau$.

\begin{lemma}\label{lem:tau}
  \begin{description}
    \item[(a)] $\displaystyle\min_{(a,b)\in\Delta_m}\tau(\underline{C}^{(a,b)}_{m})= {\tau(\underline{C}^{(m,1-m)}_{m})}=4m^2-1$
    \item[(b)] $\displaystyle\max_{(a,b)\in\Delta_m}\tau(\overline{C}^{(a,b)}_{m})= \tau(\overline{C}^{(m,2m)}_{m}) =1-8m^2$.
  \end{description}
\end{lemma}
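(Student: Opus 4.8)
The plan is to mirror the proof of Lemma~\ref{lem:rho}, now using $\tau(C)=\cQ(C,C)$ together with item~(4) of Propositions~\ref{prop2} and~\ref{prop3}, which already supply closed forms for $\cQ(\underline{C}^{(a,b)}_{c},\underline{C}^{(a,b)}_{c})$ and $\cQ(\overline{C}^{(a,b)}_{c},\overline{C}^{(a,b)}_{c})$. This case is in fact easier than the Spearman's rho case, since the resulting objective functions turn out to be affine, rather than quadratic, in the relevant variable.

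For part~(a): on $\Delta_m$ one has $a+b\leqslant 1$, hence $W(a,b)=0$ and $d_1=m+W(a,b)=m$. Proposition~\ref{prop2}(4) then gives
\[
\tau(\underline{C}^{(a,b)}_{m})=\cQ(\underline{C}^{(a,b)}_{m},\underline{C}^{(a,b)}_{m})=4m(1-a-b+m)-1,
\]
which, regarded as a function of $x=a+b$, is affine and (for $m>0$) strictly decreasing. On $\Delta_m=\Delta RUT$ the quantity $a+b$ ranges over $[3m,1]$: evaluating the affine function $a+b$ at the vertices $R(m,2m)$, $U(\tfrac12(1-m),\tfrac12(1+m))$, $T(m,1-m)$ gives $3m$, $1$, $1$, so its maximum $1$ is attained along the whole edge $UT$. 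Hence $\tau(\underline{C}^{(a,b)}_{m})$ attains its minimum $4m^{2}-1$ along $UT$, in particular at $(a,b)=(m,1-m)$.

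For part~(b): the triangle $\Delta_m$ lies above the main diagonal, so $M(a,b)=a$ and $d_2=M(a,b)-m=a-m$; thus $a-d_2=m$ and $b-d_2=b-a+m$, and Proposition~\ref{prop3}(4) yields
\[
\tau(\overline{C}^{(a,b)}_{m})=1-4(a-d_2)(b-d_2)=1-4m(b-a+m).
\]
As a function of $y=b-a$ this is affine and (for $m>0$) strictly decreasing, and over $\Delta_m$ the quantity $b-a$ ranges over $[m,1-2m]$ (with values $m$, $m$, $1-2m$ at $R$, $U$, $T$ respectively), its minimum $m$ being attained along the edge $RU$. Therefore $\tau(\overline{C}^{(a,b)}_{m})$ attains its maximum $1-8m^{2}$ along $RU$, in particular at $(a,b)=(m,2m)$.

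The only step requiring care is correctly identifying the feasible ranges of $a+b$ and $b-a$ on $\Delta_m$; since both are affine functions of $(a,b)$, it suffices to evaluate them at the three vertices of $\Delta RUT$, and beyond the elementary monotonicity observations above there is no genuine optimization obstacle — indeed this is why the present lemma is more routine than Lemma~\ref{lem:rho}.
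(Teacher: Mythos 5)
Your proof is correct and follows essentially the same route as the paper: apply Proposition~\ref{prop2}(4) and Proposition~\ref{prop3}(4) with $d_1=m$ and $d_2=a-m$ on $\Delta_m$, observe the resulting expressions are affine and decreasing in $a+b$ and $b-a$ respectively, and optimize over the ranges $[3m,1]$ and $[m,1-2m]$. Your identification of the maximizing set as the edge $RU$ (rather than the paper's loosely stated $RS$) is in fact the more precise statement within $\Delta_m$, consistent with the summary table in the paper's conclusion.
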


\begin{proof}
{\bf (a): }
Recall that $\tau(C)=\cQ(C,C)$. For $(a,b) \in \Delta_m$
we have by Proposition \ref{prop2} that
$$\tau(\underline{C}^{(a,b)}_{m})= 4m(1-a-b+m)-1.$$
This is a linear function of $x=a+b$ for $x \in [3m,1]$, which takes its minimum at {$x=1$}
, which corresponds to 
{$a+b=1$}.
Thus $\tau(\underline{C}^{(a,b)}_{m})$ takes its minimal value 
{$4m^2-1$ on the entire line segment $UT$, in particular at
$(a,b) = (m,1-m)$.}

{\bf (b):}
Since the triangle $\Delta_m$ lies above the main diagonal, we have $d_2 = a-m$. We use Proposition \ref{prop3} to obtain that
$$\tau(\overline{C}^{(a,b)}_{m})=1-4(a-d_2)(b-d_2)=1-4m(b-a+m).$$
This a linear function of $x=b-a$ for $x \in [m,1-2m]$, which takes its maximum at $x=m$, which corresponds to 
{$b=a+m$}.
Thus $\tau(\overline{C}^{(a,b)}_{m})$ takes its maximal value 
{on the entire line segment $RS$}, in particular at 
{$(a,b) = (m,2m)$}, where $\tau(\overline{C}^{(m,2m)}_{m})=1-8m^2$.
\end{proof}

Using the same argument as in section \ref{sec:rho}, we get the main theorem of this section and its corollary.
Figure \ref{fig-tau} shows the relations between Kendall's tau and asymmetry.

\begin{theorem}\label{th:tau}
Let $C\in \Cmt$ be any copula with $\mu_\infty(C)=m$. Then $$\tau(C)\in[4m^2-1,1-8m^2]$$ and any value from this interval can be attained. In particular, if copula $C$ is symmetric, then $\rho(C)$ can take any value in $[-1,1]$. If copula $C$ takes the maximal possible asymmetry $\mu_\infty(C)=\frac13$, then $\rho(C) \in [-\frac59, \frac19]$.
\end{theorem}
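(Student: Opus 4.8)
The plan is to mirror the proof of Theorem~\ref{th:rho} step for step, using Lemma~\ref{lem:tau} in place of Lemma~\ref{lem:rho}. The containment $\tau(C)\in[4m^2-1,\,1-8m^2]$ needs no new work: it is exactly \eqref{eq:meji} specialized to $\kappa=\tau$, combined with the minimum and maximum evaluated in Lemma~\ref{lem:tau}. So the whole task reduces to showing that \emph{every} value in this interval is realized by some copula with $\mu_\infty=m$.

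For the realizability I would not vary the base point at all but fix it at the single vertex $U=\bigl(\tfrac{1-m}{2},\tfrac{1+m}{2}\bigr)$ of $\Delta_m$, which is also the midpoint of the edge $RS$. Since $U$ lies on the segment $UT$ (on which $a+b=1$) and on the segment $RS$ (on which $b-a=m$), Lemma~\ref{lem:tau} tells us that $\tau(\underline{C}^{U}_{m})=4m^2-1$ and $\tau(\overline{C}^{U}_{m})=1-8m^2$, i.e.\ the two extreme values are attained by the two local Fr\'echet--Hoeffding bounds anchored at the same point. I would then consider the segment of copulas $D_t=t\,\underline{C}^{U}_{m}+(1-t)\,\overline{C}^{U}_{m}$, $t\in[0,1]$ (these are copulas by convexity of $\CC$). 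Because $\underline{C}^{U}_{m}$ and $\overline{C}^{U}_{m}$ both satisfy \eqref{asymmetry point} at $U$ with value $m$, so does $D_t$, whence $\mu_\infty(D_t)\ge m$; and the triangle inequality $|D_t(u,v)-D_t(v,u)|\le t\,\mu_\infty(\underline{C}^{U}_{m})+(1-t)\,\mu_\infty(\overline{C}^{U}_{m})=m$ gives $\mu_\infty(D_t)\le m$, so $\mu_\infty(D_t)=m$ for all $t$. Finally $t\mapsto\tau(D_t)=\cQ(D_t,D_t)$ is polynomial (in fact quadratic) in $t$, hence continuous, and it runs from $\tau(D_1)=4m^2-1$ to $\tau(D_0)=1-8m^2$; by the intermediate value theorem it hits every value of $[4m^2-1,\,1-8m^2]$, as desired.

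The only delicate point --- the part I expect to need a reference rather than a one-line argument --- is the identity $\mu_\infty(\underline{C}^{U}_{m})=\mu_\infty(\overline{C}^{U}_{m})=m$ invoked above: the bounds ``$\ge m$'' are immediate from \eqref{asymmetry point}, but the bounds ``$\le m$'', saying that these local Fr\'echet--Hoeffding copulas do not exceed their prescribed asymmetry anywhere on $\II^2$, are part of the construction in \cite[\S2]{KoBuKoMoOm} (here $c=m\le d_\CC^*(U)=m$, so $U$ sits on the boundary of the admissible region and Theorem~\ref{th1} still applies). The closing ``in particular'' statements are then just the substitutions $m=0$, giving $\tau(C)\in[-1,1]$ --- an interval already swept out by symmetric copulas, e.g.\ convex combinations of $W$, $\Pi$ and $M$ --- and $m=\tfrac13$, giving $\tau(C)\in[4\cdot\tfrac19-1,\;1-8\cdot\tfrac19]=[-\tfrac59,\tfrac19]$. (In both of those sentences of the statement the symbol ``$\rho(C)$'' should read ``$\tau(C)$''.)
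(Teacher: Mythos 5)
Your proposal is correct and takes essentially the same route as the paper: the containment is \eqref{eq:meji} combined with Lemma \ref{lem:tau}, and attainability comes from a convex combination $t\,\underline{C}+(1-t)\,\overline{C}$ of the local Fr\'echet--Hoeffding bounds along which $\tau$ is polynomial (quadratic) in $t$, which is exactly the argument of Theorem \ref{th:rho} that the paper invokes verbatim for $\tau$. Your anchoring of both bounds at the single vertex $U$, where the extremal segments $UT$ and $RS$ meet so that both extreme values $4m^2-1$ and $1-8m^2$ are attained simultaneously, is only a mild (and slightly cleaner) sharpening of that same argument, and the typo you flag ($\rho$ in place of $\tau$ in the final sentences) is indeed present in the statement.
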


\begin{corollary}\label{cor:tau}
  If $\tau(C)=\tau$, then
  \[
     0 \leqslant \mu_\infty(C) \leqslant \left\{
           \begin{array}{ll}
             \sqrt{\displaystyle\frac{1+\tau}{4}}; & \hbox{$-1\leqslant \tau \leqslant-\frac59$,} \vspace{1mm}\\
             \displaystyle\frac{1}{3}; & \hbox{$-\frac59\leqslant \tau\leqslant \frac19$,} \vspace{1mm}\\
             \sqrt{\displaystyle\frac{1-\tau}{8}}; & \hbox{$\frac19\leqslant \tau \leqslant1$,}
           \end{array}
         \right.
  \]
    and the bounds are attained.
\end{corollary}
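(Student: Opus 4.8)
The plan is to obtain the corollary by inverting the two-sided estimate of Theorem~\ref{th:tau} and intersecting it with the universal bound $\mu_\infty(C)\leqslant\tfrac13$. The latter follows from \eqref{eq:kle mes}, since for any fixed $C$ we have $\mu_\infty(C)=\max_{(u,v)\in\II^2}|C(u,v)-C(v,u)|\leqslant\max_{(u,v)\in\II^2}d_\CC^*(u,v)=\tfrac13$. So, writing $m=\mu_\infty(C)$ and assuming $\tau(C)=\tau$, Theorem~\ref{th:tau} gives $4m^2-1\leqslant\tau\leqslant1-8m^2$, i.e.\ $m\leqslant\sqrt{(1+\tau)/4}$ and $m\leqslant\sqrt{(1-\tau)/8}$; combined with $m\leqslant\tfrac13$ this yields
\[
  \mu_\infty(C)\leqslant\min\Bigl\{\sqrt{\tfrac{1+\tau}{4}},\ \sqrt{\tfrac{1-\tau}{8}},\ \tfrac13\Bigr\}.
\]

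Next I would determine which of the three quantities realizes this minimum as a function of $\tau\in[-1,1]$. Squaring gives the elementary equivalences $\sqrt{(1+\tau)/4}\leqslant\tfrac13\iff\tau\leqslant-\tfrac59$, $\ \sqrt{(1-\tau)/8}\leqslant\tfrac13\iff\tau\geqslant\tfrac19$, and $\sqrt{(1+\tau)/4}\leqslant\sqrt{(1-\tau)/8}\iff\tau\leqslant-\tfrac13$. Hence on $[-1,-\tfrac59]$ the minimum is $\sqrt{(1+\tau)/4}$, on $[-\tfrac59,\tfrac19]$ it is $\tfrac13$ (both radicals being $\geqslant\tfrac13$ there), and on $[\tfrac19,1]$ it is $\sqrt{(1-\tau)/8}$; this is exactly the piecewise bound in the statement. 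Note in passing that $[4m^2-1,1-8m^2]$ is nonempty already for $m\leqslant 1/\sqrt{6}>\tfrac13$, so the cutoff $\tfrac13$ is genuinely imposed by the asymmetry constraint and not by Theorem~\ref{th:tau} alone.

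Finally I would check attainability. By Theorem~\ref{th:tau}, for the extremal value $m_0$ of $\mu_\infty$ the bound is attained as soon as $\tau\in[4m_0^2-1,\,1-8m_0^2]$, since then some copula has $\mu_\infty=m_0$ and Kendall's tau equal to $\tau$. For $\tau\leqslant-\tfrac59$ we get $m_0=\sqrt{(1+\tau)/4}$, so $4m_0^2-1=\tau$ and $1-8m_0^2=-1-2\tau\geqslant\tau$ because $\tau\leqslant-\tfrac13$; for $-\tfrac59\leqslant\tau\leqslant\tfrac19$ we get $m_0=\tfrac13$ and $[4m_0^2-1,1-8m_0^2]=[-\tfrac59,\tfrac19]\ni\tau$; for $\tau\geqslant\tfrac19$ we get $m_0=\sqrt{(1-\tau)/8}$, so $1-8m_0^2=\tau$ and $4m_0^2-1=\tfrac{-1-\tau}{2}\leqslant\tau$ because $\tau\geqslant-\tfrac13$. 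In every regime $\tau$ lies in the admissible range for $m_0$, so Theorem~\ref{th:tau} supplies a copula realizing the bound. There is no serious obstacle here: the whole argument is the inversion of Theorem~\ref{th:tau} plus bookkeeping of the three threshold comparisons, the only mild care being their consistency at the regime boundaries $\tau=-\tfrac59,-\tfrac13,\tfrac19$.
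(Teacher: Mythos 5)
Your proposal is correct and follows essentially the same route as the paper, which obtains the corollary simply by inverting Theorem~\ref{th:tau} (``using inverse functions'') together with the universal cap $\mu_\infty\leqslant\frac13$ coming from \eqref{eq:kle mes}; your threshold computations at $\tau=-\frac59,-\frac13,\frac19$ and the attainability check via the ``any value is attained'' clause of Theorem~\ref{th:tau} are exactly the bookkeeping the paper leaves implicit.
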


\begin{figure}[h]
            \includegraphics[width=4cm]{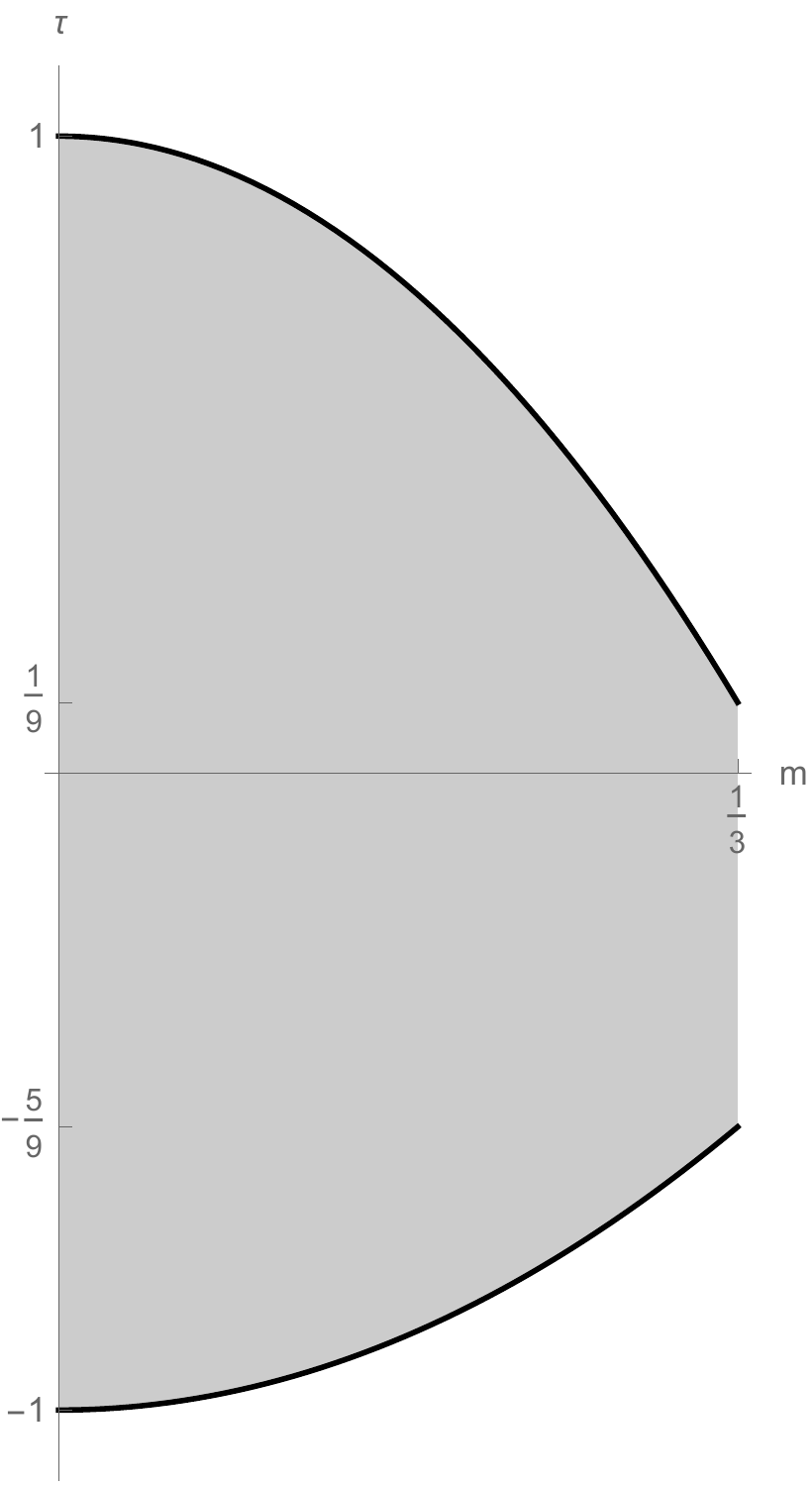} \hfil \includegraphics[width=6cm]{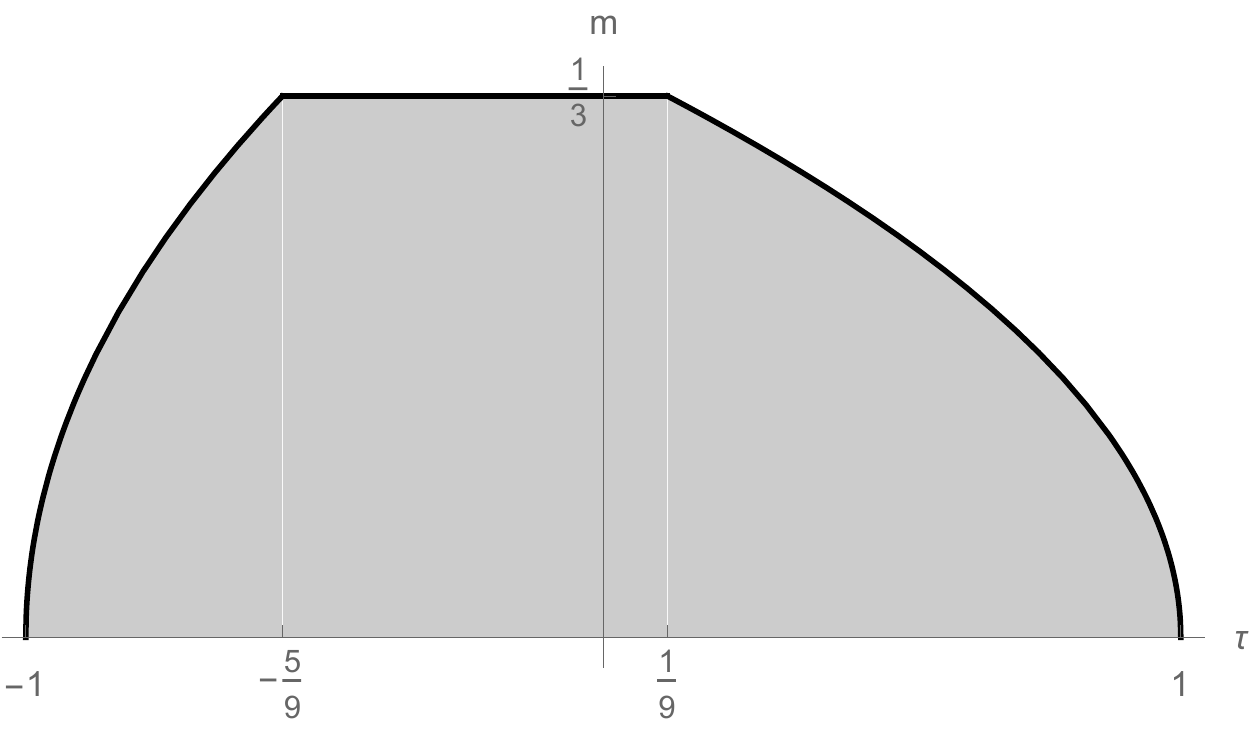}
            \caption{Relations between Kendall's tau and asymmetry }\label{fig-tau}
\end{figure}

\section{Relations between Spearman's footrule  and asymmetry }\label{sec:phi}

To find the relations between Spearman's footrule and asymmetry, we compute minimum and maximum from \eqref{eq:meji} for $\kappa=\phi$. Recall that
$$\phi(C)=\textstyle\frac12\left(3\cQ(M,C)-1\right).$$

\begin{lemma}\label{lem:footrule}
  \begin{description}
    \item[(a)] $\displaystyle\min_{(a,b)\in\Delta_m}\phi(\underline{C}^{(a,b)}_{m})= \phi(\underline{C}^{(m,1-m)}_{ m})=\left\{ \begin{array}{ll}
        -\frac12;    & \text{if } 0 \leqslant m \leqslant \frac14, \vspace{1mm}\\
        24m^2-12m+1; & \text{if } \frac14 \leqslant m \leqslant \frac13. \\
                \end{array} \right.$
    \item[(b)] $\displaystyle\max_{(a,b)\in\Delta_m}\phi(\overline{C}^{(a,b)}_{m})= \phi(\overline{C}^{(m,2m)}_{m})=1-12m^2$.
  \end{description}
\end{lemma}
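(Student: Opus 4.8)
The plan is to reduce both statements to the values of $\cQ(M,\cdot)$ at the local bounds, which are recorded in Propositions~\ref{prop2}(3) and \ref{prop3}(3), using that $\phi(C)=\tfrac12\bigl(3\cQ(M,C)-1\bigr)$ is an increasing affine function of $\cQ(M,C)$; hence minimizing (resp.\ maximizing) $\phi$ over $\Delta_m$ is the same as minimizing (resp.\ maximizing) $\cQ(M,\cdot)$. Throughout one uses that on $\Delta_m$ one has $a\le b$ and $a+b\le 1$, so $W(a,b)=0$ and $M(a,b)=a$, whence $d_1=m$ and $d_2=a-m$.

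For part (b): substituting $d_2=a-m$ into Proposition~\ref{prop3}(3) gives $\cQ(M,\overline{C}^{(a,b)}_{m})=1-4m(b-a+m)$, so $\phi(\overline{C}^{(a,b)}_{m})=1-6m(b-a+m)$ is affine and non-increasing in $x=b-a$. On $\Delta_m$ one has $x\ge m$, so the maximum is attained along the edge $b-a=m$ (which is the edge $RU$ of $\Delta_m=\Delta RUT$), in particular at $(a,b)=(m,2m)=R$, with value $1-12m^2$.

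For part (a) one must handle the nine branches of Proposition~\ref{prop2}(3) with $d_1=m$. On $\Delta_m$ we have $b-a\ge m$, i.e.\ $a\le b-m$; this excludes, off the boundary $b-a=m$, every branch except the three carrying that hypothesis, with the branch $b\ge m+\tfrac12$ relevant only for $m\le\tfrac14$ and the corner branch $a\ge\tfrac12(1+m)$ only for $m\le\tfrac15$. I would split into two cases. If $0\le m\le\tfrac14$: since $\underline{C}^{(a,b)}_m\ge W$, Property (Q2) together with $\cQ(M,W)=0$ gives $\cQ(M,\underline{C}^{(a,b)}_m)\ge 0$, i.e.\ $\phi(\underline{C}^{(a,b)}_m)\ge-\tfrac12$; and because $b=1-m\ge m+\tfrac12$ at $T=(m,1-m)$, the first branch of Proposition~\ref{prop2}(3) yields $\cQ(M,\underline{C}^{(m,1-m)}_m)=0$, so the minimum $-\tfrac12$ is attained there. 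If $\tfrac14\le m\le\tfrac13$: then $m>\tfrac15$, so only the branches $b\le\tfrac12(1+m)$ (value $m(2+3m-4b)$) and $\tfrac12(1+m)\le b\le m+\tfrac12$ (value $(2m+1-2b)^2$) occur on $\Delta_m$, and together they form a continuous function of $b$ alone that is non-increasing in $b$; hence $\phi$ is minimized where $b$ is largest on $\Delta_m$, namely at the vertex $T=(m,1-m)$, and since then $\tfrac12(1+m)\le 1-m\le m+\tfrac12$ one gets $\cQ(M,\underline{C}^{(m,1-m)}_m)=(4m-1)^2$, so $\phi(\underline{C}^{(m,1-m)}_m)=\tfrac12\bigl(3(4m-1)^2-1\bigr)=24m^2-12m+1$.

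The main obstacle is the bookkeeping in part (a): identifying which of the nine branches of Proposition~\ref{prop2}(3) are active on $\Delta_m$ as $m$ ranges over $[0,\tfrac13]$, checking that the active ones depend on $b$ alone and glue into a single non-increasing function, and locating the threshold $m=\tfrac14$ at which the largest admissible value $b=1-m$ crosses the knot $b=m+\tfrac12$ of that function. Part (b), and the observation that the extrema occur at a vertex or edge of $\Delta_m$, are then routine.
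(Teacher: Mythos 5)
Your proposal is correct and follows essentially the same route as the paper: reduce $\phi$ to $\cQ(M,\cdot)$, keep only the branches of Propositions \ref{prop2}(3) and \ref{prop3}(3) that are active on $\Delta_m$, and use monotonicity in $b$ (resp.\ $b-a$) to locate the extremes at $T$ and along the edge $b-a=m$, in particular at $R(m,2m)$. Your only deviations are harmless: the shortcut $\cQ(M,C)\geqslant\cQ(M,W)=0$ (hence $\phi\geqslant-\tfrac12$) for $m\leqslant\tfrac14$ replaces the paper's monotonicity argument on that range, and your side remark that the branch with $a\geqslant\tfrac12(1+m)$ can occur for $m\leqslant\tfrac15$ really pertains to $\Delta RST$ rather than $\Delta_m$ (where $a\leqslant\tfrac12(1-m)$, so it never occurs), but it is not used in the argument.
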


\begin{proof}
{\bf (a): }
For $(a,b) \in \Delta_m$ in the expression for $\cQ(M, \underline{C}^{(a,b)}_{m})$ only three out of nine cases are possible, namely, the first, the second and the fourth.
In the third case, when the vertex $P$ of the hexagon is below the main diagonal and $R$ is above it (see Figure \ref{sestkotnik}),
the central point $(a, b)$ is above the counter-diagonal. In the remaining four cases we have the central point below the main diagonal. Since in the triangle $\Delta_m$, we have $d_1 = W(a, b) +m$ and $b-a \geqslant m$ the expression for  $\cQ(M, \underline{C}^{(a,b)}_{m})$
simplifies to
\begin{equation} \label{cQ(M,C1)}
\cQ(M, \underline{C}^{(a,b)}_{m}) = \left\{ \begin{array}{ll}
        0;                        & \text{if } m + \frac12 \leqslant b, \vspace{1mm}\\
        (2m+1-2b)^2;              & \text{if } \frac12(1+m) \leqslant b \leqslant m + \frac12, \vspace{1mm}\\
        m(2+3m-4b);               & \text{if } b \leqslant \frac12(1+m).  \\
                \end{array} \right.
\end{equation}
For fixed $m$ it depends only on $b$, and we need to minimize it for $b \in [2m, 1-m]$.
Notice that for big values of $m$ (i.e. close to $\frac13$) not all of three cases are possible.
Regardless of that, the left part, when $b \leqslant \frac12(1+m)$, is linear and decreasing in $b$, the middle part,
when $\frac12(1+m) \leqslant b \leqslant m + \frac12$ is quadratic and also decreasing in $b$, since
$\frac{d}{db}(2m+1-2b)^2 = -4(2m+1-2b) \leqslant 0$, and the right part is constant. So, $\phi(\underline{C}^{(a,b)}_{m})$ takes its minimal value
for $b= 1-m$, i.e. at the vertex $T$, where
$$\phi(\underline{C}^{(m,1-m)}_{m}) = -\textstyle\frac12 + \frac32 \left\{ \begin{array}{ll}
        0;        & \text{if } 0 \leqslant m \leqslant \frac14, \vspace{1mm}\\
        (4m-1)^2; & \text{if } \frac14 \leqslant m \leqslant \frac13, \\
                \end{array} \right.=\left\{ \begin{array}{ll}
        -\frac12;    & \text{if } 0 \leqslant m \leqslant \frac14, \vspace{1mm}\\
        24m^2-12m+1; & \text{if } \frac14 \leqslant m \leqslant \frac13. \\
                \end{array} \right.
                $$
{\bf (b):}
Notice that $\cQ(M, \overline{C}^{(a,b)}_{c}) = \cQ(\overline{C}^{(a,b)}_{c}, \overline{C}^{(a,b)}_{c})$, so $\phi(\overline{C}^{(a,b)}_{m}) = \frac12\left(3\tau(\overline{C}^{(a,b)}_{m})-1\right)$. Thus $\phi(\overline{C}^{(a,b)}_{m})$ takes its maximal value at the same 
{points} as $\tau(\overline{C}^{(a,b)}_{m})$, i.e. 
{the line segment $RS$, in particular at $(2m,m)$, } where $\phi(\overline{C}^{(m,2m)}_{m})=1-12m^2$.
\end{proof}

There is an intuition lying behind our computations for the case $\underline{C}^{(a,b)}_{c}$ to follow. Our copula is basically $W$ with a bump.
Since $\cQ(M,\underline{C}^{(a,b)}_{c}) = \cQ(\underline{C}^{(a,b)}_{c},M)$, we need to minimize the integral of $\underline{C}^{(a,b)}_{c}$ over the main diagonal. So, we need to push the bump as far towards northwest as possible.

The main theorem and its corollary now follow. Figure \ref{fig-phi} shows the relations between Spearman's footrule and asymmetry.

\begin{theorem}\label{th:phi}
Let $C\in \Cmt$ be any copula with $\mu_\infty(C)=m$. Then
\[
1-12m^2\geqslant\phi(C)\geqslant\left\{
                                  \begin{array}{ll}
                                    -\frac12; & \hbox{if $0 \leqslant m\leqslant\frac14$,} \vspace{1mm}\\
                                    24m^2-12m+1; & \hbox{if $\frac14 \leqslant m\leqslant \frac13$,}
                                  \end{array}
                                \right.
\]
and the bounds are attained. In particular, if copula $C$ is symmetric, then $\phi(C)$ can take any value in $[-\frac12,1]$. If copula $C$ takes the maximal possible asymmetry $\mu_\infty(C)=\frac13$, then $\phi(C) = -\frac13$.
\end{theorem}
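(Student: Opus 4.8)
The plan is to combine Lemma \ref{lem:footrule} with the same ``interval-filling'' argument already used for Spearman's rho in Theorem \ref{th:rho} and for Kendall's tau in Theorem \ref{th:tau}. First I would invoke the Lemma preceding Section \ref{sec:rho} (the ``crucial lemma'' giving \eqref{eq:meji}) specialised to $\kappa = \phi$: for any copula $C$ with $\mu_\infty(C) = m$ we have
\[
\phi(C) \in \left[\min_{(a,b)\in\Delta_m}\phi(\underline{C}^{(a,b)}_{m}),\ \max_{(a,b)\in\Delta_m}\phi(\overline{C}^{(a,b)}_{m})\right].
\]
By Lemma \ref{lem:footrule}(a) the left endpoint equals $-\frac12$ when $0\leqslant m\leqslant\frac14$ and $24m^2-12m+1$ when $\frac14\leqslant m\leqslant\frac13$, while by Lemma \ref{lem:footrule}(b) the right endpoint equals $1-12m^2$. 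This establishes the stated two-sided bound; it remains only to show every value in the interval is attained and to read off the two special cases.

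For the attainability part I would copy the convex-combination argument verbatim: for a fixed $(a,b)$ realising the relevant extremum, and for $t\in[0,1]$, the copula $C_t = t\,\underline{C}^{(a,b)}_{m} + (1-t)\,\overline{C}^{(a,b)}_{m}$ is a copula (convexity of $\CC$) that still satisfies \eqref{asymmetry point} with $c=m$, hence has $\mu_\infty(C_t)=m$. Since $\phi(C)=\frac12(3\cQ(M,C)-1)$ and $\cQ(M,C_t)$ is \emph{linear} in $t$ (indeed $\cQ(M,\cdot)$ is affine because $\cQ$ is linear in its second argument through the integral in \eqref{concordance}), the map $t\mapsto\phi(C_t)$ is continuous — in fact affine — so its image is the whole closed interval between $\phi(\underline{C}^{(a,b)}_{m})$ and $\phi(\overline{C}^{(a,b)}_{m})$. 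Taking $(a,b)$ to be the point where the lower bound is attained gives the left endpoint, and $(a,b)$ where the upper bound is attained gives the right endpoint; monotonicity of $\phi$ together with the Lemma shows nothing outside the interval occurs, so the interval is exactly the range.

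Finally, for $m=0$ the interval is $[-\frac12,1]$, giving the first ``in particular'' claim; and for $m=\frac13$ the lower bound $24m^2-12m+1$ evaluates to $24\cdot\frac19 - 4 + 1 = \frac{8}{3}-3 = -\frac13$ and the upper bound $1-12m^2 = 1-\frac{4}{3} = -\frac13$, so the interval degenerates to the single point $-\frac13$, giving the second claim. I expect the only mildly delicate point to be the justification that $t\mapsto\cQ(M,C_t)$ is genuinely affine (and hence that the image is a full interval rather than merely connected); this is immediate from the integral representation \eqref{concordance} but should be stated, since for $\phi$ — unlike $\rho$ — we do not get it for free from the cited polynomiality in $t$ of \cite[p.~1778]{EdTa}, though the same reference covers it as well. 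Everything else is a direct transcription of the proofs of Theorems \ref{th:rho} and \ref{th:tau}.
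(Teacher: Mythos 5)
Your proposal is correct and follows essentially the same route as the paper: the paper derives Theorem \ref{th:phi} directly from Lemma \ref{lem:footrule} via the bound \eqref{eq:meji} and the same convex-combination attainability argument used for Theorems \ref{th:rho} and \ref{th:tau}. Your extra remark that $\cQ(M,\cdot)$ is affine in $t$ (so $\phi(C_t)$ is affine, not merely polynomial) is a harmless sharpening of the paper's citation of \cite{EdTa}, and your evaluations at $m=0$ and $m=\tfrac13$ match the stated special cases.
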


\begin{corollary}\label{cor:phi}
  If $\phi(C)=\phi$, then
  \[
    0\leqslant\mu_\infty(C)\leqslant\left\{
           \begin{array}{ll}
             \displaystyle\frac{3+\sqrt{3+6\phi}}{12}; & \hbox{if $\frac12 \leqslant\phi\leqslant -\frac13$,} \vspace{1mm}\\
             \displaystyle\sqrt{\frac{1-\phi}{12}}; & \hbox{if $-\frac13\leqslant \phi \leqslant 1$,}
           \end{array}
         \right.
  \]
and the bounds are attained.
\end{corollary}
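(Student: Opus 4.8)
The plan is to invert the two-sided estimate of Theorem~\ref{th:phi}. Write $L(m)=-\tfrac12$ for $m\in[0,\tfrac14]$ and $L(m)=24m^2-12m+1$ for $m\in[\tfrac14,\tfrac13]$, so that Theorem~\ref{th:phi} says precisely that $\phi(C)\in[L(m),1-12m^2]$ whenever $\mu_\infty(C)=m$, and that every value of this interval is attained. Hence, for a prescribed value $\phi=\phi(C)$, the set of admissible asymmetries is exactly $A(\phi)=\{\,m\in[0,\tfrac13]:L(m)\leqslant\phi\leqslant 1-12m^2\,\}$, and the whole statement reduces to computing $\sup A(\phi)$ and checking that this supremum actually belongs to $A(\phi)$ (which, once the supremum is located, is immediate: at that $m$ one has $\phi\in[L(m),1-12m^2]$, so Theorem~\ref{th:phi} produces a copula realizing the pair $(m,\phi)$).

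First I would turn each of the two defining inequalities of $A(\phi)$ into an explicit bound on $m$. The condition $\phi\leqslant 1-12m^2$ is equivalent to $m\leqslant\sqrt{(1-\phi)/12}$. The condition $L(m)\leqslant\phi$ is automatic on $[0,\tfrac14]$ since $\phi\geqslant-\tfrac12$ always; on $[\tfrac14,\tfrac13]$ the quadratic $L(m)=24m^2-12m+1$ has its vertex at $m=\tfrac14$ and is therefore increasing there, running from $-\tfrac12$ up to $-\tfrac13$, so $L(m)\leqslant\phi$ already forces $\phi\geqslant-\tfrac13$, and for $\phi\in[-\tfrac12,-\tfrac13]$ it is equivalent to $m\leqslant\tfrac{3+\sqrt{3+6\phi}}{12}$, the larger root of $L(m)=\phi$; one checks $\tfrac{3+\sqrt{3+6\phi}}{12}\in[\tfrac14,\tfrac13]$ on this $\phi$-range, so it is the branch that matters. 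In particular, for $\phi\geqslant-\tfrac13$ the lower constraint imposes nothing on $[0,\tfrac13]$.

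Then I would split according to the sign regions of $\phi$. For $\phi\in[-\tfrac12,-\tfrac13]$ the lower constraint is the active one, so $\sup A(\phi)=\tfrac{3+\sqrt{3+6\phi}}{12}$; to see that this lies in $A(\phi)$ one must check it is $\leqslant\sqrt{(1-\phi)/12}$, which, substituting $t=\sqrt{3+6\phi}\in[0,1]$ and squaring once, collapses to $(t-1)(t+3)\leqslant0$, i.e.\ $t\leqslant1$. For $\phi\in[-\tfrac13,1]$ the lower constraint is vacuous, so $\sup A(\phi)=\sqrt{(1-\phi)/12}$; here one verifies that this value is $\leqslant\tfrac13$ (equivalent to $\phi\geqslant-\tfrac13$) and, in the subrange $\phi\in[-\tfrac13,\tfrac14]$ where it exceeds $\tfrac14$, that it still satisfies $L(m)\leqslant\phi$, which after substituting $m^2=(1-\phi)/12$ reduces to $\sqrt{(1-\phi)/12}\geqslant(1-\phi)/4$, i.e.\ to $1-\phi\leqslant\tfrac43$, true on the range. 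Reading off the two formulas gives exactly the piecewise bound of the corollary, and attainment holds as noted above.

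The computations are all elementary; the only place requiring genuine attention is the case distinction in the second paragraph — picking the correct root of $L(m)=\phi$ and confirming which of the two bounds $\sqrt{(1-\phi)/12}$ and $\tfrac{3+\sqrt{3+6\phi}}{12}$ is binding on each $\phi$-interval. I expect the small inequality $\tfrac{3+\sqrt{3+6\phi}}{12}\leqslant\sqrt{(1-\phi)/12}$ to be the subtlest point, although, as indicated, it reduces after one squaring to the trivial $t\leqslant1$.
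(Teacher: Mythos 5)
Your proposal is correct and follows essentially the same route as the paper, which obtains the corollary simply by inverting the bounds of Theorem \ref{th:phi} (the paper's "using inverse functions, the corollary immediately follows"); your computation of the larger root $\frac{3+\sqrt{3+6\phi}}{12}$ of $24m^2-12m+1=\phi$, the comparison with $\sqrt{(1-\phi)/12}$, and the attainment argument via the attained bounds of the theorem are exactly the details the paper leaves implicit. The only remark is that your extra check of $L(m)\leqslant\phi$ on the subrange $\phi\in[-\tfrac13,\tfrac14]$ is redundant, since $L(m)\leqslant-\tfrac13\leqslant\phi$ there automatically.
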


\begin{figure}[h]
            \includegraphics[width=4cm]{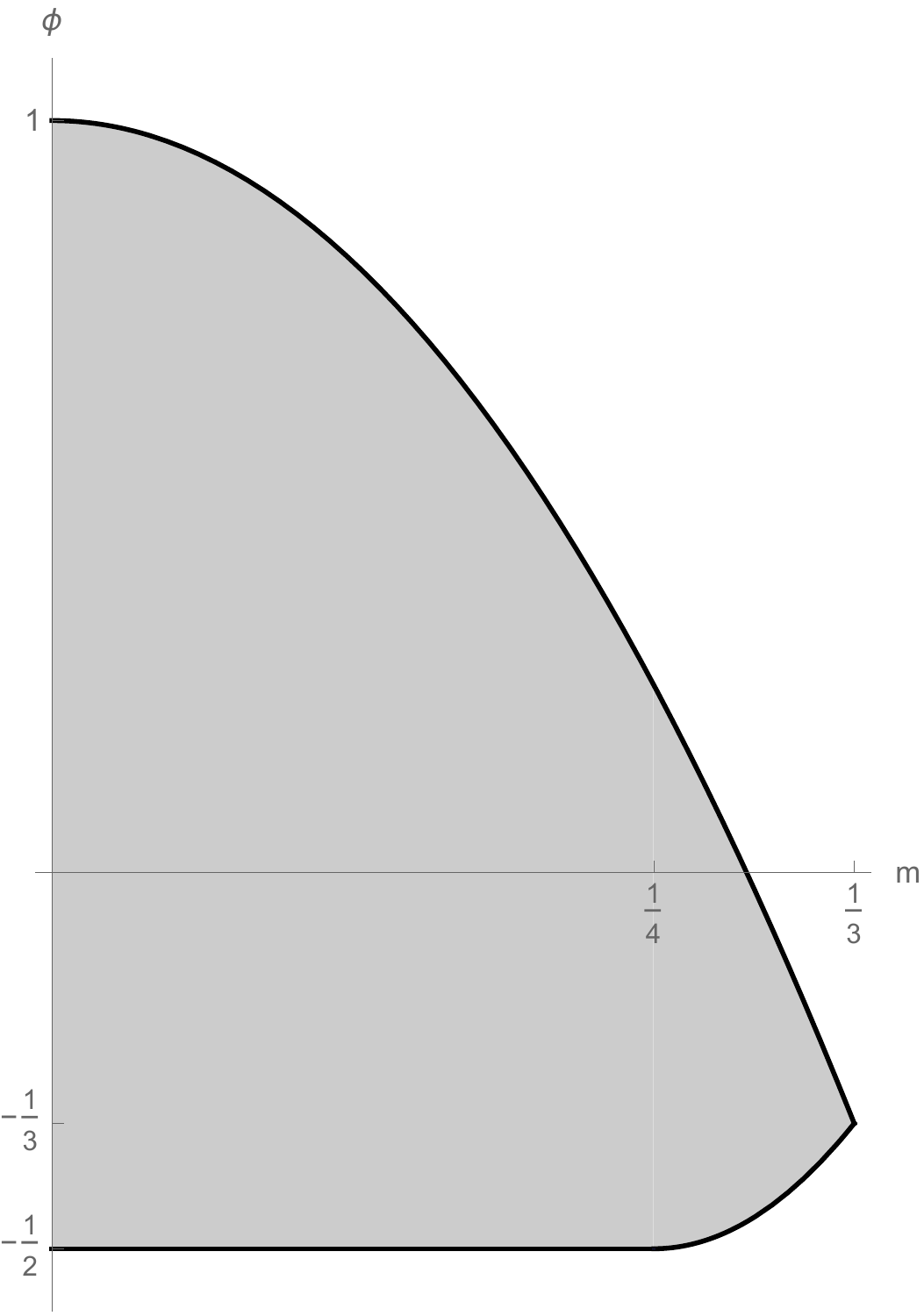} \hfil \includegraphics[width=5cm]{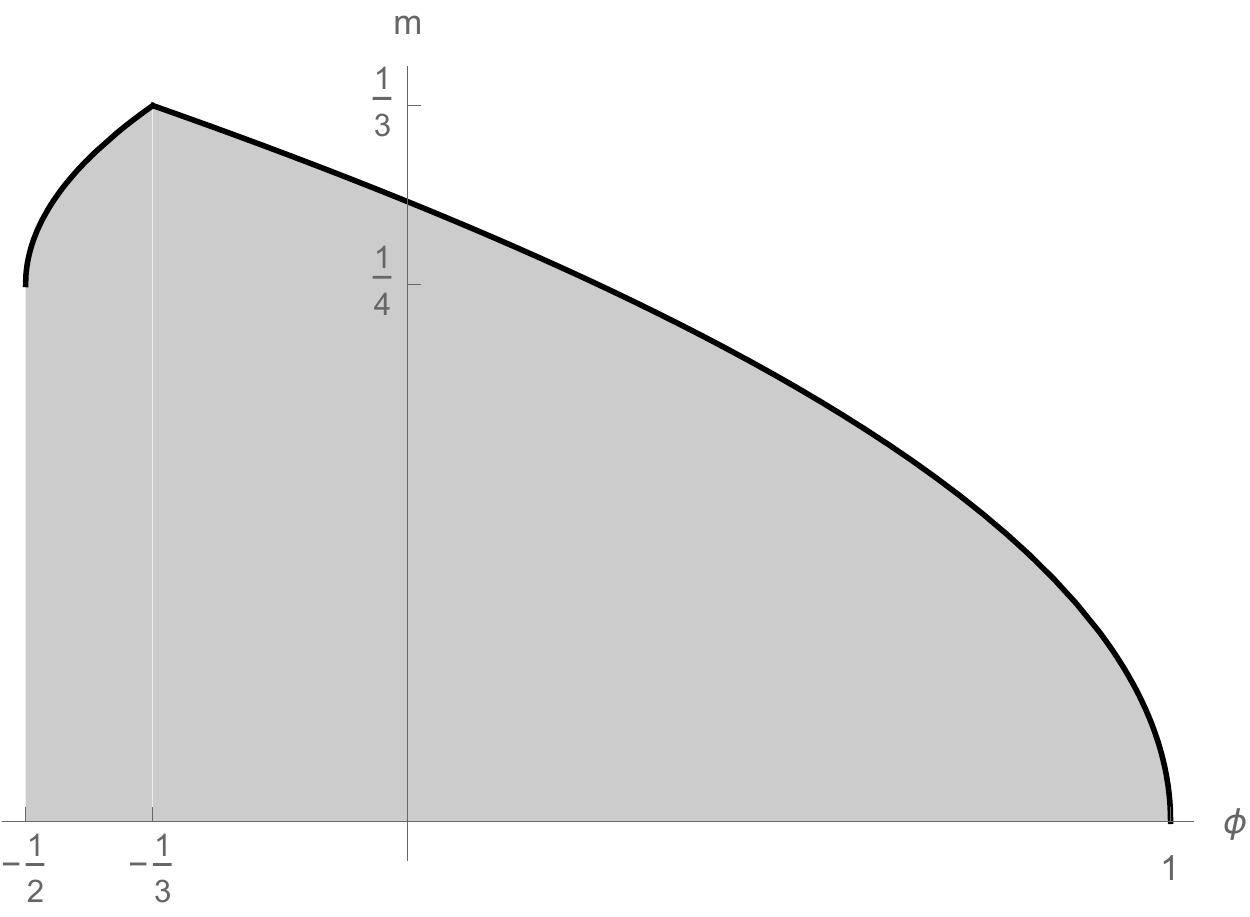}
            \caption{Relations between Spearman's footrule and asymmetry }\label{fig-phi}
\end{figure}

\section{Relations between Gini's gamma and asymmetry }\label{sec:gamma}

To find the relations between Gini's gamma and asymmetry, we compute minimum and maximum from \eqref{eq:meji} for $\kappa=\gamma$. Recall that
$$\gamma(C)=\cQ(M,C)+\cQ(W,C).$$

\begin{lemma}\label{lem:gamma}
  \begin{description}
    \item[(a)] $\displaystyle\min_{(a,b)\in\Delta_m}\gamma(\underline{C}^{(a,b)}_{m})= \gamma(\underline{C}^{(m,1-m)}_{ m})=\left\{ \begin{array}{ll}
                                    4m^2-1; & \text{if } 0 \leqslant m\leqslant\frac14, \vspace{1mm}\\
                                    20m^2-8m; & \text{if } \frac14 \leqslant m\leqslant \frac13,
                \end{array} \right.$
    \item[(b)] $\displaystyle\max_{(a,b)\in\Delta_m}\gamma(\overline{C}^{(a,b)}_{m})= \gamma(\overline{C}^{(m,2m)}_{m})=
        \left\{ \begin{array}{ll}
        1-8m^2;     & \text{if } 0 \leqslant m \leqslant \frac16, \vspace{1mm}\\
        12m-44m^2;  & \text{if } \frac16 \leqslant m \leqslant \frac15, \vspace{1mm}\\
        1+2m-19m^2; & \text{if } \frac15 \leqslant m \leqslant \frac14, \vspace{1mm}\\
        2-6m-3m^2;  & \text{if } \frac14 \leqslant m \leqslant \frac13.
          \end{array} \right.$.
  \end{description}
\end{lemma}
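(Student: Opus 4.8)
The plan is to mimic the computation carried out for Spearman's rho and Kendall's tau, but now summing the two contributions $\cQ(M,C)+\cQ(W,C)$ that define $\gamma$. Since the triangle $\Delta_m$ lies in the region $\{a\le b,\ a+b\le 1\}$, throughout we have $W(a,b)=0$ and $M(a,b)=a$, hence $d_1=m$ and $d_2=a-m$. For part (a) I would substitute these into Proposition \ref{prop2}: $\cQ(W,\underline{C}^{(a,b)}_{m})=4m(1-a-b+m)-1$ is linear in $x=a+b$, while $\cQ(M,\underline{C}^{(a,b)}_{m})$ is given by the simplified three-case formula \eqref{cQ(M,C1)} already derived in the proof of Lemma \ref{lem:footrule}, depending only on $b$. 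Adding them, $\gamma(\underline{C}^{(a,b)}_{m})$ is a function of $a+b$ and $b$ separately; I would argue that it is decreasing in $a+b$ (so the minimum forces $a+b=1$, i.e.\ the segment $UT$) and then, restricting to $a+b=1$ so that $a=1-b$, minimize the resulting one-variable function of $b\in[2m,1-m]$, checking monotonicity piece by piece exactly as in Lemma \ref{lem:footrule}(a). The minimum should land at the vertex $T=(m,1-m)$, and evaluating there should produce the stated two-case answer with breakpoint $m=\tfrac14$ (the value of $m$ at which the $\cQ(M,\cdot)$ term switches from $0$).

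For part (b) I would use Proposition \ref{prop3} with $d_2=a-m$: $\cQ(M,\overline{C}^{(a,b)}_{m})=1-4m(b-a+m)$ is linear in $x=b-a$, while $\cQ(W,\overline{C}^{(a,b)}_{m})$ is the nine-case expression from Proposition \ref{prop3}(1), which must first be simplified on $\Delta_m$. Here $a+b-\tfrac12\le d_2$ becomes $a+b\le 2a-m+\tfrac12$, and the relevant inequalities $2a+b-1\gtrless d_2$, $a+2b-1\gtrless d_2$, $1-a\gtrless d_2$, $1-b\gtrless d_2$ translate into linear conditions in $a,b,m$; on the region $a\le b\le 1-m$, $b-a\ge m$ only a few of the nine cases survive. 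After this reduction $\gamma(\overline{C}^{(a,b)}_{m})$ becomes piecewise polynomial, and I would maximize it over $\Delta_m$; I expect the maximizer to sit on the edge $RS$ (where $b-a$ is smallest, $b-a=m$), reducing everything to a one-variable optimization along that edge whose breakpoints at $m=\tfrac16,\tfrac15,\tfrac14$ come precisely from the successive case-switches in $\cQ(W,\cdot)$, yielding the four-case formula evaluated at $(a,b)=(m,2m)$.

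The main obstacle is the bookkeeping in part (b): one must correctly identify which of the nine cases of $\cQ(W,\overline{C}^{(a,b)}_{m})$ are actually realized inside $\Delta_m$ and track how that changes as $m$ crosses $\tfrac16$, $\tfrac15$, $\tfrac14$, and then verify that the maximum of the (different) polynomial pieces really occurs on the edge $RS$ rather than in the interior or on $RU$, $UT$. Concretely I would check that in each surviving case the partial derivative with respect to $b-a$ (equivalently, the sign of $\partial\gamma/\partial x$ for $x=b-a$) is non-negative on $\Delta_m$, so that the optimum is pushed to $x=m$; combined with the relation \eqref{rel Cabc} and Property (C4)/(C2) one could alternatively reduce the $\overline{C}$ computation to an $\underline{C}$ computation as in Proposition \ref{prop3}, which may shorten the case analysis. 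Finally, for both parts I would confirm that at the purported extremal points the conditions $0\le m\le d^*_\CC(a,b)$ and $b-a\ge m$ that define $\Delta_m$ are met, so that the extremizing $(a,b)$ genuinely lies in the admissible triangle and the bounds are attained by bona fide copulas satisfying \eqref{asymmetry point}.
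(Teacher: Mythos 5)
Your overall route is the paper's: plug $d_1=m$ and $d_2=a-m$ into Propositions \ref{prop2} and \ref{prop3}, use the simplified three-case formula \eqref{cQ(M,C1)} for $\cQ(M,\underline{C}^{(a,b)}_{m})$ and the restriction of the nine-case formula for $\cQ(W,\overline{C}^{(a,b)}_{m})$ to $\Delta_m$, reduce to an edge by monotonicity, and finish with a one-variable optimization; the breakpoints $\tfrac16,\tfrac15,\tfrac14$ and the extremal points $T$ and $R$ come out exactly as in the paper. Two steps, however, need repair before this is a proof. In (a), the inference ``$\gamma(\underline{C}^{(a,b)}_{m})$ is decreasing in $a+b$, so the minimum forces $a+b=1$'' does not follow: the function also depends on $b$ separately through $\cQ(M,\underline{C}^{(a,b)}_{m})$, and for $b<\tfrac12(1+m)$ no point of $\Delta_m$ has $a+b=1$, so monotonicity in $a+b$ at fixed $b$ only pushes the minimizer to the union $RU\cup UT$, and your plan never examines $RU$ (also, on $UT$ the variable $b$ runs over $[\tfrac12(1+m),1-m]$, not $[2m,1-m]$). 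The gap is easy to close: either follow the paper and substitute $a=b-m$ on $RU$ and $a=1-b$ on $UT$, checking that the resulting one-variable function is decreasing in $b$, or note that $4m(1-a-b+m)-1$ is non-increasing in $a+b\leqslant 1$ while \eqref{cQ(M,C1)} is non-increasing in $b\leqslant 1-m$, whence $\gamma(\underline{C}^{(a,b)}_{m})\geqslant\gamma(\underline{C}^{(m,1-m)}_{m})$ on all of $\Delta_m$.

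In (b), the monotonicity you invoke has the wrong sign as written: to push the maximizer to $b-a=m$ you need $\gamma(\overline{C}^{(a,b)}_{m})$ to be non-increasing in $x=b-a$ (equivalently, non-increasing in $b$ at fixed $a$), not a non-negative derivative. This does hold on $\Delta_m$ --- for instance, in the fourth case of $\cQ(W,\overline{C}^{(a,b)}_{m})$ the $b$-derivative of $\gamma$ equals $2(a+b)-2-6m\leqslant -6m$ precisely because $a+b\leqslant 1$ there --- but it must be verified case by case, and after the reduction you still have to optimize along the segment $b=a+m$ (within $\Delta_m$ this is $RU$, not all of $RS$; by Proposition \ref{prop4} the distinction is harmless) and show the maximum sits at $R=(m,2m)$. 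The paper performs the reduction in the other order (push $a$ up at fixed $b$, then show the resulting function \eqref{gam1} of $b$ alone is decreasing), which is essentially equivalent; either way, evaluating at $R$, where $d_2=0$, gives the stated four-case expression with the claimed breakpoints.
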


\begin{proof}
{\bf (a): }
By Proposition \ref{prop2} and \eqref{cQ(M,C1)} we have for any $(a,b) \in \Delta_m$
$$\gamma(\underline{C}^{(a,b)}_{m}) = 4m(1-a-b+m)-1+\left\{ \begin{array}{ll}
        0;                        & \text{if } m + \frac12 \leqslant b, \vspace{1mm}\\
        (2m+1-2b)^2;              & \text{if } \frac12(1+m) \leqslant b \leqslant m + \frac12, \vspace{1mm}\\
        m(2+3m-4b);               & \text{if } b \leqslant \frac12(1+m).  \\
                \end{array} \right.$$
For fixed $m$ and $b$  this expression is minimal for $a$ as large as possible, i.e when the point $(a,b)$ lies on the edge $RU$ or $UT$ (see Figure \ref{trikot}). This means that for $b \leqslant \frac12(m+1)$ we can take $a=b-m$ and for $b \geqslant \frac12(m+1)$ we can take $a=1-b$. We get
$$\gamma(\underline{C}^{(a,b)}_{m}) = \left\{ \begin{array}{ll}
        4 m^2-1;                 & \text{if } m + \frac12 \leqslant b, \vspace{1mm}\\
        4 m^2-1+(2m+1-2b)^2;     & \text{if } \frac12(1+m) \leqslant b \leqslant m + \frac12, \vspace{1mm}\\
        m(6-12b+11m)-1;          & \text{if } b \leqslant \frac12(1+m).  \\
                \end{array} \right.$$
This function is decreasing in $b$, since the first part is constant, the third part is linear with linear coefficient
$-12m$, and the second part is quadratic with derivative
$$\frac{d}{db}(4 m^2-1+(2m+1-2b)^2) = - 4(2m+1-2b) \leqslant 0.$$
So, $\gamma(\underline{C}^{(a,b)}_{m})$ takes its minimal value
for $a=m$, $b= 1-m$, i.e. at the vertex $T$, where
$$\gamma(\underline{C}^{(m,1-m)}_{m}) =\left\{ \begin{array}{ll}
        4m^2-1 ;    & \text{if } 0 \leqslant m \leqslant \frac14, \vspace{1mm}\\
        20m^2-8m; & \text{if } \frac14 \leqslant m \leqslant \frac13.
                \end{array} \right.
                $$

\begin{figure}[h]
            \includegraphics[width=5cm]{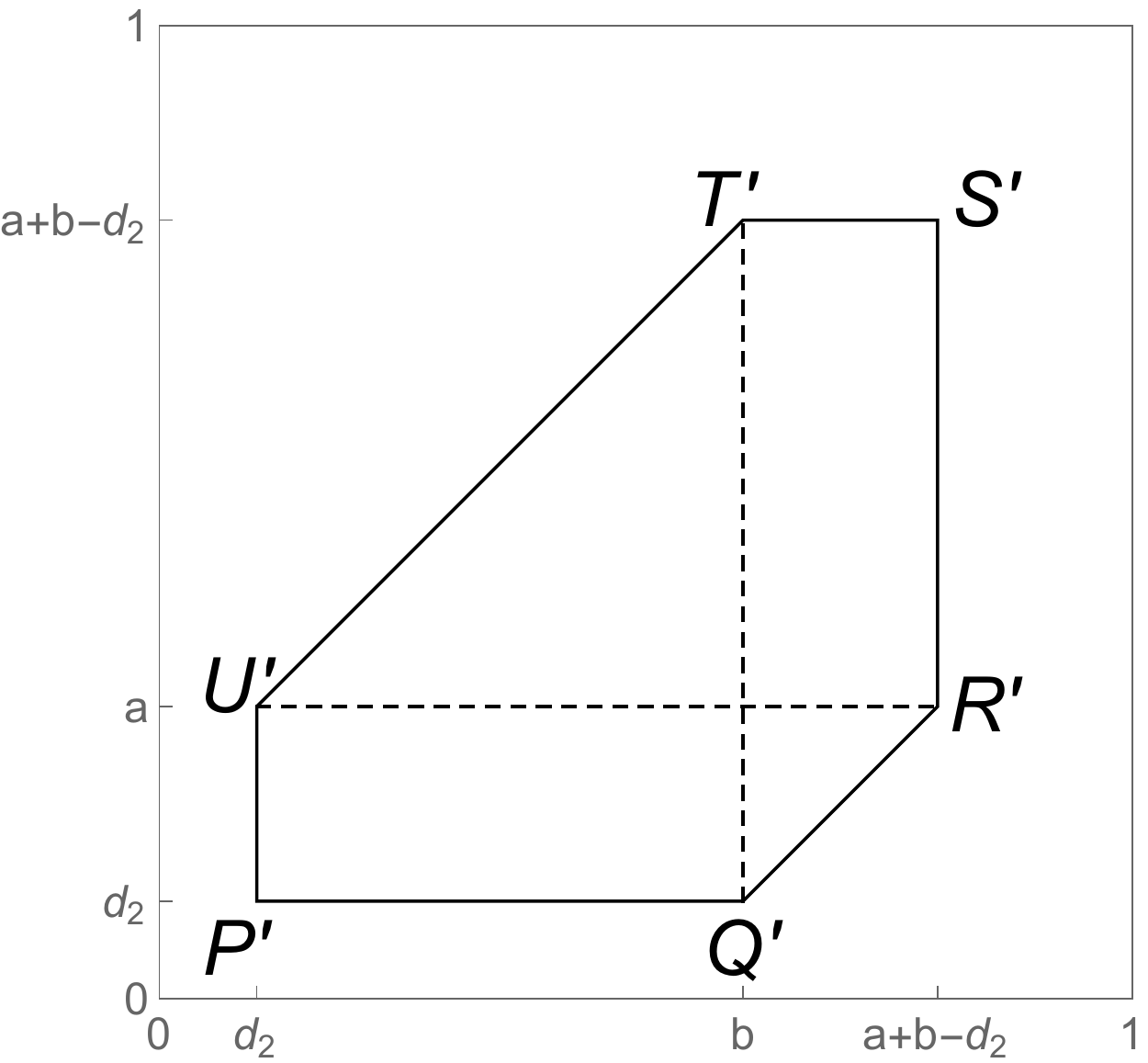}
            \caption{{The hexagon $P'Q'R'S'T'U'$} }\label{sestkotnik2}
\end{figure}

{\bf (b):}
For $(a,b) \in \Delta_m$ in the expression for $\cQ(W, \overline{C}^{(a,b)}_{m})$ four out of nine cases are possible, namely, the first, the second, the fourth, and the fifth. In the third case, when the vertex $T'$ of the hexagon is below the counter-diagonal and $R'$ is above it (see Figure \ref{sestkotnik2}),
the central point $(b, a)$ is above the main diagonal, so $b \leqslant a$. In the remaining four cases we have the central point above the counter-diagonal. Since in the triangle $\Delta_m$, we have $d_2 = a-m$, Proposition \ref{prop3} implies that the expression for  $\cQ(W, \overline{C}^{(a,b)}_{m})$
simplifies to
$$\cQ(W, \overline{C}^{(a,b)}_{m}) = \left\{ \begin{array}{ll}
        0;                               & \text{if } b \leqslant \frac12-m, \vspace{1mm}\\
        -(2b+2m-1)^2;                    & \text{if } \frac12-m \leqslant b \leqslant \frac12-\frac12 m, \vspace{1mm}\\
        -m(4b+3m-2);                     & \text{if } \frac12-\frac12 m \leqslant b \leqslant 1-a-m, \vspace{1mm}\\
        (a-1)^2+(b-1)^2+2(a-m)(b+m)-1;  & \text{if } 1-a-m \leqslant b.
                \end{array} \right.$$
Since $\cQ(M, \overline{C}^{(a,b)}_{m})= 1-4m(b-a+m)$ we have
$$\gamma(\overline{C}^{(a,b)}_{m}) = 1-4m(b-a+m) + \cQ(W, \overline{C}^{(a,b)}_{m}).$$
For fixed $m$ and $b$  this expression is maximal for $a$ as large as possible, i.e when the point $(a,b)$ lies on the edge $RU$ or $UT$ (see Figure \ref{trikot}). This means that for $b \leqslant \frac12(m+1)$ we can take $a=b-m$ and for $b \geqslant \frac12(m+1)$ we can take $a=1-b$. Then we get
\begin{equation}\label{gam1}
\gamma(\overline{C}^{(a,b)}_{m}) = \left\{ \begin{array}{ll}
        1-8m^2;                  & \text{if } b \leqslant \frac12-m, \vspace{1mm}\\
        1-8m^2-(2b+2m-1)^2;      & \text{if } \frac12-m \leqslant b \leqslant \frac12-\frac12 m, \vspace{1mm}\\
        1-8m^2-m(4b+3m-2);       & \text{if } \frac12-\frac12 m \leqslant b \leqslant \frac12, \vspace{1mm}\\
        1-8m^2+(2b+m-1)(2b-3m-1);  & \text{if } \frac12 \leqslant b \leqslant \frac12+\frac12 m,
        \vspace{1mm}\\ 1-6m(2b+m-1);            & \text{if } \frac12+\frac12 m \leqslant b.
                \end{array} \right.
\end{equation}
This function is decreasing in $b$, since the first part is constant, the third and the fifth are linear with linear coefficients
$-4m$ and $-12m$ respectively, and the second and the fourth are quadratic with derivatives
$\frac{d}{db}(1-8m^2-(2b+2m-1)^2) = -4(2b+2m-1) \leqslant 0$ and $\frac{d}{db}(1-8m^2+(2b+m-1)(2b-3m-1)) = 4(2b-m-1) \leqslant 0$.
So, $\gamma(\overline{C}^{(a,b)}_{m})$ takes its maximal value for $a=m, b= 2m$, i.e. at vertex $R$. Since $m\leqslant\frac13$, the last case in \eqref{gam1} does not occur when $b = 2m$.  Therefore, we obtain
$$\gamma(\overline{C}^{(m,2m)}_{m}) = \left\{ \begin{array}{ll}
        1-8m^2;     & \text{if } 0 \leqslant m \leqslant \frac16, \vspace{1mm}\\
        12m-44m^2;  & \text{if } \frac16 \leqslant m \leqslant \frac15, \vspace{1mm}\\
        1+2m-19m^2; & \text{if } \frac15 \leqslant m \leqslant \frac14, \vspace{1mm}\\
        2-6m-3m^2;  & \text{if } \frac14 \leqslant m \leqslant \frac13.
          \end{array} \right.$$
\end{proof}

The main theorem and its corollary now follow. Figure \ref{fig-gamma} shows the relations between Gini's gamma and asymmetry.

\begin{theorem}\label{th:gamma}
Let $C\in \Cmt$ be any copula with $\mu_\infty(C)=m$. Then $\gamma(C)\in[g(m),h(m)]$, where
$$g(m)=  \left\{ \begin{array}{ll}
                                    4m^2-1; & \text{if } 0 \leqslant m\leqslant\frac14, \vspace{1mm}\\
                                    20m^2-8m; & \text{if } \frac14 \leqslant m\leqslant \frac13,
                \end{array} \right.$$
and
$$h(m)=\left\{
    \begin{array}{ll}
        1-8m^2;     & \text{if } 0 \leqslant m \leqslant \frac16, \vspace{1mm}\\
        12m-44m^2;  & \text{if } \frac16 \leqslant m \leqslant \frac15, \vspace{1mm}\\
        1+2m-19m^2; & \text{if } \frac15 \leqslant m \leqslant \frac14, \vspace{1mm}\\
        2-6m-3m^2;  & \text{if } \frac14 \leqslant m \leqslant \frac13.
    \end{array}
         \right.$$
and the bounds are attained. In particular, if copula $C$ is symmetric, then $\gamma(C)$ can take any value in $[-1,1]$. If copula $C$ takes the maximal possible asymmetry $\mu_\infty(C)=\frac13$, then $\gamma(C) \in [-\frac49,-\frac13]$.
\end{theorem}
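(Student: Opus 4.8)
The plan is to mirror the arguments already used for Theorems \ref{th:rho} and \ref{th:tau}. By the Lemma yielding \eqref{eq:meji}, applied with $\kappa=\gamma$ (legitimate since $\gamma$ is one of the four measures $\rho,\tau,\phi,\gamma$ to which that lemma applies), any copula $C$ with $\mu_\infty(C)=m$ satisfies
\[
\gamma(C)\in\Big[\min_{(a,b)\in\Delta_m}\gamma(\underline{C}^{(a,b)}_{m}),\ \max_{(a,b)\in\Delta_m}\gamma(\overline{C}^{(a,b)}_{m})\Big].
\]
Lemma \ref{lem:gamma}(a) identifies the left endpoint as $g(m)$, attained at the vertex $T$, i.e.\ by $\underline{C}^{(m,1-m)}_{m}$, and Lemma \ref{lem:gamma}(b) identifies the right endpoint as $h(m)$, attained at the vertex $R$, i.e.\ by $\overline{C}^{(m,2m)}_{m}$. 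This already yields the inclusion $\gamma(C)\in[g(m),h(m)]$.

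For the attainability statement I would argue exactly as in the proof of Theorem \ref{th:rho}. Fix $(a,b)\in\Delta_m$. Since both $\underline{C}^{(a,b)}_{m}$ and $\overline{C}^{(a,b)}_{m}$ satisfy \eqref{asymmetry point} at $(a,b)$ with $c=m$, so does every convex combination $C_{(a,b),t}=t\,\underline{C}^{(a,b)}_{m}+(1-t)\,\overline{C}^{(a,b)}_{m}$, $t\in[0,1]$; hence $\mu_\infty(C_{(a,b),t})=m$, because the two local Fr\'echet--Hoeffding bounds have $\mu_\infty$ equal to $m$ and $\mu_\infty(\cdot)=d_\infty(\cdot,(\cdot)^t)$ obeys the triangle inequality, so the combination has $\mu_\infty\le m$, while it is $\ge m$ from the point $(a,b)$. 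The map $(a,b,t)\mapsto C_{(a,b),t}$ is continuous from the connected set $\Delta_m\times[0,1]$ into $\CC$ equipped with the uniform metric, and $\gamma$ is continuous on $\CC$ by (C5) --- in fact, for fixed $(a,b)$ it is linear in $t$, since $\cQ(M,\cdot)$ and $\cQ(W,\cdot)$ are linear in the copula --- so the image of this family under $\gamma$ is a subinterval of $\RR$. It contains $g(m)$ (take $t=1$, $(a,b)=(m,1-m)$) and $h(m)$ (take $t=0$, $(a,b)=(m,2m)$), hence it contains all of $[g(m),h(m)]$. Together with the first paragraph, this shows the image of $\gamma$ on the set of all copulas with asymmetry $m$ is exactly $[g(m),h(m)]$.

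Finally, the two special cases follow by substitution: $g(0)=-1$ and $h(0)=1$ give $\gamma(C)\in[-1,1]$ for symmetric $C$; and for $m=\tfrac13$ we read off $g(\tfrac13)=20\cdot\tfrac19-8\cdot\tfrac13=-\tfrac49$ and, from the branch of $h$ valid on $[\tfrac14,\tfrac13]$, $h(\tfrac13)=2-2-\tfrac13=-\tfrac13$, so $\gamma(C)\in[-\tfrac49,-\tfrac13]$. The only substantial ingredient is Lemma \ref{lem:gamma} itself --- the branch-by-branch optimisation of $\gamma(\underline{C}^{(a,b)}_{m})$ and $\gamma(\overline{C}^{(a,b)}_{m})$ over $\Delta_m$, which is already proved --- so that, granting it, the theorem is immediate. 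The point I would flag is that, unlike the rho and tau cases where the extrema were attained along the whole edges $UT$ and $RS$, here $g(m)$ and $h(m)$ are attained only at the single vertices $T$ and $R$; this does not affect the connectedness argument above, but it does mean the optimisation in Lemma \ref{lem:gamma} has to be carried out to the vertex rather than stopped at an edge.
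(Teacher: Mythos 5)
Your proof is correct and follows essentially the same route as the paper: the inclusion $\gamma(C)\in[g(m),h(m)]$ comes from the lemma giving \eqref{eq:meji} combined with Lemma \ref{lem:gamma}, and attainability from convex combinations $t\,\underline{C}^{(a,b)}_{m}+(1-t)\,\overline{C}^{(a,b)}_{m}$, which have asymmetry $m$ and on which $\gamma$ is affine in $t$. Your additional step of running the connectedness argument over all of $\Delta_m\times[0,1]$, because here the extrema are attained only at the distinct vertices $T$ and $R$ rather than along common edges, is a sensible and slightly more careful version of the paper's terse ``same argument as for $\rho$'' attainability claim.
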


\begin{corollary}\label{cor:gamma}
  If $\gamma(C)=\gamma$, then
  \[
    0\leqslant\mu_\infty(C)\leqslant\left\{
           \begin{array}{ll}
             \displaystyle\frac{\sqrt{\gamma+1}}{2}; & \text{if } -1\leqslant \gamma \leqslant -\frac34, \vspace{1mm}\\
             \displaystyle\frac{\sqrt{5\gamma+4}+2}{10}; & \text{if } -\frac34 \leqslant \gamma \leqslant -\frac49, \vspace{1mm}\\
             \displaystyle\frac{1}{3}; & \text{if } -\frac49 \leqslant \gamma \leqslant -\frac13, \vspace{1mm}\\
             \displaystyle\frac{\sqrt{15-3\gamma}-3}{3}; & \text{if } -\frac13 \leqslant \gamma \leqslant \frac{5}{16}, \vspace{1mm}\\
             \displaystyle\frac{\sqrt{20-19\gamma}+1}{19}; & \text{if } \frac{5}{16} \leqslant \gamma \leqslant \frac{16}{25}, \vspace{1mm}\\
             \displaystyle\frac{\sqrt{9-11\gamma}+3}{22}; & \text{if } \frac{16}{25} \leqslant \gamma \leqslant \frac79, \vspace{1mm}\\
             \displaystyle\frac{\sqrt{2-2\gamma}}{4}; & \text{if } \frac79 \leqslant \gamma \leqslant 1.
     \end{array}\right.
  \]
and the bounds are attained.
\end{corollary}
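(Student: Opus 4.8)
The plan is to obtain the corollary by inverting the envelope functions $g$ and $h$ of Theorem \ref{th:gamma}. That theorem says a copula $C$ with $\mu_\infty(C)=m$ satisfies $\gamma(C)\in[g(m),h(m)]$ and that every value of this interval is realized by such a copula (the attainment being, exactly as in the proof of Theorem \ref{th:rho}, a consequence of $\gamma$ being polynomial along the segment $t\mapsto t\,\underline{C}^{(a,b)}_m+(1-t)\overline{C}^{(a,b)}_m$). So for a prescribed $\gamma\in[-1,1]$ the set of achievable asymmetries is
\[
S(\gamma)=\bigl\{\,m\in[0,\tfrac13]\;:\;g(m)\leqslant\gamma\leqslant h(m)\,\bigr\},
\]
and what has to be shown is that $S(\gamma)=[0,\mu^*(\gamma)]$ with $\mu^*(\gamma)$ the piecewise quantity in the statement.

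First I would record the monotonicity that makes $S(\gamma)$ an interval: a piece-by-piece inspection shows $g$ is continuous and strictly increasing on $[0,\tfrac13]$, with $g(0)=-1$, $g(\tfrac14)=-\tfrac34$, $g(\tfrac13)=-\tfrac49$, while $h$ is continuous and strictly decreasing on $[0,\tfrac13]$, with $h(0)=1$, $h(\tfrac16)=\tfrac79$, $h(\tfrac15)=\tfrac{16}{25}$, $h(\tfrac14)=\tfrac5{16}$, $h(\tfrac13)=-\tfrac13$. Since $m=0$ always lies in $S(\gamma)$ and $g$ increases while $h$ decreases, $S(\gamma)$ is downward closed, hence $S(\gamma)=[0,\mu^*(\gamma)]$. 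The constraint $g(m)\leqslant\gamma$ is vacuous precisely when $\gamma\geqslant g(\tfrac13)=-\tfrac49$, and the constraint $\gamma\leqslant h(m)$ is vacuous precisely when $\gamma\leqslant h(\tfrac13)=-\tfrac13$; this produces three regimes, $-1\leqslant\gamma\leqslant-\tfrac49$ with $\mu^*(\gamma)=g^{-1}(\gamma)$, then $-\tfrac49\leqslant\gamma\leqslant-\tfrac13$ with $\mu^*(\gamma)=\tfrac13$, then $-\tfrac13\leqslant\gamma\leqslant1$ with $\mu^*(\gamma)=h^{-1}(\gamma)$.

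It then remains to invert $g$ and $h$ on their two, respectively four, polynomial pieces; on each piece this is a single quadratic equation, and the root to keep is in every case the larger one. Solving $4m^2-1=\gamma$ on $[0,\tfrac14]$ and $20m^2-8m=\gamma$ on $[\tfrac14,\tfrac13]$ gives $\tfrac{\sqrt{\gamma+1}}{2}$ and $\tfrac{\sqrt{5\gamma+4}+2}{10}$; solving $1-8m^2=\gamma$, $12m-44m^2=\gamma$, $1+2m-19m^2=\gamma$, $2-6m-3m^2=\gamma$ on $[0,\tfrac16]$, $[\tfrac16,\tfrac15]$, $[\tfrac15,\tfrac14]$, $[\tfrac14,\tfrac13]$ gives $\tfrac{\sqrt{2-2\gamma}}{4}$, $\tfrac{\sqrt{9-11\gamma}+3}{22}$, $\tfrac{\sqrt{20-19\gamma}+1}{19}$, $\tfrac{\sqrt{15-3\gamma}-3}{3}$. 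Pairing each expression with the $\gamma$-range obtained by pushing the corresponding piece endpoints through $g$ or $h$ reproduces the seven cases of the statement, and continuity of $g$ and $h$ ensures the formulas match at the junctions $\gamma\in\{-\tfrac34,-\tfrac49,-\tfrac13,\tfrac5{16},\tfrac{16}{25},\tfrac79\}$.

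Finally, the bounds are attained: for any admissible $\gamma$ set $m=\mu^*(\gamma)$; then $\gamma$ equals $g(m)$ or $h(m)$, or else $m=\tfrac13$, so in all cases $\gamma\in[g(m),h(m)]$ and Theorem \ref{th:gamma} furnishes a copula $C$ with $\mu_\infty(C)=m$ and $\gamma(C)=\gamma$. The one step requiring care is the bookkeeping of the third paragraph — tracking which quadratic root falls in which subinterval and checking the endpoint values of $g$ and $h$ — but this is routine once the monotonicity of the second paragraph is in place.
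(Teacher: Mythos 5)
Your proposal is correct and follows essentially the same route as the paper: the corollary is obtained from Theorem \ref{th:gamma} by inverting the monotone envelope functions $g$ and $h$ piece by piece (the paper simply states that the corollary "now follows" via inverse functions, leaving the monotonicity and root-selection bookkeeping implicit). Your inverted expressions and the matching of $\gamma$-ranges at the junction values $-\tfrac34,-\tfrac49,-\tfrac13,\tfrac5{16},\tfrac{16}{25},\tfrac79$ all check out.
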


\begin{figure}[h]
            \includegraphics[width=4cm]{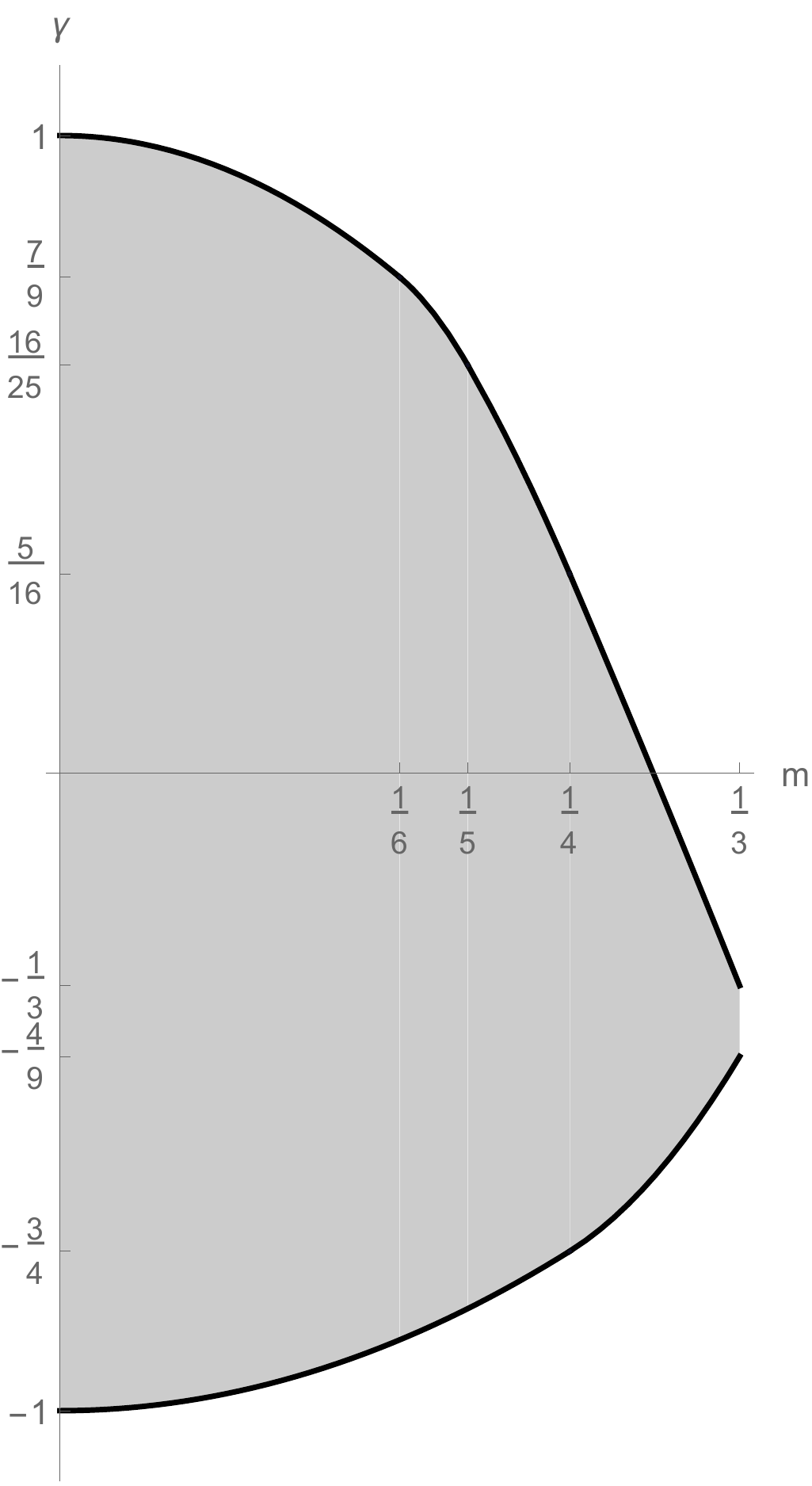} \hfil \includegraphics[width=6cm]{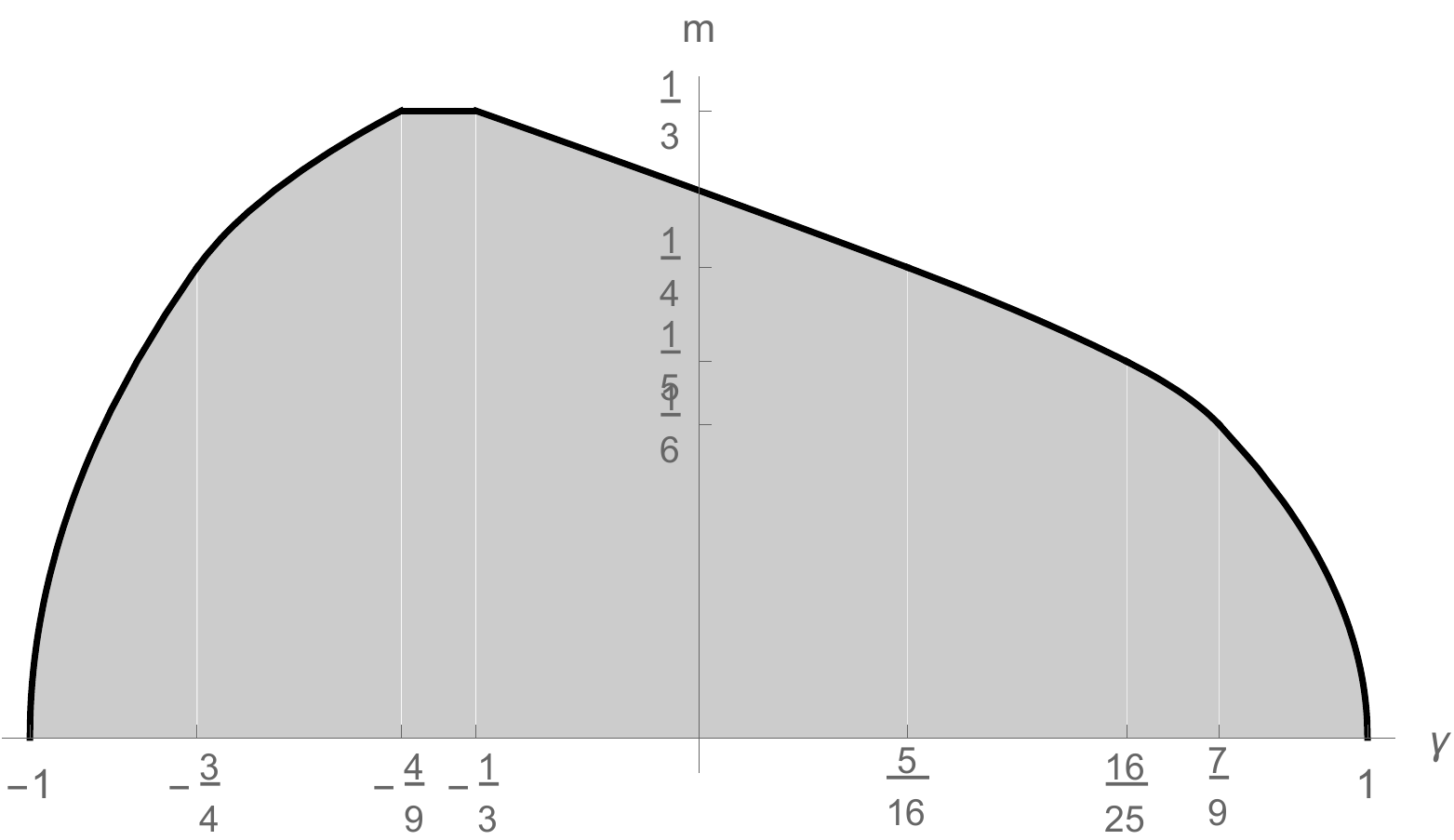}
            \caption{Relations between Gini's gamma and asymmetry }\label{fig-gamma}
\end{figure}

\section{Relations between Blomqvist's beta and asymmetry }\label{sec:beta}

Blomqvist's beta $\beta(C)$ of copula $C$ is defined in terms of $C(\frac12,\frac12)$ (see (\ref{beta})). Observe that for $a \leqslant b$
$$\underline{C}^{(a,b)}_{c}\textstyle(\frac12,\frac12)=\left\{
    \begin{array}{ll}
             c;           & a,b\leqslant\frac12 \text{ or } \frac12\leqslant a,b\vspace{1mm}\\
             c+a-\frac12; & a\leqslant\frac12, \frac12\leqslant b, a+b\geqslant 1, c+a\geqslant\frac12, \vspace{1mm}\\
             c-b+\frac12; & a\leqslant\frac12, \frac12\leqslant b, a+b\leqslant 1, c-b\geqslant\frac12, \vspace{1mm}\\
             0;           & \text{otherwise}.
    \end{array}
         \right.
$$
Then it is straightforward to observe that
$$\min_{(a,b)\in\Delta RST} \beta(\underline{C}^{(a,b)}_{m})=\beta(\underline{C}^{(m,1-m)}_{m})=\left\{
    \begin{array}{ll}
             -1,& 0\leqslant m\leqslant\frac14,\vspace{1mm}\\
             8m-3,& \frac14\leqslant m\leqslant\frac13.
    \end{array}
         \right.$$
This gives the lower bound of $\beta$ for all copulas with asymmetry $m$. It is attained at vertex $T$. Intuitively, to minimize $\beta$, the bump of $\underline{C}^{(a,b)}_{m}$ needs to be as far away from the point $(\frac12, \frac12)$ as possible.

Let us now consider the upper bound. For $a \leqslant b$ we have
$$\overline{C}^{(a,b)}_{c}\textstyle(\frac12,\frac12)=\left\{
    \begin{array}{ll}
             a-c; & a-c \leqslant \frac12 \leqslant a,\vspace{1mm}\\
             \frac12-c; & a \leqslant \frac12 \leqslant b, \vspace{1mm}\\
             1-b-c; & b \leqslant \frac12 \leqslant b+c, \vspace{1mm}\\
             \frac12,& \rm{otherwise}.
    \end{array}
         \right.
$$
Then it is straightforward to observe that
$$\max_{(a,b)\in\Delta RST} \beta(\overline{C}^{(a,b)}_{m})=\beta(\overline{C}^{(m,2m)}_{m})=\left\{
    \begin{array}{ll}
             1,& 0\leqslant m\leqslant\frac16,\vspace{1mm}\\
             3-12m,& \frac16\leqslant m\leqslant\frac14,\vspace{1mm}\\
             1-4m,&\frac14\leqslant m\leqslant\frac13.
    \end{array}
         \right.$$
The upper bound is attained at vertex $R$.

We have proved the following theorem and its corollary. Figure \ref{fig-beta} shows the relations between Blomqvist's beta and asymmetry.

\begin{theorem}\label{th:beta}
Let $C\in \Cmt$ be any copula with $\mu_\infty(C)=m$. Then $\beta(C)\in[g(m),h(m)]$, where
$$g(m)=  \left\{
    \begin{array}{ll}
             -1,& 0\leqslant m\leqslant\frac14,\vspace{1mm}\\
             8m-3,& \frac14\leqslant m\leqslant\frac13.
    \end{array}
         \right.$$
and
$$h(m)=\left\{
    \begin{array}{ll}
             1,& 0\leqslant m\leqslant\frac16,\vspace{1mm}\\
             3-12m,& \frac16\leqslant m\leqslant\frac14,\vspace{1mm}\\
             1-4m,&\frac14\leqslant m\leqslant\frac13.
    \end{array}
         \right.$$
and the bounds are attained. In particular, if copula $C$ is symmetric, then $\beta(C)$ can take any value in $[-1,1]$. If copula $C$ takes the maximal possible asymmetry $\mu_\infty(C)=\frac13$, then $\beta(C) = -\frac13$.
\end{theorem}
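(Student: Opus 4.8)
The plan is to follow the same template as Sections \ref{sec:rho}--\ref{sec:gamma}, which for $\beta$ becomes especially short: since $\beta(C)=4C(\tfrac12,\tfrac12)-1$ depends on $C$ only through its value at the centre, there is no integration, and everything reduces to optimizing an explicit piecewise function over a triangle. First I would set up the bracketing using only that $\beta$ is a measure of concordance, hence symmetric (C2) and monotone (C3). Let $C\in\Cmt$ with $\mu_\infty(C)=m$, and pick $(u_0,v_0)$ attaining the maximum in $\mu_\infty(C)=\max_{u,v\in\II}|C(u,v)-C(v,u)|$. Swapping coordinates if necessary we may take $u_0\leqslant v_0$, and replacing $C$ by $C^t$ if necessary (which changes neither $\beta$ by (C2) nor $\mu_\infty$) we may assume $C(u_0,v_0)-C(v_0,u_0)=m$. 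Then $m\leqslant d_\Cmt^*(u_0,v_0)$, so $(u_0,v_0)\in\Delta RST$, and Theorem \ref{th1} gives $\underline{C}^{(u_0,v_0)}_{m}\leqslant C\leqslant\overline{C}^{(u_0,v_0)}_{m}$; monotonicity of $\beta$ then yields
\[
\beta(\underline{C}^{(u_0,v_0)}_{m})\leqslant\beta(C)\leqslant\beta(\overline{C}^{(u_0,v_0)}_{m}).
\]
Thus $\beta(C)$ lies between $\min_{(a,b)\in\Delta RST}\beta(\underline{C}^{(a,b)}_{m})$ and $\max_{(a,b)\in\Delta RST}\beta(\overline{C}^{(a,b)}_{m})$, which by the two optimizations carried out just before the theorem are exactly $g(m)$ and $h(m)$. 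Hence $\beta(C)\in[g(m),h(m)]$.

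Those two optimizations I would justify as already indicated: substitute the piecewise formulas for $\underline{C}^{(a,b)}_{c}(\tfrac12,\tfrac12)$ and $\overline{C}^{(a,b)}_{c}(\tfrac12,\tfrac12)$ with $c=m$, use that $a+b\leqslant1$ and $b-a\geqslant m$ everywhere on $\Delta RST$, check that for $a$ on the appropriate edge the relevant expression is monotone in $b$, and conclude that the extremum sits at the vertex $T=(m,1-m)$ for $\underline{C}$ and at $R=(m,2m)$ for $\overline{C}$; the intuition is that $\beta(\underline{C}^{(a,b)}_{m})$ is minimized by pushing the shuffle as far from $(\tfrac12,\tfrac12)$ as possible, while $\beta(\overline{C}^{(a,b)}_{m})$ is maximized by pulling it towards $(\tfrac12,\tfrac12)$. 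Tracking which branch of $C(\tfrac12,\tfrac12)$ is active along the optimal edge produces the breakpoints $m=\tfrac16,\tfrac14,\tfrac13$ in $h(m)$ and $m=\tfrac14$ in $g(m)$; this is the only place any genuine case-checking occurs, and it is elementary.

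Next I would show every value in $[g(m),h(m)]$ is attained, arguing as in Theorem \ref{th:rho} but exploiting that $\beta$ is \emph{affine} in the copula. For any fixed $(a,b)\in\Delta RST$ and $t\in[0,1]$ the copula $t\underline{C}^{(a,b)}_{m}+(1-t)\overline{C}^{(a,b)}_{m}$ has asymmetry $m$ and $\beta$-value $t\beta(\underline{C}^{(a,b)}_{m})+(1-t)\beta(\overline{C}^{(a,b)}_{m})$. Sliding $(a,b)$ continuously through the path-connected triangle $\Delta RST$ and then moving $t$ from $1$ to $0$ connects the minimizer $\underline{C}^{(m,1-m)}_{m}$ to the maximizer $\overline{C}^{(m,2m)}_{m}$ within the set of copulas of asymmetry $m$; since $\beta$ is continuous, its image on this connected set is an interval, and being contained in $[g(m),h(m)]$ while containing both endpoints it is all of $[g(m),h(m)]$. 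The special cases then drop out by evaluation: $g(0)=-1$, $h(0)=1$ (consistent with $\beta(C)\in[-1,1]$ for every copula), while $d_\CC^*\leqslant\tfrac13$ forces $m\leqslant\tfrac13$, and $g(\tfrac13)=8\cdot\tfrac13-3=-\tfrac13=1-4\cdot\tfrac13=h(\tfrac13)$, so $\beta(C)=-\tfrac13$ at maximal asymmetry.

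I do not expect a serious obstacle: the heavy lifting is the piecewise optimization displayed before the theorem, which is elementary once the active branches are bookkept, and the bracketing and interval-filling steps are literally the ones used in Sections \ref{sec:rho}--\ref{sec:gamma}. The single point that must be stated carefully --- and it is inherited from the proof of Theorem \ref{th:rho} rather than new --- is that the extremal copulas $\underline{C}^{(m,1-m)}_{m}$, $\overline{C}^{(m,2m)}_{m}$ and the interpolating convex combinations have asymmetry \emph{exactly} $m$, not merely $\geqslant m$; here one appeals to the (flipped, respectively straight) shuffle structure of these copulas, equivalently to the fact that they are the sharp local bounds of Lemma \ref{lem1}.
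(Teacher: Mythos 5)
Your proposal is correct and follows essentially the same route as the paper: bracket $\beta(C)$ via Theorem \ref{th1} together with properties (C2)--(C3), then optimize the explicit piecewise values of $\underline{C}^{(a,b)}_{m}(\tfrac12,\tfrac12)$ and $\overline{C}^{(a,b)}_{m}(\tfrac12,\tfrac12)$ over $\Delta RST$, locating the extrema at $T(m,1-m)$ and $R(m,2m)$. Your additional connectedness/convex-combination argument for filling the whole interval goes slightly beyond the paper's claim (which only asserts that the bounds are attained), and your closing remark that the extremal copulas have asymmetry exactly $m$ is a point the paper leaves implicit; both are sound.
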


\begin{figure}[h]
            \includegraphics[width=4cm]{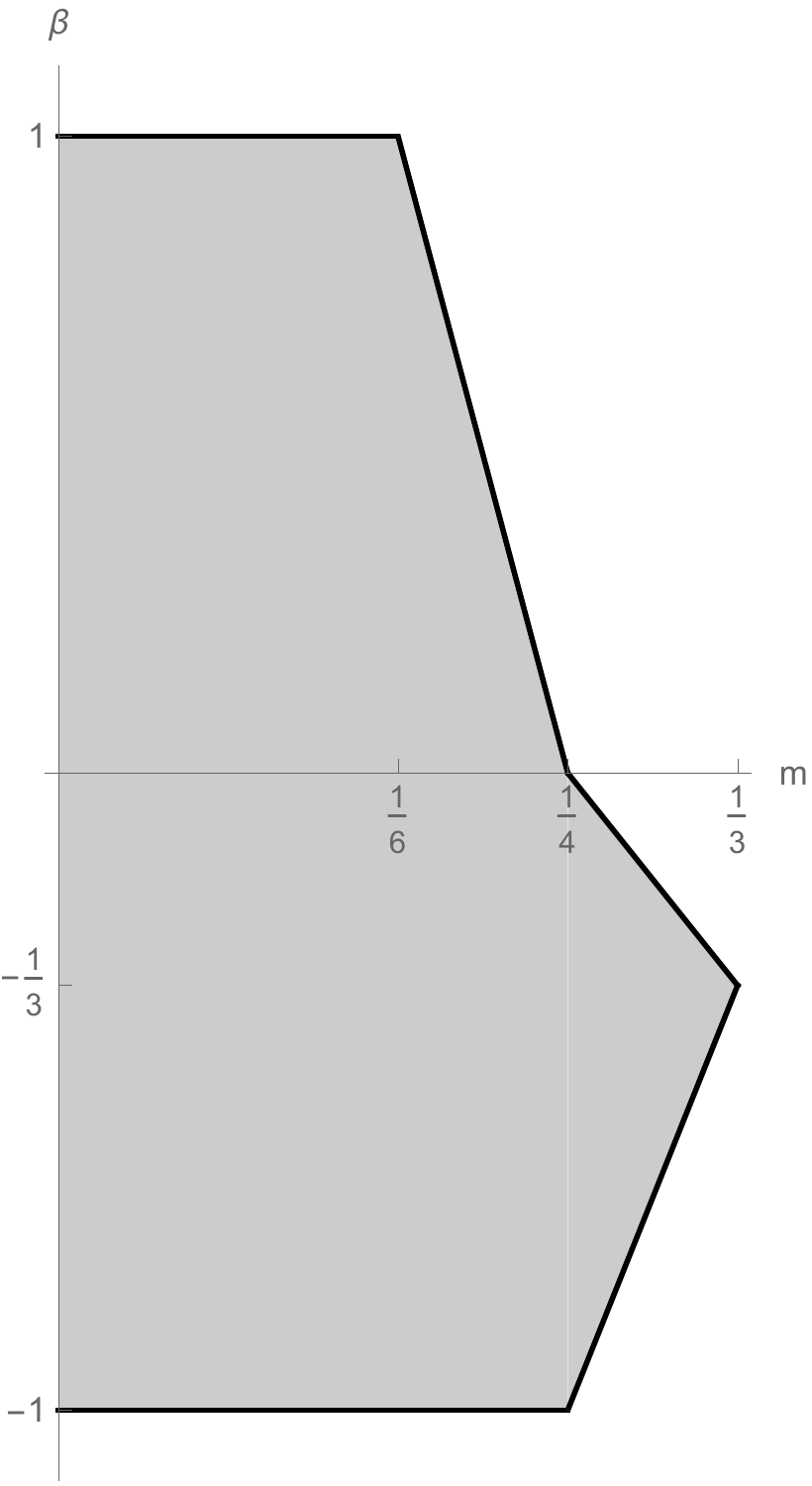} \hfil \includegraphics[width=6cm]{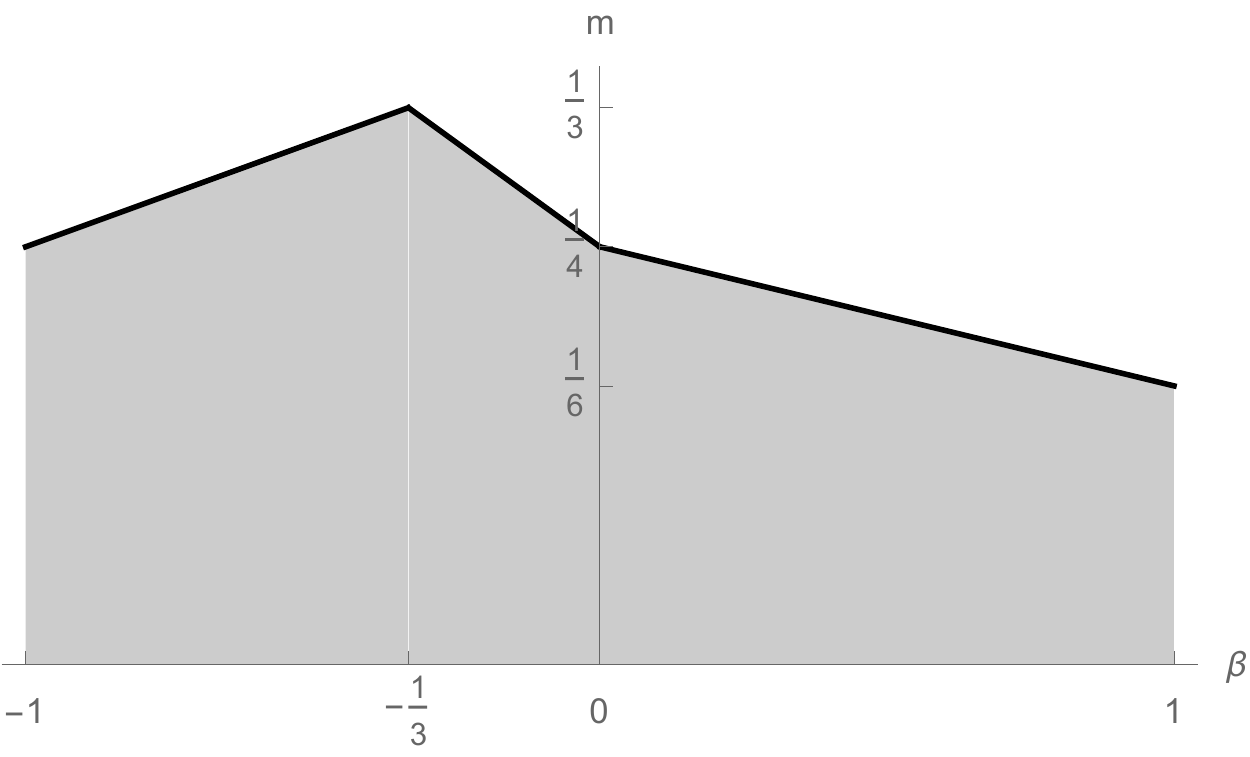}
            \caption{Relations between Blomqvist's beta and asymmetry }\label{fig-beta}
\end{figure}

\begin{corollary}\label{cor:beta}
  If $\beta(C)=\beta$, then
  \[
    0\leqslant\mu_\infty(C)\leqslant\left\{
           \begin{array}{ll}
             \displaystyle\frac{3+\beta}{8}; & \text{if } -1\leqslant \beta\leqslant-\frac13, \vspace{1mm}\\
             \displaystyle\frac{1-\beta}{4}; & \text{if } -\frac13\leqslant\beta\leqslant 0, \vspace{1mm}\\
             \displaystyle\frac{3-\beta}{12}; & \text{if } 0\leqslant\beta\leqslant 1.
           \end{array}
         \right.
  \]
and the bounds are attained.
\end{corollary}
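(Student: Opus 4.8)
The plan is to read Corollary~\ref{cor:beta} as the inverse of Theorem~\ref{th:beta}. That theorem describes, for each admissible degree of asymmetry $m\in[0,\tfrac13]$, the exact interval $[g(m),h(m)]$ of attainable values of $\beta$; so to bound $\mu_\infty(C)$ in terms of $\beta=\beta(C)$ one simply solves the double inequality $g(m)\leqslant\beta\leqslant h(m)$ for $m$ and takes the largest admissible value, then checks it is realized.

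First I would record the qualitative behaviour of the two boundary functions. The function $g$ is continuous on $[0,\tfrac13]$, equals $-1$ on $[0,\tfrac14]$ and equals $8m-3$ on $[\tfrac14,\tfrac13]$, hence is non-decreasing with $g(\tfrac13)=-\tfrac13$. The function $h$ is continuous, equals $1$ on $[0,\tfrac16]$, $3-12m$ on $[\tfrac16,\tfrac14]$ and $1-4m$ on $[\tfrac14,\tfrac13]$, hence is non-increasing with $h(\tfrac13)=-\tfrac13$. Monotonicity implies that $\{m\in[0,\tfrac13]:g(m)\leqslant\beta\}=[0,m_g(\beta)]$ and $\{m\in[0,\tfrac13]:h(m)\geqslant\beta\}=[0,m_h(\beta)]$ for suitable $m_g(\beta),m_h(\beta)$, so the set of admissible $m$ equals $[0,m^*(\beta)]$ with $m^*(\beta)=\min\{m_g(\beta),m_h(\beta)\}$, and this $m^*(\beta)$ is exactly the asserted bound.

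Next I would compute $m^*(\beta)$ piece by piece by solving the relevant linear equations. For $\beta\geqslant-\tfrac13$ one has $g(m)\leqslant g(\tfrac13)=-\tfrac13\leqslant\beta$ for all $m$, so the $g$-constraint is vacuous and $m^*(\beta)=m_h(\beta)$, obtained from $3-12m=\beta$ (yielding $\tfrac{3-\beta}{12}$, which lies in $[\tfrac16,\tfrac14]$ precisely when $0\leqslant\beta\leqslant1$) or from $1-4m=\beta$ (yielding $\tfrac{1-\beta}{4}$, in $[\tfrac14,\tfrac13]$ precisely when $-\tfrac13\leqslant\beta\leqslant0$). For $\beta\leqslant-\tfrac13$ one has $h(m)\geqslant h(\tfrac13)=-\tfrac13\geqslant\beta$ for all $m$, so the $h$-constraint is vacuous and $m^*(\beta)=m_g(\beta)$, obtained from $8m-3=\beta$, yielding $\tfrac{3+\beta}{8}\in[\tfrac14,\tfrac13]$ precisely when $-1\leqslant\beta\leqslant-\tfrac13$. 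The three expressions agree at the break points $\beta=0$ and $\beta=-\tfrac13$, reproducing the displayed three-case formula; combined with Theorem~\ref{th:beta} this gives $\mu_\infty(C)\leqslant m^*(\beta(C))$.

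Finally, for attainment I would observe that at $m=m^*(\beta)$ the binding one of the two inequalities becomes an equality, namely $h(m^*(\beta))=\beta$ when $-\tfrac13\leqslant\beta\leqslant1$ and $g(m^*(\beta))=\beta$ when $-1\leqslant\beta\leqslant-\tfrac13$ (at $\beta\in\{-1,-\tfrac13,0,1\}$ either works). By the attainment part of Theorem~\ref{th:beta}, the extremal copula $\overline{C}^{(m^*,2m^*)}_{m^*}$ (respectively $\underline{C}^{(m^*,1-m^*)}_{m^*}$), where $m^*=m^*(\beta)$, has $\mu_\infty$ equal to $m^*(\beta)$ and Blomqvist's beta equal to $h(m^*(\beta))$ (respectively $g(m^*(\beta))$), hence equal to $\beta$, so the bound is attained. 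The only real work is the bookkeeping in the third paragraph — verifying that the inactive constraint stays inactive over all of $[0,\tfrac13]$ and that each solved value lands in the linear piece from which it was derived — but this is elementary piecewise-linear inversion and presents no genuine analytic obstacle.
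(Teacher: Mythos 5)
Your proposal is correct and matches the paper's route: the paper obtains Corollary~\ref{cor:beta} exactly by inverting the piecewise bounds $g(m)\leqslant\beta\leqslant h(m)$ of Theorem~\ref{th:beta} (as with the ``using inverse functions'' remark for Spearman's rho), with attainment inherited from the theorem's extremal copulas $\underline{C}^{(m,1-m)}_{m}$ and $\overline{C}^{(m,2m)}_{m}$. Your explicit monotonicity bookkeeping for $g$ and $h$ and the check that each solved value lies in the correct linear piece simply spell out what the paper leaves implicit.
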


\section{Conclusion}

Here is a somewhat surprising fact that emerges from our main results.

\begin{corollary}
Let $m\in [0,\frac13]$.
For any measure of concordance $\kappa$ from the set $\{\rho, \tau, \phi, \gamma, \beta\}$ the maximal value of $\kappa(\overline{C}^{(a,b)}_{m})$ over the triangle $\Delta_m$ is attained at the same point, namely $R(m,2m)$.

For any measure of concordance $\kappa$ from the set $\{\rho, {\tau,} \phi, \gamma, \beta\}$ the minimal value of $\kappa(\underline{C}^{(a,b)}_{m})$ over the triangle $\Delta_m$ is attained at the same point, namely $T(m,1-m)$.

{Furthermore, the following table summarizes the minimizing and maximizing line segments (or points) in the triangle $RUT$ (see Figure \ref{trikot}):
\begin{table}[h]
\begin{tabular}{c|ccccc|}
\cline{2-6}
                      &  $\rho$ & $\tau$ &  $\phi$ & $\gamma$ &  $\beta$ \\ \hline
\multicolumn{1}{|c|}{ $\min$}  & $UT$ & $UT$ &  $T$ & $T$ & $T$ \\ \hline
\multicolumn{1}{|c|}{ $\max$}  &  $RU$ &  $RU$ &  $RU$ &  $R$  &  $R$  \\ \hline
\end{tabular}
\end{table}
}
\end{corollary}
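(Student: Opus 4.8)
The plan is to assemble the section-by-section computations that have already been carried out, and simply read off where each extremum is attained. Recall that for $\kappa\in\{\rho,\tau,\phi,\gamma\}$ the Lemma with conclusion \eqref{eq:meji} reduces the range of $\kappa$ on the copulas with asymmetry $m$ to the two quantities $\min_{(a,b)\in\Delta_m}\kappa(\underline{C}^{(a,b)}_{m})$ and $\max_{(a,b)\in\Delta_m}\kappa(\overline{C}^{(a,b)}_{m})$, and these were evaluated in Lemmas \ref{lem:rho}, \ref{lem:tau}, \ref{lem:footrule} and \ref{lem:gamma}; for $\beta$ the same two extrema were obtained directly in Section \ref{sec:beta}. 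Thus the corollary is a matter of inspecting those proofs.

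First I would handle $\rho$ and $\tau$. In the proofs of Lemma \ref{lem:rho}(a) and Lemma \ref{lem:tau}(a) the value $\kappa(\underline{C}^{(a,b)}_{m})$ depends on $(a,b)$ only through $x=a+b$ and is, respectively, quadratically and linearly decreasing on $x\in[3m,1]$; since $a+b\equiv 1$ on the edge $UT$ of $\Delta_m$, the minimum is attained on all of $UT$, in particular at $T=(m,1-m)$. Dually, $\kappa(\overline{C}^{(a,b)}_{m})$ depends only on $y=b-a$ and is decreasing on $y\in[m,1-2m]$; since $b-a\equiv m$ on the edge $RU$, the maximum is attained on all of $RU$, in particular at $R=(m,2m)$. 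This yields the first two columns of the table.

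Next I would treat $\phi$, $\gamma$, $\beta$, where the relevant objective genuinely varies along the candidate edge and is extremized only at an endpoint. For $\phi$: the proof of Lemma \ref{lem:footrule}(a) shows that $\cQ(M,\underline{C}^{(a,b)}_{m})$ is, piecewise, linear or quadratic in $b$ and strictly decreasing on $[2m,1-m]$, forcing the minimum of $\phi$ to the single point $b=1-m$, i.e.\ the vertex $T$; and Lemma \ref{lem:footrule}(b) shows $\phi(\overline{C}^{(a,b)}_{m})$ is an affine increasing function of $\tau(\overline{C}^{(a,b)}_{m})$, hence maximized exactly where $\tau$ is, on $RU$, in particular at $R$. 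For $\gamma$: the proof of Lemma \ref{lem:gamma} shows that for fixed $b$ the value is monotone in $a$, so the extremum lies on $RU\cup UT$, and the resulting function of $b$ (given in \eqref{gam1} for the upper bound) is strictly monotone, pinning the minimum to $T$ and the maximum to $R$. For $\beta$: the explicit formulas for $\underline{C}^{(a,b)}_{c}(\tfrac12,\tfrac12)$ and $\overline{C}^{(a,b)}_{c}(\tfrac12,\tfrac12)$ in Section \ref{sec:beta} give $\min\beta(\underline{C}^{(a,b)}_{m})=\beta(\underline{C}^{(m,1-m)}_{m})$ at $T$ and $\max\beta(\overline{C}^{(a,b)}_{m})=\beta(\overline{C}^{(m,2m)}_{m})$ at $R$. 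Collecting the five cases shows every maximum is attained at $R(m,2m)$ and every minimum at $T(m,1-m)$, and the finer ``segment versus point'' data are exactly the entries of the table.

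The content is purely organizational, so there is no real obstacle beyond bookkeeping; the one point requiring a moment of care is the distinction between ``attained on a segment'' and ``attained only at a vertex'', which amounts to checking whether the objective is constant along the candidate edge (true for $\rho$ and $\tau$, where it depends only on $a+b$, resp.\ $b-a$) or strictly monotone along it (true for $\phi$, $\gamma$, $\beta$). I would verify this strictness directly from the monotonicity estimates already recorded in the proofs of Lemmas \ref{lem:footrule} and \ref{lem:gamma}.
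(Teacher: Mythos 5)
Your overall route is the same as the paper's: the corollary has no independent proof there either, being read off from Lemma \ref{lem:rho}, Lemma \ref{lem:tau}, Lemma \ref{lem:footrule}, Lemma \ref{lem:gamma} and the explicit evaluations of $\underline{C}^{(a,b)}_{c}(\tfrac12,\tfrac12)$ and $\overline{C}^{(a,b)}_{c}(\tfrac12,\tfrac12)$ in Section \ref{sec:beta}, and your bookkeeping reproduces exactly the attainment statements those results contain. So the claims that every maximum is attained at $R(m,2m)$ and every minimum at $T(m,1-m)$, and the table entries, do follow as you say.

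The one step that would fail is your claimed strictness. You assert that for $\phi$, $\gamma$, $\beta$ the objective is \emph{strictly} monotone along the candidate edge, so the extremum is ``attained only at a vertex,'' and you propose to verify this ``from the monotonicity estimates already recorded.'' Those estimates are only non-strict: in \eqref{cQ(M,C1)} one has $\cQ(M,\underline{C}^{(a,b)}_{m})\equiv 0$ on the whole piece $b\geqslant m+\tfrac12$ (and this quantity does not depend on $a$ at all), so for $m<\tfrac14$ the minimum $\phi=-\tfrac12$ is attained on an entire two-dimensional subregion of $\Delta_m$, not only at $T$; likewise the first case of \eqref{gam1} is the constant $1-8m^2$ for $b\leqslant\tfrac12-m$, so for $m<\tfrac16$ the maximum of $\gamma(\overline{C}^{(a,b)}_{m})$ is attained along a nondegenerate part of $RU$, and the analogous constancy occurs for $\beta$ (e.g.\ $\beta(\underline{C}^{(a,b)}_{m})=-1$ on a region when $m<\tfrac14$). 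The paper's own lemma proofs are careful to say only ``decreasing, since the first part is constant,'' and their statements (and the table) record points or segments at which the extrema are attained, not uniqueness of the extremizers. Since the corollary as stated only asserts attainment, your conclusion stands once you drop the strictness claim; but as written, the justification you give for the ``segment versus point'' distinction in the table is incorrect and could not be verified from the recorded estimates.
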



Observe that the minimizing and maximizing segments in the above table are reflected about the line $b=\frac{m+1}{2}$, the symmetry line of the $\Delta_m$ triangle.

We also want to point out another fact for measures of concordance from the set  $\{\rho, \phi, \gamma, \beta\}$~: A high value of asymmetry, i.e., a value of $\mu_{\infty}(C)$ from an interval $(\delta_{\kappa},\frac13]$, implies negative value of $\kappa(C)$ for $\kappa$ in this set. Here $\delta_{\kappa}\ge\frac14$ is a number depending on $\kappa$. In short, high asymmetry implies negative measure of concordance for all measures of concordance we consider except for Kendall's tau. {However, it is not surprising that Kendall's tau behaves differently compared to the other measures of concordance. Namely, $\rho, \phi, \gamma$ and $\beta$ are linear with respect to convex combinations, while $\tau$ is quadratic \cite{EdTa}. }

\medskip

This paper presents the relations between asymmetry and measures of concordance which are nicely seen on our Figures \ref{fig-rho}, \ref{fig-tau}, \ref{fig-phi}, \ref{fig-gamma}, and \ref{fig-beta}.
This study may serve as an encouragement to apply the method proposed here also to investigate other relations between these and other properties and measures of dependence structure of copulas in order to help practitioners to choose their copula models in accordance with their data.
\vskip 12pt

\noindent\textbf{Acknowledgement.} The authors are thankful to Fabrizio Durante for proposing the study of relation between concordance and asymmetry. {Our special thanks are due to the referees. The suggestions of one of them helped us to greatly improve the paper.} The figures in the paper were drawn using the Mathematica software \cite{math}.

\end{document}